\theoremstyle{thmstyleone}
\newtheorem{theorem}{Theorem}
\newtheorem{proposition}[theorem]{Proposition}
\newtheorem{lemma}[theorem]{Lemma}
\newtheorem{corollary}[theorem]{Corollary}
\theoremstyle{thmstyletwo}
\newtheorem{remark}{Remark}
\theoremstyle{thmstylethree}
\newtheorem{definition}{Definition}
\newcommand{\Rbb}{\mathds{R}}
\newcommand{\Sbb}{\mathds{S}}
\newcommand{\Zbb}{\mathds{Z}}
\DeclareMathOperator{\dist}{\mathrm{dist}}
\DeclareMathOperator{\Hom}{\mathrm{Hom}}
\DeclareMathOperator{\im}{\mathrm{Im}}
\newtheorem{introthm}{Theorem}[section]
\newtheorem{introprp}[introthm]{Proposition}
\newtheorem*{acknowledgements}{Acknowledgements}
\numberwithin{equation}{section}
\numberwithin{figure}{section}
\begin{document}

\title[Zoll manifolds with boundary]{Zoll manifolds with boundary}

\author[E. Longa, P. Piccione, and R. Santos]{Eduardo Longa, Paolo Piccione, and Roney Santos}

\address[E. Longa]{
	Institute of Mathematics, Statistics and Computer Science
	\newline\indent
	University of S\~ao Paulo
	\newline\indent 
	05508-090, S\~ao Paulo-SP, Brazil}
\email{\href{mailto:edulonga@ime.usp.br}{edulonga@ime.usp.br}}

\address[P. Piccione]{
	Department of Mathematics, 
	School of Sciences\newline\indent Great Bay University
	\newline\indent 
	523000, Dongguan-GD, People’s Republic of China
	\newline\indent 
	and
	\newline\indent
	(Permanent address) Institute of Mathematics, Statistics and Computer Science
	\newline\indent
	University of S\~ao Paulo
	\newline\indent 
	05508-090, S\~ao Paulo-SP, Brazil}
\email{\href{mailto:piccione@ime.usp.br}{paolo.piccione@usp.br}}

\address[R. Santos]{Department of Mathematics
\newline\indent King's College London
\newline\indent Strand, London WC2R 2LS, United Kingdom.
\newline\indent 
and
\newline\indent
Institute of Mathematics, Statistics and Computer Science
\newline\indent
University of S\~ao Paulo
\newline\indent 
05508-090, S\~ao Paulo-SP, Brazil}
\email{\href{mailto:roney.santos@kcl.ac.uk}{roney.santos@kcl.ac.uk}}

\begin{abstract}
We introduce and study Zoll manifolds with boundary: compact Riemannian manifolds with smooth boundary such that every geodesic issuing orthogonally from the boundary returns orthogonally and is nowhere tangent to it. We first show that all such free boundary geodesics are embedded and have a common length, and that the boundary has at most two connected components. If there are two components, we prove that the manifold is a product of an interval with a closed manifold. When the boundary is connected, we show that the manifold is a tubular neighborhood of a closed embedded submanifold, the ``soul'', and that the complement of the soul is diffeomorphic to a half-open cylinder over the boundary. We further prove that all free boundary geodesics are maximally degenerate critical points of the energy functional and have the same Morse index, which equals the multiplicity of the unique focal point occurring at the midpoint of each geodesic. The projection from the boundary to the soul is then either a nontrivial two-fold covering or a smooth sphere bundle, according to the value of this index. As applications, we obtain a complete classification of Zoll surfaces with boundary and of three-dimensional Zoll manifolds with boundary.
\end{abstract}

\keywords{Zoll manifolds, free boundary geodesics, Morse index, focal points, sphere bundle, topological classification.}

\subjclass[2020]{53C22, 58E10}

\maketitle

\section{Introduction}
Closed Riemannian manifolds all of whose geodesics are closed, often called $C$-manifolds, have been thoroughly investigated in the literature, see the celebrated book \cite{Besse}. Within this class, a special role is played by manifolds all of whose closed geodesics have the same minimal period, called Zoll manifolds.

The first nontrivial example of such manifolds was found in 1903 by Otto Zoll \cite{zoll1903uber}, a student of David Hilbert, who was looking for examples of surfaces, other than the round sphere, with the property that all geodesics are closed with the same length, and discovered spheres of revolution with the desired property.

Subsequently, Paul Funk \cite{funk} gave a sufficient condition for the existence of a deformation of the round metric on the $2$-sphere through Zoll metrics: if $e^{\rho_t} g_0$ is such a deformation, then $\dot{\rho}$ must be an odd function on the sphere. This condition was later shown to be sufficient in general by Victor Guillemin \cite{guillemin}.

An old question in the field is whether the assumption that all geodesics have the same length can be removed. In other words, one asks whether every $C$-manifold must necessarily be Zoll. However, as observed by John Olsen \cite{olsen}, lens spaces equipped with their canonical metrics provide examples of $C$-manifolds that are not Zoll. A conjecture often attributed to Marcel Berger asserts that every simply connected $C$-manifold is Zoll, a statement that has indeed been confirmed for spheres of all dimensions other than three \cite{gromollgrove, radeschiwilkingsn}.

As a natural extension of the notion of $C$-manifold, in this paper we study compact $n$-dimensional Riemannian manifolds $(M,g)$ with boundary $\partial M$, $n\geq2$, having the property that from every point of $\partial M$ there issues a free boundary geodesic that returns orthogonally to the boundary and is never tangent to it. Under this assumption, a first result we obtain is that the length of all free boundary geodesics is a fixed positive number. Thus, the free boundary analogues of closed $C$-manifolds and Zoll manifolds are equivalent, which means that the Berger conjecture is, in fact, true in this setting. For this reason, we call these manifolds \emph{Zoll manifolds with boundary}.

Our analysis shows that Zoll manifolds with boundary have a more rigid geometric structure than $C$-manifolds, as illustrated by the following paradigmatic example. Consider a closed manifold $\Sigma$ embedded in a Riemannian manifold $N$, with codimension $k+1$, $k\ge0$. For $d>0$, denote by $\nu_d(\Sigma)$ the radius $d$ normal disk bundle of $\Sigma$ in $N$. For sufficiently small $L>0$, the normal exponential map along $\Sigma$ carries $\nu_L(\Sigma)$ to a tubular neighborhood $M$ of $\Sigma$, having smooth boundary. In this situation, every inward-pointing geodesic segment of length $2L$ issuing orthogonally from a point of $\partial M$ remains inside $M$ and hits the boundary orthogonally at the final endpoint. Note that if $k>0$, then $\partial M$ is connected, while when $k=0$, $\partial M$ has at most two connected components.

Let us now consider a general Zoll manifold with boundary $(M,g)$.
First, we show that the boundary of $M$ can have at most two connected components, the most interesting case occurring when $\partial M$ is connected. We show that, as in the example above, $M$ can be described as a tubular neighborhood of an embedded closed submanifold of codimension at least $1$, called the \emph{soul} of $M$. 

\begin{introthm}\label{thm:maingeomstruct}
	Let $(M,g)$ be a Zoll manifold with boundary. Then: 
	\begin{itemize}
		\item all of its free boundary geodesics are embedded and have the same length $2L$;
		\item $\partial M$ has at most two connected components.
	\end{itemize} 
	Moreover:
	\begin{itemize}
		\item[$\mathrm{(i)}$] if $\partial M$ is connected, then there exists a smooth closed embedded submanifold $\Sigma_L$  of $M$ (the \emph{soul} of $M$) of codimension at least $1$, such that $M\setminus\Sigma_L$ is foliated by hypersurfaces $\Sigma_t$ equidistant from $\Sigma_L$, $t\in\left[0,L\right[$, each of which is diffeomorphic to $\partial M$. Here, $\Sigma_0$ coincides with $\partial M$, $\mathrm{dist}(\Sigma_t,\Sigma_L)=L-t$, and the metric $g$ on $M\setminus\Sigma_L\cong\left[0,L\right[\times\partial M$ has the form $\mathrm dt^2\oplus g_t$, where $\left[0,L\right[\ni t\mapsto g_t$ is a smooth one-parameter family of metrics on $\partial M$.
		\item[$\mathrm{(ii)}$] If $\partial M$ has two connected components, then the two components are both diffeomorphic to a closed manifold $\Sigma$ and $M$ is diffeomorphic to the product $[0,2L]\times\Sigma$, with metric $g\simeq \mathrm dt^2\oplus g_t$, where $\left[0,2L\right]\ni t\mapsto g_t$ is a smooth one-parameter family of metrics on $\Sigma$.
	\end{itemize}
\end{introthm}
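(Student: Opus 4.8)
The plan is to reduce all of the conclusions to one analytic statement about the distance function $\rho:=\operatorname{dist}(\cdot,\partial M)$: that every free boundary geodesic realizes $\rho$ exactly up to its midpoint. First I would fix notation: for $p\in\partial M$ let $\nu_p$ be the inward unit normal, $\gamma_p$ the geodesic with $\gamma_p(0)=p$, $\gamma_p'(0)=\nu_p$, and $\ell(p)$ its first return time to $\partial M$ (finite by hypothesis, with $\gamma_p'(\ell(p))$ orthogonal to $\partial M$ and pointing outward). Orthogonal return is transverse, so by the implicit function theorem $\ell$ is smooth, and by compactness bounded away from $0$. To see $\ell$ is locally constant I would take a curve $s\mapsto p_s$ in $\partial M$, form the variation through free boundary geodesics $\Gamma(s,t)=\gamma_{p_s}(t)$, and differentiate $E(s):=\tfrac12\int_0^{\ell(p_s)}|\partial_t\Gamma|^2\,\mathrm dt=\tfrac12\ell(p_s)$. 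The first–variation formula gives $E'(s)=\tfrac12\tfrac{d}{ds}\ell(p_s)+\langle\partial_s\Gamma,\partial_t\Gamma\rangle\big|_{t=0}^{t=\ell(p_s)}$; the term at $t=0$ vanishes since $\partial_s\Gamma(s,0)\in T\partial M$ and $\partial_t\Gamma(s,0)=\nu_{p_s}$ is normal, while the term at $t=\ell(p_s)$ equals $-\tfrac{d}{ds}\ell(p_s)$ because $s\mapsto\Gamma(s,\ell(p_s))$ is a curve in $\partial M$ whereas $\partial_t\Gamma(s,\ell(p_s))$ is normal. Hence $E'(s)=-\tfrac12\tfrac{d}{ds}\ell(p_s)$, which against $E'(s)=\tfrac12\tfrac{d}{ds}\ell(p_s)$ forces $\tfrac{d}{ds}\ell(p_s)=0$. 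So $\ell$ is constant on each component of $\partial M$; write $2L$ for its value (equality of the two values, when $\partial M$ is disconnected, will fall out of the last step).

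The main obstacle is to prove that $\gamma_p|_{[0,L]}$ is minimizing from $\partial M$ for every $p$, equivalently that the cut distance of $\partial M$ along each $\gamma_p$ is $L$, equivalently $\max_M\rho=L$ and $\operatorname{Cut}(\partial M)=\rho^{-1}(L)$. One inequality is soft: any minimizing geodesic from $\partial M$ to a point $x$ meets $\partial M$ orthogonally, hence is an initial arc of some $\gamma_{p'}$, so $x=\gamma_{p'}(\rho(x))$ with $\rho(x)\le 2L$, and traversing $\gamma_{p'}$ from $x$ to $\gamma_{p'}(2L)\in\partial M$ yields a path of length $2L-\rho(x)$ from $x$ to $\partial M$, whence $\rho(x)\le L$. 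For the reverse inequality I would argue by contradiction from a cut point $y_0=\gamma_{p_0}(a_0)$ with $a_0<L$. The one case that the Zoll hypothesis settles immediately: if there is a second minimizing arc $\gamma_{p_1}|_{[0,a_0]}$ from $\partial M$ to $y_0$ arriving with velocity $-\gamma_{p_0}'(a_0)$, then $\gamma_{p_0}|_{[0,a_0]}$ followed by $\gamma_{p_1}|_{[0,a_0]}$ reversed is a smooth geodesic, orthogonal to $\partial M$ at both ends, issuing from $p_0$ with velocity $\nu_{p_0}$ — hence an initial piece of $\gamma_{p_0}$ — and of length $2a_0<2L$, so $\gamma_{p_0}$ would meet $\partial M$ at an interior parameter, contradicting $\gamma_{p_0}((0,2L))\subset M^\circ$. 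The remaining possibilities — a second minimizer arriving non-oppositely, or $y_0$ a focal point of $\partial M$ — are where I expect the bulk of the work to lie: the plan is to use that a $\partial M$–Jacobi field vanishing at an interior parameter extends (by differentiating the return map) to a Jacobi field that is a $\partial M$–Jacobi field at \emph{both} endpoints, feed this into the index form of $\gamma_{p_0}|_{[0,2L]}$, and combine with the facts that $\rho$ attains its maximum at a cut point and that (by the clean case above) the incoming minimizing directions at a cut point below level $L$ contain no antipodal pair, so they cannot positively span a halfspace — impossible at a maximum. This is the crux.

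Granting $\operatorname{Cut}(\partial M)=\rho^{-1}(L)=:\Sigma_L$ and $\max\rho=L$, the geometric structure follows by standard normal–coordinate arguments. For $t<L$ the point $\gamma_p(t)$ lies strictly inside the cut locus, hence is joined to $\partial M$ by the unique minimizing geodesic $\gamma_p|_{[0,t]}$ and is non-focal; since moreover the values of $\partial M$–Jacobi fields along $\gamma_p$ are always orthogonal to $\gamma_p'$, the map $(p,t)\mapsto\gamma_p(t)$ is an injective immersion of $\partial M\times[0,L)$, hence a diffeomorphism onto $M\setminus\Sigma_L$ carrying $\rho$ to $t$ and $g$ to $\mathrm dt^2\oplus g_t$, with level sets $\Sigma_t=\rho^{-1}(t)\cong\partial M$ — the equidistant foliation. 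The same uniqueness shows each $\gamma_p$ is injective on $[0,2L)$ (a self–intersection among parameters below $L$ would give two minimizing arcs to a point of level $<L$), and after checking that the involution $\Phi(p):=\gamma_p(2L)$ is fixed–point–free one gets that every free boundary geodesic is embedded. Each $\gamma_p$ crosses $\Sigma_L$ exactly once, transversally, at its midpoint; the midpoint map $m(p)=\gamma_p(L)$ satisfies $m\circ\Phi=m$, is a submersion onto $\Sigma_L$, and exhibits $M\setminus\Sigma_L$ as the disjoint union of the tubes over the components of $\partial M$, glued along $\Sigma_L$. Connectedness of $M$ then bounds the number of components: if $\operatorname{codim}\Sigma_L\ge2$ the complement of $\Sigma_L$ stays connected near $\Sigma_L$, so there is a single tube and $M$ is the mapping cylinder of the sphere bundle $m\colon\partial M\to\Sigma_L$, giving $\mathrm{(i)}$; if $\operatorname{codim}\Sigma_L=1$ then $\Sigma_L$ is a closed hypersurface which either fails to separate (again one component, the Möbius–type case of $\mathrm{(i)}$) or separates $M$ into two tubes, in which case $\Phi$ swaps the two boundary components (so their $\ell$–values coincide, $=2L$), $m$ restricts to a diffeomorphism on each, and $M\cong[0,2L]\times\Sigma_0$ with metric $\mathrm dt^2\oplus g_t$, giving $\mathrm{(ii)}$. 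The last technical point is that $\Sigma_L$ is a smooth embedded submanifold and $t\mapsto g_t$ extends smoothly up to $t=L$, which I would obtain from constancy of the focal order at midpoints (forced by the involution $\Phi$) together with the smoothness of $g$ across $\Sigma_L$ itself.
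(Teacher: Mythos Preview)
Your argument for the local constancy of $\ell$ is essentially the paper's (both exploit orthogonality at the endpoints), and your soft inequality $\rho\le L$ matches the paper's Lemma~4.4. The substantive divergence is at what you correctly flag as the crux: ruling out cut points of $\partial M$ at parameter $a_0<L$. Your antipodal case is fine, but the plan for the remaining cases does not work as stated. The claim that at a maximum of $\rho$ the incoming minimizing directions must contain an antipodal pair is false (three vectors at $120^\circ$ in a plane already positively span without any antipodal pair), and more basically, an arbitrary cut point at level $a_0<L$ need not be a maximum of $\rho$ at all, so the ``impossible at a maximum'' step has no purchase. Nor have you shown $\max\rho=L$; the soft argument only gives $\le$. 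The Jacobi-field/index-form sketch for the focal case is too vague to evaluate, and in fact the paper needs its full Morse-index machinery (Propositions~3.1 and~3.3: maximal degeneracy, hence constant Morse index, hence constant focal multiplicity at the midpoint) to get constant rank of $\mathcal F_L$ and thus smoothness of $\Sigma_L$ --- this is not something that falls out of the involution $\Phi$ alone.

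The paper sidesteps the cut-point analysis entirely by reversing your order of operations. It first bounds the number of boundary components by the one-line observation that a length-minimizer between two components is a free boundary geodesic, so all free boundary geodesics from a component $C$ land in a single component $C'$, leaving no room for a third (Proposition~2.3). With two components, every free boundary geodesic is then a minimizer between them, so there are no focal points and the product structure is immediate. With one component, the key is the elementary symmetry $\Sigma_t=\Sigma_{2L-t}$ (from $\gamma_p(t)=\gamma_{\mathcal I(p)}(2L-t)$): a minimizer from $\partial M$ to $\Sigma_t$ is an initial arc of some $\gamma_p$, and the symmetry forces its length to be $\min(t,2L-t)=t$ for $t\le L$ (Lemma~4.4). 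This gives $\rho(\gamma_p(t))=t$ on $[0,L]$ directly, without ever confronting a hypothetical early cut point. Your route could perhaps be salvaged, but as written the crux is a genuine gap, and the paper's decomposition (components first, then symmetry) is both simpler and complete.
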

As we have seen in the tubular neighborhood example above, any closed Riemannian manifold can be realized as the soul of a Zoll manifold with connected boundary in sufficiently high codimension. A certain cohomological condition is required if one requires codimension $1$ (Proposition~\ref{thm:soulcodim1connboundary}).

A description of the submanifold $\Sigma_L$ can be given in terms of focal points of the boundary, and this requires an investigation of the Morse index of the free boundary geodesics.

In the closed setting, Bott--Samelson's theorem \cite{Besse, Bott, Samelson} asserts that any two geodesics in a closed Zoll manifold have the same Morse index, and this fixed number directly influences the topology of the manifold. In the free boundary setting, we show that an analogous result holds (with a proof that also covers the case of closed Zoll manifolds, see Remark~\ref{rem:maxdegen}). 

\begin{introthm}\label{thm:sameMorseindex1}
	Any two free boundary geodesics of an $n$-dimensional Zoll manifold $(M,g)$ with boundary have the same Morse index. More specifically, given a unit-speed free boundary geodesic $\gamma\colon[0,2L]\to M$ in $M$, its Morse index is equal to the multiplicity of $\gamma(L)$ as a focal point of the boundary.
	When $\partial M$ has two boundary components, this number is zero.
\end{introthm}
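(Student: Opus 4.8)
The plan is to obtain the Morse index of a free boundary geodesic $\gamma\colon[0,2L]\to M$ from the Morse index theorem for geodesics whose endpoints are free to vary on submanifolds. Viewing $\gamma$ as a geodesic running between the boundary component containing $\gamma(0)$ and the one containing $\gamma(2L)$, and having first checked that $\gamma(2L)$ is not a focal point of $\partial M$ along $\gamma$, that theorem expresses $\mathrm{ind}(\gamma)$ as $\sum_{t\in\left]0,2L\right[}\mu_\gamma(t)+\mathrm{ind}(Q)$, where $\mu_\gamma(t)$ is the multiplicity of $\gamma(t)$ as a focal point of $\partial M$ and $Q$ is a boundary concavity form defined on the space of $\partial M$-Jacobi fields along $\gamma$. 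Thus the proof reduces to (a) locating the focal points of $\partial M$ along $\gamma$, and (b) proving $\mathrm{ind}(Q)=0$; the same two steps will cover the two-component case, where it will turn out that there are no focal points at all and hence the index is $0$.

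For step (a) I would use Theorem~\ref{thm:maingeomstruct}. Suppose $\partial M$ is connected. On $\left[0,L\right]$ the geodesic $\gamma$ realizes the distance to $\partial M$: under the identification $M\setminus\Sigma_L\cong\left[0,L\right[\times\partial M$ with metric $\mathrm dt^2\oplus g_t$, the coordinate $t$ is the distance function to $\partial M$ and $\gamma$ is one of its integral curves, so $\gamma$ has no focal points of $\partial M$ in $\left]0,L\right[$. Next, writing $\gamma_p$ for the free boundary geodesic issuing from $p\in\partial M$ and $\Phi_t\colon\partial M\to M$, $\Phi_t(p)=\gamma_p(t)$, the map $\sigma=\Phi_{2L}\colon\partial M\to\partial M$ is a smooth involution — reversing a free boundary geodesic produces a free boundary geodesic — hence a diffeomorphism. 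Since $\Phi_t=\Phi_{2L-t}\circ\sigma$, and since for $2L-t\in\left]0,L\right[$ the map $\Phi_{2L-t}$ is a diffeomorphism onto $\Sigma_{2L-t}$ by Theorem~\ref{thm:maingeomstruct} while $\Phi_0=\mathrm{id}$, the differential $\mathrm d\Phi_t$ is injective for all $t\in\left]L,2L\right]$; in particular $\gamma(2L)$ is not a focal point of $\partial M$ and $\gamma$ has no focal points in $\left]L,2L\right[$. Finally, near $\Sigma_L$ Theorem~\ref{thm:maingeomstruct} realizes $M$ as a tubular neighborhood of $\Sigma_L$, so $\Phi_L\colon\partial M\to\Sigma_L$ is the projection of the unit normal sphere bundle of $\Sigma_L$, a submersion with fiber $\Sbb^{c-1}$, $c$ being the codimension of $\Sigma_L$. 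A $\partial M$-Jacobi field along $\gamma$ is precisely the variation field of a family of free boundary geodesics obtained by moving the starting point inside $\partial M$, so such a field $J$ satisfies $J(L)=\mathrm d(\Phi_L)_{\gamma(0)}\!\big(J(0)\big)$; consequently the $\partial M$-Jacobi fields vanishing at $L$ are identified with $\ker\mathrm d(\Phi_L)_{\gamma(0)}$, of dimension $\dim\partial M-\dim\Sigma_L=c-1$. Hence $\mu_\gamma(L)=c-1$ and this is the only focal point in $\left]0,2L\right[$. When $\partial M$ has two components, $M\cong[0,2L]\times\Sigma_0$ with metric $\mathrm dt^2\oplus g_t$ and $\gamma$ realizes the distance to the component containing $\gamma(0)$ on all of $\left[0,2L\right]$, so $\gamma$ has no focal points of $\partial M$ in $\left]0,2L\right]$; in particular $\mu_\gamma(L)=0$.

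For step (b), the key observation is that every $\partial M$-Jacobi field $J$ along $\gamma$ is the variation field of a smooth family of free boundary geodesics, each of which also meets $\partial M$ orthogonally at $t=2L$; consequently $J$ automatically satisfies the free-boundary Jacobi condition at $t=2L$ on top of the one at $t=0$. Since $Q$ is assembled precisely from the endpoint data entering these boundary conditions, it vanishes identically on its domain — equivalently, every such $J$ lies in the kernel of the index form of $\gamma$, so the nullity of $\gamma$ is at least $\dim\partial M$ — and therefore $\mathrm{ind}(Q)=0$. Combining with step (a), $\mathrm{ind}(\gamma)=\mu_\gamma(L)$, equal to $\dim\partial M-\dim\Sigma_L$ when $\partial M$ is connected and to $0$ when $\partial M$ has two components; in either case this number is independent of $\gamma$, which gives the first assertion. (The same scheme, with $\partial M$-Jacobi fields replaced by Jacobi fields vanishing at $\gamma(0)$, which in a closed Zoll manifold also vanish at $\gamma(2L)$, yields the closed case alluded to in Remark~\ref{rem:maxdegen}.)

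The hard part is twofold. Analytically, one must pick the correct version of the Morse index theorem for geodesics with two variable endpoints on submanifolds and match sign conventions so that $Q$ is exactly the bilinear form that vanishes on Jacobi fields satisfying both endpoint conditions; a sign slip would instead produce the index of (plus or minus) the second fundamental form of $\partial M$ at $\gamma(2L)$, which is generally nonzero. Geometrically, one must extract from Theorem~\ref{thm:maingeomstruct} that $\Phi_L$ is a genuine fiber bundle and that the return map $\sigma$ is a diffeomorphism, since this is what fixes the focal multiplicity at $\gamma(L)$ and excludes focal points elsewhere along $\gamma$.
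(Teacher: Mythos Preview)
Your computation that $\mathrm{ind}(\gamma)=\mu_\gamma(L)$ is correct and is essentially the paper's argument (Proposition~\ref{thm:maxdeg} together with Lemma~\ref{focalset}): the free boundary geodesics are minimizing on $[0,L]$ and, via the involution $\sigma=\Phi_{2L}$, also on $[L,2L]$, so the only possible focal instant is $t=L$; and your step~(b) is exactly the maximal-degeneracy statement of Proposition~\ref{thm:maxdeg}, which forces the boundary form $\mathcal A_\gamma$ (your $Q$) to vanish. The two-component case is likewise handled the same way as in the paper (Corollary~\ref{thm:minimizer}).

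The gap is in how you obtain \emph{constancy} of $\mu_\gamma(L)$. You invoke the part of Theorem~\ref{thm:maingeomstruct} asserting that $\Sigma_L$ is a smooth submanifold of fixed codimension $c$, so that $\mu_\gamma(L)=\dim\ker\mathrm d(\Phi_L)_{\gamma(0)}=c-1$ is independent of $\gamma$. But in the paper that statement is Proposition~\ref{SigmaL}, whose proof applies the constant-rank theorem to $\Phi_L$; and the constant rank of $\Phi_L$ is deduced precisely from the fact that all free boundary geodesics have the same index (Proposition~\ref{thm:sameMorseindex}). So your argument is circular: you are using the smoothness of $\Sigma_L$ to prove the index is constant, while the paper uses the constancy of the index to prove $\Sigma_L$ is smooth.

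The paper closes this loop differently. Your step~(b) already shows that every free boundary geodesic is maximally degenerate ($\dim\ker I_\gamma=n-1$). Since any two free boundary geodesics are joined by a continuous path of free boundary geodesics (move the initial point along $\partial M$), and since along a path of critical points with constant nullity the Morse index cannot jump, the index is constant. This is the content of Proposition~\ref{thm:sameMorseindex}; once you have it, the smoothness of $\Sigma_L$ and the fiber-bundle structure of $\Phi_L$ follow, not the other way around.
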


The soul $\Sigma_L$ of a Zoll manifold with connected boundary, described in Theorem~\ref{thm:maingeomstruct}, is the set of midpoints of all free boundary geodesics. When $k>0$, $\Sigma_L$ can also be described as the focal set of $\partial M$, and $k$ is precisely the multiplicity of each focal point of $\partial M$. It is a smooth closed connected embedded submanifold of codimension $k+1$, where $k$ is the index of the free boundary geodesics. 

Unlike the closed case, where the Morse index is highly restricted, here the Morse index may take any value in $\{0,\ldots,n-1\}$, where $n$ is the dimension of the manifold:
\begin{introprp}\label{thm:anynk}
	For every $n\ge2$ and any $k\in\{0,\ldots,n-1\}$ there exists an
	$n$-dimensional Zoll manifold with connected boundary whose free boundary
	geodesics have Morse index equal to $k$.
\end{introprp}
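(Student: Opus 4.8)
The plan is to realize each index by an explicit \emph{flat} model: a closed disk subbundle of a real rank-$(k+1)$ vector bundle over a closed manifold, carrying a product-type metric, which one checks directly --- exactly as in the tubular neighborhood example of the introduction --- to be a Zoll manifold with boundary. By Theorem~\ref{thm:sameMorseindex1}, the Morse index of its free boundary geodesics equals the multiplicity of the midpoint $\gamma(L)$ as a focal point of $\partial M$, so the task reduces to producing, for each $n\ge2$ and $k\in\{0,\dots,n-1\}$, a codimension-$(k+1)$ model with \emph{connected} boundary for which this multiplicity is exactly $k$. The argument splits into the case $k\ge1$, where the sphere fiber $\Sbb^k$ of $\partial M$ is itself connected and the trivial bundle suffices, and the case $k=0$, where connectedness of $\partial M$ forces a twisted $I$-bundle and the cohomological condition of Proposition~\ref{thm:soulcodim1connboundary} enters.

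For $k\ge1$, let $\Sigma$ be any closed connected manifold of dimension $n-k-1$ (a point if this is zero, otherwise say $\Sbb^{n-k-1}$), fix a metric on it, and set $M=\Sigma\times\overline B$, the product of $\Sigma$ with the closed Euclidean ball $\overline B\subset\Rbb^{k+1}$ of radius $L$, with the product metric. Then $\partial M=\Sigma\times\Sbb^k_L$ is connected; the geodesics of $M$ meeting $\partial M$ orthogonally are precisely the curves $t\mapsto(x,(L-t)v)$, $x\in\Sigma$, $v\in\Sbb^k$, $t\in[0,2L]$, each of which returns orthogonally to $\partial M$ at $(x,-Lv)$, so $M$ is a Zoll manifold with boundary whose free boundary geodesics have length $2L$. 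Decomposing a Jacobi field along such a $\gamma$ into its $T\Sigma$- and $\Rbb^{k+1}$-parts, the $T\Sigma$-part satisfying the free boundary condition at $t=0$ is parallel --- since the slice $\Sigma\times\{Lv\}$ is totally geodesic in $M$ --- hence nowhere zero, while on the ball factor the fields obeying the free boundary condition and vanishing in $(0,2L)$ are exactly the rotational fields $t\mapsto(L-t)E$ with $E\perp v$, a $k$-dimensional family all vanishing at the center $t=L$. Thus $\gamma(L)$ is a focal point of $\partial M$ of multiplicity $k$, and Theorem~\ref{thm:sameMorseindex1} gives Morse index $k$.

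For $k=0$, take a closed connected $(n-1)$-manifold $\Sigma$ admitting a connected double cover --- equivalently $H^1(\Sigma;\Zbb_2)\ne0$, which always holds for $\Sigma=\RPbb^{n-1}$ --- let $\widetilde\Sigma\to\Sigma$ be such a cover with deck involution $\sigma$, fix a $\sigma$-invariant metric $h$ on $\widetilde\Sigma$, and set $M=(\widetilde\Sigma\times[-L,L])/\Zbb_2$, where the generator acts by $(p,t)\mapsto(\sigma p,-t)$, with the metric descended from $\mathrm dt^2\oplus h$. The action is free (since $\sigma$ is), so $M$ is a smooth compact manifold with boundary, and because $(p,L)\sim(\sigma p,-L)$ the boundary is the image of $\widetilde\Sigma\times\{L\}$, hence diffeomorphic to $\widetilde\Sigma$ and connected. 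A geodesic of $M$ orthogonal to $\partial M$ lifts to $t\mapsto(q,-L+t)$, $t\in[0,2L]$, reaching $\widetilde\Sigma\times\{L\}$ orthogonally, so $M$ is a Zoll manifold with boundary with free boundary geodesics of length $2L$. The slices $\widetilde\Sigma\times\{\pm L\}$ are totally geodesic in the product, so the $\partial M$-Jacobi fields along such a geodesic are parallel and never vanish; hence $\gamma(L)$ is not a focal point of $\partial M$ and Theorem~\ref{thm:sameMorseindex1} gives Morse index $0$. (Equivalently, $M$ is a tubular neighborhood of the one-sided hypersurface $\Sigma_L=(\widetilde\Sigma\times\{0\})/\Zbb_2\cong\Sigma$ of codimension $1$, so the index is read off from Theorem~\ref{thm:maingeomstruct}.)

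The only real obstacle is the case $k=0$: a connected boundary rules out the product $I$-bundle and forces the normal bundle of the soul to be nontrivial, which is possible exactly under the hypothesis of Proposition~\ref{thm:soulcodim1connboundary}, met for all $n\ge2$ by $\Sigma=\RPbb^{n-1}$. Everything else is routine: that the models are Zoll manifolds with boundary follows as for the tubular neighborhood example, and the index computation is, via Theorem~\ref{thm:sameMorseindex1}, an elementary Jacobi field calculation in a product metric.
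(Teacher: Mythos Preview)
Your proof is correct, and it takes a genuinely different route from the paper's. The paper proceeds by induction on the dimension via the \emph{mapping torus} construction: given an $n$-dimensional Zoll manifold with connected boundary of index $k$ and an isometry $\varphi$, the mapping torus $M_\varphi$ is an $(n+1)$-dimensional Zoll manifold with connected boundary of the same index $k$; the base case $n=2$ comes from the classification of Zoll surfaces (disk for $k=1$, M\"obius band for $k=0$), and the remaining case $k=n$ in dimension $n+1$ is handled by a geodesic ball. Your argument is instead a direct, non-inductive construction of flat models: for $k\ge1$ the product $\Sigma^{n-k-1}\times\overline B{}^{\,k+1}$, and for $k=0$ the twisted $I$-bundle over $\RPbb^{n-1}$, with the index read off via Theorem~\ref{thm:sameMorseindex1} from the obvious focal multiplicity of the fiber sphere at its center. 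Your approach is more elementary --- it avoids both the $2$-dimensional classification and the mapping torus machinery, and it makes explicit that the tubular neighborhood example of the introduction already realizes every pair $(n,k)$. The paper's approach, on the other hand, yields the mapping torus as a construction of independent interest (preserving both the Zoll property and the index while raising dimension by one), which your argument does not provide.
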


In the last part of the paper, we describe the topology of Zoll manifolds with connected boundary.  

Inspired by Bott--Samelson's theorem, we show the following about the homology of a Zoll manifold with boundary.

\begin{introthm}\label{thm:hombordo}
	Let $(M,g)$ be a Zoll manifold with connected boundary and index $k \geq 0$. Then the relative homology group $H_1(M,\partial M;\Zbb)$ is either trivial or isomorphic to $\Zbb_2$. Moreover, $H_1(M,\partial M;\Zbb)\cong \Zbb_2$ if and only if $k = 0$.
\end{introthm}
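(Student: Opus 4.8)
The plan is to read off the topology of the pair $(M,\partial M)$ from the structural results Theorem~\ref{thm:maingeomstruct} and Theorem~\ref{thm:sameMorseindex1}, and then to compute $H_1(M,\partial M;\Zbb)$ as a cokernel. By Theorem~\ref{thm:maingeomstruct}(i), $M$ is a tubular neighborhood of its soul $\Sigma_L$; since $M$ is connected and deformation retracts onto $\Sigma_L$, the soul is a connected closed submanifold, and, as recorded in the text, $\Sigma_L$ has codimension $k+1$ in $M$, where $k$ is the common Morse index of the free boundary geodesics (Theorem~\ref{thm:sameMorseindex1}). Hence the normal bundle $\nu\to\Sigma_L$ has rank $k+1$, and the tubular neighborhood theorem identifies $(M,\partial M)$ with the disk/sphere bundle pair $(D(\nu),S(\nu))$, whose fiber sphere is $S^k$; moreover the inclusion $\partial M\hookrightarrow M$ is homotopic to the composition of the bundle projection $\pi\colon S(\nu)\to\Sigma_L$ with the homotopy equivalence $\Sigma_L\hookrightarrow M$ (the zero section). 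Since $M$ and $\partial M$ are connected with $\partial M\neq\varnothing$, the map $H_0(\partial M;\Zbb)\to H_0(M;\Zbb)$ is an isomorphism, so in the long exact sequence of the pair the boundary map into $H_0(\partial M;\Zbb)$ vanishes, and therefore
\[
H_1(M,\partial M;\Zbb)\;\cong\;\operatorname{coker}\big(\pi_*\colon H_1(S(\nu);\Zbb)\longrightarrow H_1(\Sigma_L;\Zbb)\big).
\]

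It remains to compute this cokernel in the two cases. If $k\ge 1$, the fiber $S^k$ is connected, so the homotopy exact sequence $\pi_1(S(\nu))\to\pi_1(\Sigma_L)\to\pi_0(S^k)=\{\ast\}$ shows that $\pi_*$ is surjective on $\pi_1$, hence on $H_1=\pi_1^{\mathrm{ab}}$, and the cokernel is trivial. If $k=0$, then $\pi\colon S(\nu)\to\Sigma_L$ is a double cover, and it is \emph{connected} precisely because $\partial M=S(\nu)$ is (equivalently, $\nu$ is the nontrivial line bundle; the trivial one would make $\partial M$ disconnected). A connected double cover is classified by a surjection $\phi\colon\pi_1(\Sigma_L)\twoheadrightarrow\Zbb_2$ with $\ker\phi=\pi_*\bigl(\pi_1(S(\nu))\bigr)$; since $\Zbb_2$ is abelian we have $[\pi_1(\Sigma_L),\pi_1(\Sigma_L)]\subseteq\ker\phi$, so the image of $\pi_*$ on $H_1$ equals $\ker\phi\big/[\pi_1(\Sigma_L),\pi_1(\Sigma_L)]$, whence the cokernel is $\pi_1(\Sigma_L)/\ker\phi\cong\Zbb_2$. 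Combining the two cases gives both the dichotomy and the equivalence $H_1(M,\partial M;\Zbb)\cong\Zbb_2\iff k=0$.

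The step I expect to be the main obstacle is the first one: extracting from Theorem~\ref{thm:maingeomstruct}(i) — which provides the collar splitting $M\setminus\Sigma_L\cong[0,L[\times\partial M$ and the description of $\Sigma_L$ as the set of midpoints (and, when $k>0$, the focal set of $\partial M$) — the clean statement that $(M,\partial M)$ is diffeomorphic to the normal disk/sphere bundle pair of $\Sigma_L$ with fiber sphere of dimension $k$, and that $\partial M\hookrightarrow M$ represents the bundle projection up to homotopy. Once that is in place, the remaining steps are standard: the case $k\ge1$ amounts to the fact that the Thom space of a vector bundle of rank $\ge2$ is simply connected, and the case $k=0$ is the familiar cokernel computation for a connected double cover.
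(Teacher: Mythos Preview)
Your argument is correct. The identification you worry about is precisely what the paper establishes in Proposition~\ref{SigmaL} and Corollary~\ref{thm:fibresballs} (equivalently Theorem~\ref{thm:fiberbundle}), both proved \emph{before} Theorem~\ref{thm:hombordo}: the nearest-point map $\pi\colon M\to\Sigma_L$ is a closed $(k+1)$-ball bundle and $\mathcal{F}_L\colon\partial M\to\Sigma_L$ is its boundary $S^k$-bundle, so you may freely cite these instead of extracting everything from Theorem~\ref{thm:maingeomstruct}(i). With that input, your long-exact-sequence computation and the double-cover cokernel argument (using $[\pi_1(\Sigma_L),\pi_1(\Sigma_L)]\subseteq\ker\phi$) are clean and correct.

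Your route is genuinely different from the paper's. For the dichotomy ``trivial or $\Zbb_2$'', the paper does \emph{not} invoke the bundle structure: it shows that every class in $H_1(M,\partial M;\Zbb)$ is represented by a single free boundary geodesic $\gamma_p$ (via a length-minimization argument and a lemma that relative $1$-cycles can be represented by based loops), observes that all such geodesics are mutually homotopic rel~$\partial M$, and concludes $2[\gamma_p]=0$ because $\gamma_{p'}$ (with $p'=\gamma_p(2L)$) is $\gamma_p$ traversed backwards. For the implication $H_1\cong\Zbb_2\Rightarrow k=0$, the paper again argues variationally: minimizing length in the nontrivial class yields a free boundary geodesic of Morse index $0$. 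You instead prove the contrapositive $k\ge1\Rightarrow H_1=0$ via the homotopy sequence of the $S^k$-bundle. Only for $k=0\Rightarrow H_1\cong\Zbb_2$ do the two proofs coincide in spirit. The paper's approach has the virtue of being logically lighter for the first statement (it does not need the smoothness of $\Sigma_L$ or the bundle structure), at the cost of the somewhat delicate ``minimize within a relative homology class'' step; your approach is uniform and purely homotopy-theoretic, but leans on the full structural package of Section~\ref{projection}, which in any case is already available.
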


A topological classification of Zoll surfaces with boundary follows immediately from Theorem \ref{thm:hombordo}: if the boundary has two connected components, the surface is diffeomorphic to a cylinder; if the boundary is connected, then the surface is diffeomorphic either to a M\"obius band, when $k = 0$, whose soul is a circle, or to a disk, when $k = 1$, whose soul is a point.

Now consider the map 
\begin{equation*}
	\mathcal{F}_L\colon\partial M\rightarrow\Sigma_L,
\end{equation*}
which carries every point $p\in\partial M$ to the unique intersection of the free boundary geodesic through $p$ with the soul. As the following result shows, this map endows $\partial M$ with the structure of a sphere bundle.

\begin{introthm} \label{thm:fiberbundle}
	Let $(M,g)$ be an $n$-dimensional Zoll manifold with connected boundary whose free boundary geodesics all have Morse index equal to $k$. One of the following holds:
	\begin{itemize}
		\item[$\mathrm{(i)}$] either $k=0$ and $\mathcal{F}_L \colon \partial M \to \Sigma_L$ is a nontrivial $2$-fold covering map;
		\item[$\mathrm{(ii)}$] or $k>0$ and $\mathcal{F}_L \colon \partial M \to \Sigma_L$ is a smooth fiber bundle whose fibers are $k$-spheres.
	\end{itemize}
	In particular, $\Sigma_L$ cannot be a closed simply connected manifold when $k = 0$.
\end{introthm}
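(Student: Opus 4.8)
The plan is to realize $\partial M$, through the normal exponential map of the soul $\Sigma_L$, as the unit normal sphere bundle of $\Sigma_L$, and to recognize $\mathcal F_L$ as the projection of that bundle. I will use the structure of $\Sigma_L$ recorded above: it is a closed connected embedded submanifold of codimension $k+1$, it is the set of midpoints of the free boundary geodesics, and $M$ is a tubular neighborhood of it. The first step is to verify that the normal exponential map of $\Sigma_L$ restricts to a diffeomorphism $\mathcal E\colon\overline{\nu_L(\Sigma_L)}\to M$ from the closed radius-$L$ ball bundle, sending the zero section to $\Sigma_L$ and the boundary sphere bundle to $\partial M$. Indeed, the radial geodesics $s\mapsto\mathcal E(q,sv)$ (with $(q,v)$ a unit normal vector and $s\in[-L,L]$) leave $\Sigma_L$ orthogonally, and since $\Sigma_L$ is exactly the midpoint locus, they are precisely the free boundary geodesics reparametrized so the midpoint sits at $s=0$; hence they meet $\partial M$ orthogonally at $s=\pm L$, every free boundary geodesic is of this form, and $\mathcal E$ is onto $M$. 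It is injective because the free boundary geodesics foliate $M\setminus\Sigma_L$ (Theorem~\ref{thm:maingeomstruct}(i)) and are embedded, so each point lies on a unique free boundary geodesic and at a unique parameter; and it is a local diffeomorphism up to the boundary sphere bundle because, by the product form $\mathrm dt^2\oplus g_t$, the hypersurface $\partial M=\{t=0\}$ is reached without any focal point of $\Sigma_L$.

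Write $\mathrm S\nu(\Sigma_L)$ for the unit normal sphere bundle, $\pi\colon\mathrm S\nu(\Sigma_L)\to\Sigma_L$ for its projection, and $\Phi\colon\mathrm S\nu(\Sigma_L)\to\partial M$ for $\Phi(q,v)=\mathcal E(q,Lv)$, a diffeomorphism by the first step. The second step is to identify $\mathcal F_L\circ\Phi$ with $\pi$: given $(q,v)$, the free boundary geodesic through $p:=\Phi(q,v)$ is the radial geodesic determined by $(q,v)$, whose midpoint is $q$, so $\mathcal F_L(p)=q=\pi(q,v)$. Hence $\mathcal F_L=\pi\circ\Phi^{-1}$; in particular $\mathcal F_L$ is smooth and carries exactly the structure of $\pi$ transported along a diffeomorphism.

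Finally I read off the dichotomy from $\pi$. If $k\ge1$, each fiber of $\pi$ is the unit sphere $\Sbb^{k}$ of the Euclidean space $\nu_q(\Sigma_L)\cong\Rbb^{k+1}$, so $\pi$, and therefore $\mathcal F_L$, is a smooth fiber bundle with fiber $\Sbb^{k}$, which is case~(ii). If $k=0$, then $\pi$ is the $2$-fold covering of $\Sigma_L$ associated with the real line bundle $\nu(\Sigma_L)$; its total space $\mathrm S\nu(\Sigma_L)\cong\partial M$ is connected by hypothesis, so this covering is connected and hence nontrivial, and $\mathcal F_L$ is a nontrivial $2$-fold covering map, which is case~(i). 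Since a manifold admitting a nontrivial connected double cover has nontrivial fundamental group, and $\Sigma_L$ (which is closed) admits such a cover, namely $\partial M$, when $k=0$, it cannot then be a closed simply connected manifold.

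The step I expect to be the main obstacle is the first one, precisely the claim that the normal exponential map of $\Sigma_L$ stays a local diffeomorphism all the way out to radius $L$, i.e., that $\partial M$ contains no focal point of $\Sigma_L$. If the assertion ``$M$ is a tubular neighborhood of its soul'' is already available in the sharp form that $\mathcal E$ is a global diffeomorphism onto $M$, this is immediate. Otherwise one must upgrade the interior diffeomorphism furnished by the product metric of Theorem~\ref{thm:maingeomstruct}(i)---together with the embeddedness of free boundary geodesics and compactness of $\overline{\nu_L(\Sigma_L)}$---to a diffeomorphism up to and including the boundary, which amounts to ruling out a focal point of $\Sigma_L$ at distance exactly $L$; alternatively, one can argue through Theorem~\ref{thm:sameMorseindex1}, using that the Morse index $k$ of a free boundary geodesic accounts for all focal points of $\partial M$ along it, and comparing the focal points of $\partial M$ and of $\Sigma_L$ along the equidistant foliation $\{\Sigma_t\}$.
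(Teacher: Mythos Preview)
Your approach is genuinely different from the paper's. You work \emph{from $\Sigma_L$ outward}: granted that $\Sigma_L$ is an embedded submanifold of codimension $k+1$ and that $M$ is its tubular neighborhood, you identify $\partial M$ with the unit normal sphere bundle $\mathrm S\nu(\Sigma_L)$ via the normal exponential map and read off the bundle structure of $\mathcal F_L$. The paper instead works \emph{from $\partial M$ inward} and does not assume any of this: in Proposition~\ref{SigmaL} the three facts---$\Sigma_L$ is embedded, its codimension is $k+1$, and $\mathcal F_L$ is a $k$-sphere bundle (or double cover)---are proved simultaneously. The mechanism is Theorem~\ref{thm:sameMorseindex1}: the Morse index $k$ equals the multiplicity of the unique focal instant $t=L$, hence $\mathrm d\mathcal F_L$ has constant rank $n-1-k$; the constant rank theorem then produces both the submanifold $\Sigma_L$ and the fibration, and a separate argument rules out self-intersections of $\Sigma_L$. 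So your route is cleaner once the codimension and tubular-neighborhood facts are in hand, but in the paper those facts are not available beforehand---they are part of the same proposition.

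There is also a real (though easily repaired) gap in your Step~1. You assert that the radial geodesics from $\Sigma_L$ ``are precisely the free boundary geodesics,'' but you only have one inclusion: every free boundary geodesic meets $\Sigma_L$ orthogonally at its midpoint (Gauss' Lemma), giving a smooth map $\Psi\colon\partial M\to \mathrm S\nu(\Sigma_L)$, $p\mapsto(\gamma_p(L),-\gamma_p'(L))$; you still need that \emph{every} unit normal to $\Sigma_L$ arises from some free boundary geodesic, i.e., that $\Psi$ is onto. This follows once you note $\Phi\circ\Psi=\mathrm{id}_{\partial M}$, so $\Psi$ is an injective immersion between closed $(n-1)$-manifolds, hence a diffeomorphism onto a union of components; connectedness of $\mathrm S\nu(\Sigma_L)$ for $k>0$, and for $k=0$ the fact that $\mathcal F_L(p)=\mathcal F_L(\mathcal I(p))$ is never injective, then force $\Psi$ to be surjective. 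This also dissolves the focal-point worry you flag at the end: with $\Psi$ a diffeomorphism, $\Phi=\Psi^{-1}$ is one too, and you never need to exclude focal points of $\Sigma_L$ at distance $L$ directly.
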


Using Theorem~\ref{thm:fiberbundle} and standard classifications of disk bundles over circles and interval bundles over surfaces, we obtain a topological classification of $3$-dimensional Zoll manifolds with boundary, see Proposition~\ref{thm:class3dim}.

In order to generalize the idea of a \emph{curve of constant width} in $\Rbb^2$, Stewart Robertson \cite{Robertson1, Robertson2} introduced the concept of \emph{transnormal manifold}: a convex, connected, closed and embedded submanifold $\Sigma$ of $\Rbb^n$ such that, if $N_p\Sigma$ denotes the normal space of $\Sigma$ at $p \in \Sigma$, then, for each pair of points $p,q \in \Sigma$ we have that $q \in N_p\Sigma$ implies $N_p\Sigma = N_q\Sigma$. Later, John Bolton \cite{Bolton} generalized this concept to arbitrary ambient manifolds.

Notice that the boundary of a Zoll manifold with boundary is a transnormal hypersurface provided it is convex.  Several results about transnormal manifolds may be found in the literature, for instance, in the works above as well as in those of Seiki Nishikawa \cite{Nishikawa1, Nishikawa2}. We remark that we never assume convexity of the boundary, and that the results in our paper are original, to the best of our knowledge.

Proofs of the above results are given in the remainder of the paper. The first statements of Theorem~\ref{thm:maingeomstruct} are proven in Propositions~\ref{thm:Lp_loc_constant} and~\ref{thm:atmost2conncomp}. Part (ii) is proven in Theorem~\ref{Zolltwoboundaries}, and part (i) follows from Theorem~\ref{Zolloneboundary} and Proposition~\ref{SigmaL}. Theorem~\ref{thm:sameMorseindex1} follows from Corollary~\ref{thm:minimizer} and Proposition~\ref{thm:sameMorseindex}.

Proposition~\ref{thm:anynk} is proved in Section~\ref{sec:construction} using a mapping torus construction.

Theorem~\ref{thm:hombordo} is proved in Section~\ref{sec:topZollconnbound}, where we also obtain a topological classification of $2$ and $3$-dimensional Zoll manifolds with boundary. A particularly interesting case is that of Zoll $3$-manifolds with connected boundary when $k=0$, where the soul can be diffeomorphic to any non-simply connected closed surface, possibly nonorientable (Proposition~\ref{thm:class3dim}).

Finally, Theorem~\ref{thm:fiberbundle} is proved in Section \ref{projection}.

\section{Zoll manifolds with boundary and length of free boundary geodesics}\label{sec:length}
Let $(M,g)$ be an $n$-dimensional connected compact Riemannian manifold with smooth boundary $\partial M$. 
We introduce the following terminology: for $q\in\partial M$, an \emph{extension} of $(M,g)$ at $q$ is an $n$-dimensional smooth Riemannian manifold $(\widetilde M,\widetilde g)$ such that $M$ is embedded in $\widetilde M$, $q\in\widetilde M\setminus\partial\widetilde M$, and $\widetilde g\big\vert_M=g$.
Clearly, extensions of $(M,g)$ exist at every point $q\in\partial M$.

Let $\nu$ denote the (smooth) unit inward-pointing normal vector field to $\partial M$. For all $p\in\partial M$, let $\gamma_p$ denote the unit-speed orthogonal geodesic $t\mapsto\exp_p(t\cdot\nu_p)$, defined on some interval $[0,R_p]$ (or on the half-line $\left[0,+\infty\right[$).
\begin{definition}\label{thm:defzoll}
	A compact Riemannian manifold $(M,g)$ with boundary is said to be \emph{Zoll with boundary} if for all $p\in\partial M$, $R_p$ is the first positive instant when $\gamma_p$ meets $\partial M$, and $\gamma_p'(R_p)\in T_{\gamma_p(R_p)}(\partial M)^\perp$.
\end{definition}

In other words, $(M,g)$ is Zoll with boundary if every geodesic that starts orthogonally from the boundary remains nowhere tangent to it and meets the boundary again orthogonally.

The assumption that $\gamma_p$ is nowhere tangent to $\partial M$ ensures that the first
return time function $\partial M \ni p \longmapsto R_p \in \Rbb$ is smooth, as shown in the proof of Proposition \ref{thm:Lp_loc_constant} below. If tangency points are allowed, pathological situations such as the one depicted in Figure \ref{fig:tangency} may occur, where $R$ is not even continuous. Note that in this example, not all geodesics have the same length. In fact, we do not know whether examples exist in which all orthogonal geodesics have the same length while some of them become tangent to the boundary at an intermediate point.

\begin{figure}[h]
	\centering
	\includegraphics[width=8cm]{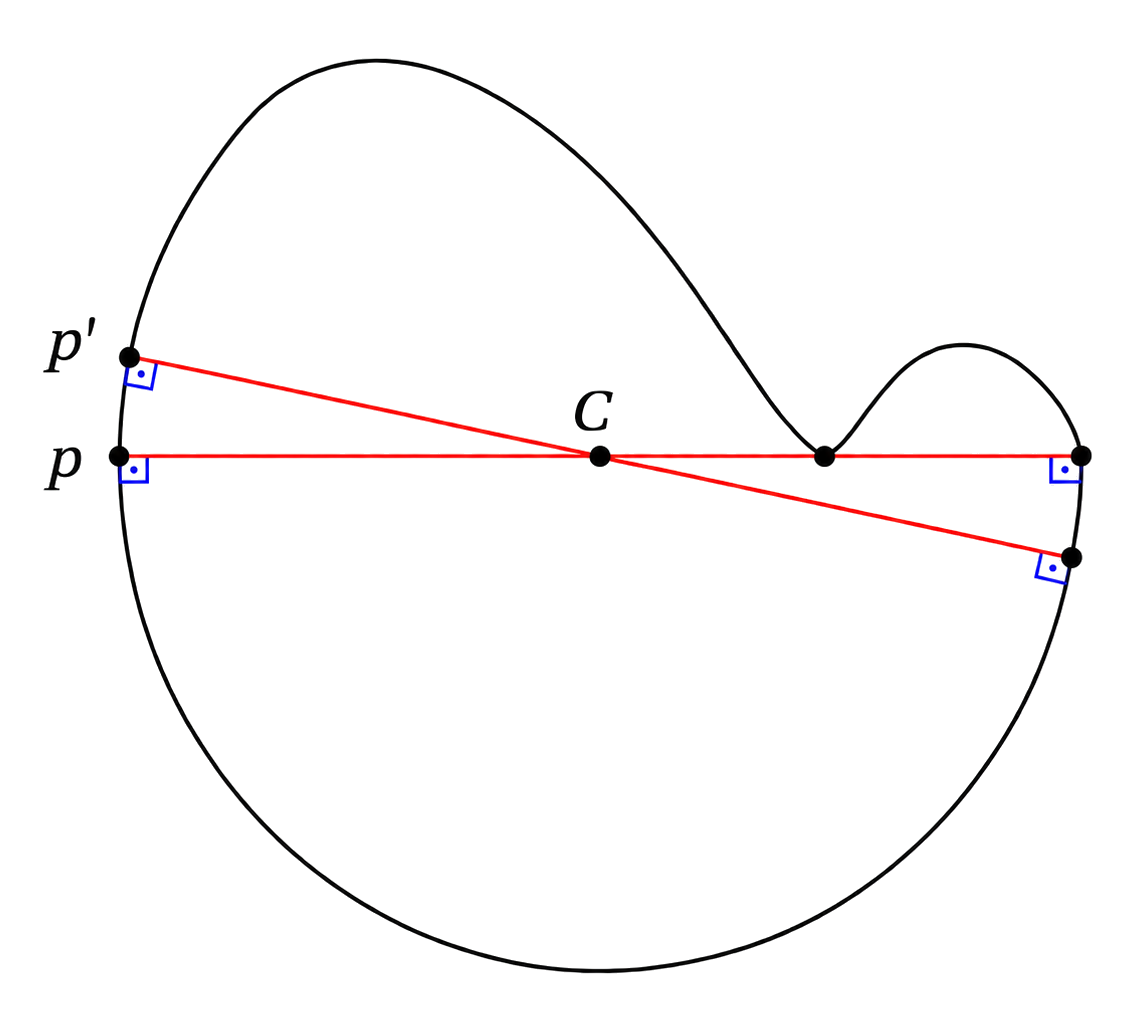}
	\caption{An example of a disk where an orthogonal geodesic becomes tangent to the boundary at an intermediate point. In this case, the first return time function $R$ fails to be continuous at $p$.}
	\label{fig:tangency}
\end{figure}
\begin{proposition}\label{thm:Lp_loc_constant}
	Let $(M,g)$ be a Zoll manifold with boundary. Then, the map $\partial M\ni p\mapsto R_p\in\mathds R$ is locally constant.
\end{proposition}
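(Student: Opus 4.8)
Fix $p_0\in\partial M$ and set $q_0=\gamma_{p_0}(R_{p_0})\in\partial M$; by the Zoll hypothesis $\gamma_{p_0}'(R_{p_0})$ is a unit vector normal to $\partial M$ at $q_0$. The plan is, after passing to an extension of $(M,g)$ across $\partial M$, to produce near $p_0$ a smooth candidate $r$ for the map $p\mapsto R_p$ by the implicit function theorem, to check that $r(p)$ is genuinely the \emph{first} instant at which $\gamma_p$ returns to $\partial M$, and finally to prove that $r$ is constant by a first-variation argument.

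First I would fix an extension $(\widetilde M,\widetilde g)$ of $(M,g)$ across the whole of $\partial M$, so that $\partial M$ is a compact embedded hypersurface contained in $\intt\widetilde M$, and let $\rho$ denote the signed distance to $\partial M$ in $\widetilde M$, positive on $\intt M$; this is smooth on a tubular neighborhood $W$ of $\partial M$ and satisfies $\nabla\rho=\nu$ along $\partial M$. Since $q_0\in\intt\widetilde M$, the map $\Psi(p,t)=\exp^{\widetilde M}_p(t\nu_p)$ is smooth near $(p_0,R_{p_0})$, and $\phi(p,t):=\rho(\Psi(p,t))$ satisfies $\phi(p_0,R_{p_0})=0$ with $\partial_t\phi(p_0,R_{p_0})\neq 0$, because $\gamma_{p_0}'(R_{p_0})$, being normal to $\partial M$, is transverse to $\{\rho=0\}=\partial M$. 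The implicit function theorem then yields a connected neighborhood $U$ of $p_0$ in $\partial M$, an interval $I\ni R_{p_0}$, and a smooth positive function $r\colon U\to I$ with $r(p_0)=R_{p_0}$ such that, for $(p,t)\in U\times I$, one has $\phi(p,t)=0$ if and only if $t=r(p)$; in particular $\gamma_p(r(p))\in\partial M$ for every $p\in U$, so $R_p\le r(p)$ there.

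The main obstacle will be to promote this to the equality $R_p=r(p)$ near $p_0$, that is, to rule out an earlier return of $\gamma_p$ to $\partial M$. For this I would first establish a ``no immediate return'' estimate: since $\operatorname{Hess}\rho$ is bounded on $W$ and $\partial M$ is compact, a Taylor expansion of $t\mapsto\rho(\gamma_p(t))$ — which vanishes and has derivative $1$ at $t=0$ — produces a uniform $\varepsilon_0>0$ with $\rho(\gamma_p(t))>0$, hence $\gamma_p(t)\notin\partial M$, for all $p\in\partial M$ and $0<t\le\varepsilon_0$; in particular $R_p\ge\varepsilon_0$ for every $p$. Then, were there a sequence $p_n\to p_0$ in $U$ with $R_{p_n}<r(p_n)$, we would have $R_{p_n}\in[\varepsilon_0,r(p_n)]$, so along a subsequence $R_{p_n}\to s\in[\varepsilon_0,R_{p_0}]$ and $\gamma_{p_n}(R_{p_n})\to\gamma_{p_0}(s)\in\partial M$; since $\gamma_{p_0}$ meets $\partial M$ on $[0,R_{p_0}]$ only at its endpoints, this forces $s=R_{p_0}$, so $\gamma_{p_n}(R_{p_n})\to q_0$ and eventually $(p_n,R_{p_n})\in U\times I$ is a zero of $\phi$, whence $R_{p_n}=r(p_n)$ by the uniqueness part of the implicit function theorem — a contradiction. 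Shrinking $U$ to a smaller connected neighborhood of $p_0$, we may thus assume $R_p=r(p)$ throughout $U$.

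Finally I would show $r$ is constant on $U$ — equivalently, that the length of a free boundary geodesic does not change along a smooth family of such geodesics — using the first variation of energy. Given a smooth path $s\mapsto p(s)$ in $U$, put $f(s)=r(p(s))=R_{p(s)}$ and consider $\beta(s,u)=\gamma_{p(s)}(uf(s))$ for $u\in[0,1]$, whose energy is $\tfrac12 f(s)^2$. In the first variation of energy the interior term vanishes because each $\beta(s,\cdot)$ is a geodesic up to affine reparametrization, and both boundary terms vanish: at $u=0$ the endpoint variation $\tfrac{d}{ds}p(s)$ is tangent to $\partial M$ whereas $\partial_u\beta(s,0)$ is a multiple of $\nu_{p(s)}$, and at $u=1$ the endpoint variation $\tfrac{d}{ds}\gamma_{p(s)}(R_{p(s)})$ is tangent to $\partial M$ whereas $\partial_u\beta(s,1)$ is a multiple of $\gamma_{p(s)}'(R_{p(s)})$, normal to $\partial M$ by the Zoll hypothesis. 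Hence $f(s)f'(s)=0$ for all $s$, and since $f>0$ the function $f$ is constant; as $U$ is path-connected, $r\equiv R_{p_0}$ on $U$. Therefore $R_p=r(p)=R_{p_0}$ in a neighborhood of $p_0$, and since $p_0$ was arbitrary, $p\mapsto R_p$ is locally constant.
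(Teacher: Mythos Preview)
Your proposal is correct and follows essentially the same line as the paper's proof: both obtain smoothness of $p\mapsto R_p$ via transversality/the implicit function theorem in an extension of $(M,g)$, and both prove constancy by a first-variation computation on the affinely reparametrized family $\beta(s,u)=\gamma_{p(s)}(uR_{p(s)})$, using that the velocity is normal to $\partial M$ at both endpoints; the paper phrases the latter step as showing that $g(J',\gamma_0')$ vanishes identically via integration by parts, which is exactly your first-variation-of-energy identity unfolded.

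The one genuine addition in your argument is the care you take to show that the implicit-function solution $r(p)$ is actually the \emph{first} return time $R_p$, via the uniform ``no immediate return'' estimate and the compactness/subsequence argument. The paper asserts without comment that $\mathcal{F}^{-1}(U\cap\partial M)$ is the graph of $q\mapsto R_q$ near $(p,R_p)$, which tacitly uses upper semicontinuity of $R_q$; your explicit treatment of this point is a welcome clarification rather than a different method.
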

\begin{proof}
	First, let us observe that the map $\partial M\ni q\mapsto R_q\in\left]0,+\infty\right[$ is smooth. Namely, let $p\in \partial M$ be fixed, and let $(\widetilde M,\widetilde g)$ be an \emph{extension} of $(M,g)$ at $\gamma_p(R_p)$. 
	
	Consider the smooth map $\mathcal{F}$ given by $(q,t)\mapsto\gamma_q(t)\in\widetilde M$,
	defined for $(q,t)$ in a neighborhood of $(p,R_p)$ in $\partial M\times\mathds R$. This map $\mathcal{F}$ is transverse to $\partial M$ at $(p,R_p)$, because: 
	\[\frac{\partial \mathcal{F}}{\partial t}(p,R_p)=\gamma_p'(R_p)\in T_{\gamma_p(R_p)}(\partial M)^\perp\setminus\{0\}.\]
	Thus, given a sufficiently small neighborhood $U$ of $\gamma_p(R_p)$ in $\widetilde M$, the preimage $\mathcal{F}^{-1}(U\cap\partial M)$ is a smooth submanifold of $\partial M\times\mathds R$. This preimage is the graph of the map $\partial M\ni q\mapsto R_q\in\mathds R$, with $q$ near $p$, which proves that $q\mapsto R_q$ is smooth.
	
	Now, let us compute the differential of the map $q\mapsto R_q$. Given $p\in\partial M$ and $v\in T_p(\partial M)$, let $\left]-\varepsilon,\varepsilon\right[\ni s\mapsto p_s\in \partial M$ be a smooth map with $p_0=p$ and $p'_0=v$; for all $s$, let $\gamma_s\colon[0,1]\to M$ denote the affine reparametrization of $\gamma_{p_s}\big([0,R_{p_s}]\big)$ on the interval $[0,1]$. Then, $\gamma_s$ is a smooth variation of $\gamma_0$. Note that $g(\gamma_s',\gamma_s')=R_{p_s}^2$. Differentiating, we obtain:
	\[\frac12\,\frac{\mathrm d}{\mathrm ds}\Big\vert_{s=0}\;g(\gamma_s',\gamma_s')=g(J',\gamma_0'),\]
	where $J$ is the Jacobi field along $\gamma_0$ given by $\frac{\mathrm d}{\mathrm ds}\big\vert_{s=0}\gamma_s$, and $J'$ is its covariant derivative. By differentiating, it is easy to see that $g(J',\gamma_0')$ is constant on $[0,1]$; let us show that this constant is zero. Observe that since $\gamma_s$ is a free boundary geodesic in $M$, then $J(0)$ and $J(1)$ are tangent to $\partial M$. So, integration by parts gives:
	\[\int_0^1g(J',\gamma_0')\,\mathrm dt=-\int_0^1g\big(J,\tfrac{\mathrm D}{\mathrm dt}\gamma_0'\big)\,\mathrm dt+g(J,\gamma_0')\big\vert_0^1=0,\]
	because $\tfrac{\mathrm D}{\mathrm dt}\gamma_0'\equiv0$, and because $\gamma_0'$ is orthogonal to $\partial M$ at the endpoints.
	This proves that the map $q\mapsto R_q$ has vanishing differential, i.e., it is locally constant on $\partial M$.
\end{proof}
In fact, one can show that in a Zoll manifold with boundary, the map $p\mapsto R_p$ is constant. In other words, the Zoll condition implies that all free boundary geodesics have the same length. Thus, the free boundary analogues of closed $C$-manifolds and Zoll manifolds coincide.

\begin{proposition}\label{thm:atmost2conncomp}
	Let $(M,g)$ be a Zoll manifold with boundary. Then $\partial M$ has at most two connected components, and the first return function $\partial M\ni p\mapsto R_p\in\mathds R$ is constant; in particular, all free boundary geodesics have the same length. If $M$ has two boundary components, then they are diffeomorphic.
\end{proposition}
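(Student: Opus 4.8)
The plan is to exploit the free boundary geodesic flow to build, for each connected component $\partial_0 M$ of $\partial M$, a diffeomorphism onto the component it ``hits at the other end'', and to run a connectedness argument on $M$ itself. Let me set up the main map. For $p \in \partial M$, the geodesic $\gamma_p$ meets $\partial M$ orthogonally at time $R_p$; define $\Phi\colon \partial M \to \partial M$ by $\Phi(p) = \gamma_p(R_p)$. By the Zoll condition, $\gamma_p'(R_p)$ is a unit normal to $\partial M$, and since $\gamma_p$ enters $M$ from $p$ and is nowhere tangent to $\partial M$, it must be \emph{outward}-pointing at $\gamma_p(R_p)$; hence $-\gamma_p'(R_p) = \nu_{\Phi(p)}$. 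Reversing the geodesic, $t \mapsto \gamma_p(R_p - t)$ is precisely the orthogonal geodesic $\gamma_{\Phi(p)}$, and it returns to $p$ at time $R_p$. This shows $R_{\Phi(p)} = R_p$ and $\Phi(\Phi(p)) = p$, so $\Phi$ is an involution. Smoothness of $\Phi$ follows from the smoothness of $(q,t) \mapsto \gamma_q(t)$ (working in a local extension $(\widetilde M, \widetilde g)$ as in the proof of Proposition~\ref{thm:Lp_loc_constant}) together with the smoothness of $q \mapsto R_q$ already established there; so $\Phi$ is a diffeomorphism of $\partial M$.

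Next I would combine $\Phi$ with Proposition~\ref{thm:Lp_loc_constant}. Since $p \mapsto R_p$ is locally constant, it is constant on each connected component of $\partial M$. If $\partial_0 M$ is a component, then $\Phi(\partial_0 M)$ is a connected subset of $\partial M$, hence contained in a single component $\partial_1 M$, and $\Phi\colon \partial_0 M \to \partial_1 M$ is a diffeomorphism (its inverse is the restriction of $\Phi$ to $\partial_1 M$); in particular $R_p$ takes the same value on $\partial_0 M$ and on $\partial_1 M$. If $\partial_0 M = \partial_1 M$ this is no constraint; if they are distinct, the two components are diffeomorphic. To get that there are at most two components and that $R_p$ is globally constant, I would argue as follows: consider the open sets $U_t = \{ \gamma_p(s) : p \in \partial M,\ 0 \le s < t\}$ ... more cleanly, fix one component $\partial_0 M$, let $L_0$ be the common value of $R_p$ on it, and let $N = \bigcup_{p \in \partial_0 M} \gamma_p([0, R_p])$, the union of all free boundary geodesics emanating from $\partial_0 M$. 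Each such geodesic starts and ends on $\partial_0 M \cup \partial_1 M$ and stays in $M$; I claim $N$ is both open and closed in $M$. Closedness is clear since $N$ is the image of the compact set $\partial_0 M \times [0,1]$ under the continuous map $(p, r) \mapsto \gamma_p(r R_p)$. For openness: at an interior point $\gamma_p(s)$ with $0 < s < R_p$, the map $(q, \tau) \mapsto \gamma_q(\tau)$, defined on a neighborhood of $\partial_0 M \times \{s\}$ inside $\partial_0 M \times \mathbb{R}$, is a submersion onto $M$ near $\gamma_p(s)$ — because its differential already surjects onto $T_{\gamma_p(s)}M$ (the $\partial/\partial\tau$ direction gives $\gamma_p'(s)$, transverse to the hypersurface swept by varying $q$, since $s$ is not a focal time... here I should instead just note that $\gamma_p$ restricted to $(0, R_p)$ has no boundary-focal points before $R_p$, or simply invoke that the normal exponential flow from $\partial_0 M$ is a local diffeomorphism away from the boundary until the first focal time, which is $\ge R_p$ by minimality in the Zoll definition). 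At boundary points of $N$, i.e. points of $\partial_0 M \cup \Phi(\partial_0 M)$, openness in $M$ follows from the collar structure together with the fact that geodesics leave and re-enter. Since $M$ is connected, $N = M$, and then every component of $\partial M$ is either $\partial_0 M$ or $\Phi(\partial_0 M)$; hence $\partial M$ has at most two components, and $R_p \equiv L_0$ everywhere.

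The step I expect to be the main obstacle is the openness of $N$ near the boundary, and more precisely making rigorous the claim that no boundary-focal point occurs strictly before time $R_p$ along $\gamma_p$ — this is what guarantees the normal exponential flow from $\partial_0 M$ is a local diffeomorphism on $(0, R_p)$ and hence that $N$ has nonempty interior filling out a neighborhood of each such point. This should follow from the definition of Zoll with boundary (``$R_p$ is the \emph{first} positive instant when $\gamma_p$ meets $\partial M$'' forces the family of nearby orthogonal geodesics not to cross $\gamma_p$ before returning), but it needs the index-form / second-variation argument that a focal point before $R_p$ would let one shorten $\gamma_p$ to a path between boundary points hitting $\partial M$ at an interior time, contradicting minimality — an argument that is anyway needed later in the paper for the Morse-index results, so I would phrase it as a lemma here. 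An alternative, perhaps cleaner route to ``at most two components'' that sidesteps the openness subtlety: note $M \setminus \partial M$ is connected, and the interiors of the geodesic segments from $\partial_0 M$ foliate an open dense subset; since $\Phi$ is a diffeomorphism pairing up components, a component not equal to $\partial_0 M$ or $\Phi(\partial_0 M)$ would be ``unreachable'' by geodesics from $\partial_0 M$, yet the union of all free boundary geodesics from all components is all of $M$ (each point of $M\setminus\partial M$ lies on one, by running the geodesic through it with initial orthogonal data at the boundary in both directions), forcing a contradiction with connectedness of $M$.
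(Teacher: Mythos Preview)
Your setup of the involution $\Phi$ is fine and matches the paper. The gap is in the ``at most two components'' step. Your argument needs the normal exponential map from $\partial_0 M$ to be a local diffeomorphism on $(0,R_p)$, which in turn needs $\gamma_p$ to have no $\partial M$-focal points before time $R_p$. This is false in general, and your justification (``$\ge R_p$ by minimality in the Zoll definition'') misreads Definition~\ref{thm:defzoll}: $R_p$ is the first time $\gamma_p$ \emph{returns to $\partial M$}, not the first time it stops minimizing distance to $\partial M$. Concretely, in any Zoll manifold with connected boundary and index $k>0$ (these exist, see Proposition~\ref{thm:anynk}), every $\gamma_p$ has a focal point at $t=L=R_p/2$. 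So the index-form ``lemma'' you propose to extract here would in fact be a false statement, and the openness of $N$ cannot be obtained this way. Your alternative route does not repair this: it again asserts openness (``foliate an open dense subset''), and the parenthetical justification for surjectivity (``running the geodesic through it with initial orthogonal data at the boundary'') is not meaningful at a generic interior point.

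The paper's argument avoids all of this with a one-line distance minimization that you are circling around but never land on. If a third component $C''$ existed, a curve of minimal length from $\partial_0 M$ to $C''$ (which exists by compactness and connectedness of $M$) is, by first variation, a free boundary geodesic $\gamma_p$ with $p\in\partial_0 M$; but then its other endpoint is $\Phi(p)\in\Phi(\partial_0 M)\ne C''$, a contradiction. The same minimization idea (take the nearest point of $\partial_0 M$ to a given $q\in M$) also gives $N=M$ directly with no focal-point analysis; this is exactly Proposition~\ref{surjection} later in the paper, and combined with the fact that $\gamma_p$ meets $\partial M$ only at its endpoints it immediately forces $\partial M=\partial_0 M\cup\Phi(\partial_0 M)$.
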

\begin{proof}
	Denote by $C$ a connected component of $\partial M$.
	The map $C\ni p\mapsto \gamma_p(R_p)\in\partial M$ is continuous, and therefore it takes values in some fixed connected component $C'$ of $\partial M$. There cannot exist a connected component $C''$ of $\partial M$ which is distinct from $C$ and $C'$. Namely, any curve of minimal length between $C$ and $C''$ (which exists by the compactness and connectedness assumptions on $M$) is a free boundary geodesic with endpoints in $C$ and $C''$, while we have just observed that any free boundary geodesic with one endpoint in $C$ must have its other endpoint in $C'$. Thus, $\partial M$ has at most two connected components. 
	
	Now, assuming that $\partial M$ has exactly two connected components, by a similar argument we obtain that every free boundary geodesic in $M$ must have its endpoints lying in different connected components of $\partial M$. This observation leads to the conclusion that the map $p\mapsto R_p$, which is in fact the length of the free boundary geodesic starting at $p$, must take the same constant value on each of the two connected components of $\partial M$. Moreover, the map $\partial M\ni p\mapsto\gamma_p(R_p)\in\partial M$ is an involution, which carries diffeomorphically one of the two connected components of $\partial M$ to the other.
\end{proof}

From the proof of Proposition~\ref{thm:atmost2conncomp} we obtain:

\begin{corollary}\label{thm:minimizer}
	In a Zoll manifold $(M,g)$ with two boundary components, every free boundary geodesic minimizes the distance between these two connected components.\qed
\end{corollary}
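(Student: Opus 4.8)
The plan is to reduce the statement to the single equality $\ell=\dist(C_1,C_2)$, where $C_1,C_2$ are the two connected components of $\partial M$ and $\ell$ is the constant value of the map $p\mapsto R_p$ furnished by Proposition~\ref{thm:atmost2conncomp}. Indeed, the proof of that proposition also shows that every free boundary geodesic joins $C_1$ to $C_2$; such a geodesic is then a curve of length $R_p=\ell$ connecting the two components, so once we know that $\ell=\dist(C_1,C_2)$ it is automatically length-minimizing, which is exactly the assertion to be proved.

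To establish $\ell=\dist(C_1,C_2)$, I would first use the compactness of $M$ — hence of the disjoint compact sets $C_1,C_2$ — to produce an arc-length minimizer $\sigma\colon[0,d]\to M$ with $\sigma(0)\in C_1$, $\sigma(d)\in C_2$, and $d=\dist(C_1,C_2)>0$. The next step is to observe that $\sigma$ cannot meet $\partial M$ for $0<t<d$: if $\sigma(t_0)\in\partial M$ with $0<t_0<d$, then $\sigma(t_0)$ lies in $C_1$ or in $C_2$, and restricting $\sigma$ to $[t_0,d]$ or to $[0,t_0]$ respectively yields a strictly shorter curve between the two components, a contradiction. Consequently $\sigma$ is an interior geodesic on $(0,d)$, and the classical first-variation formula applied at the two free endpoints (varying $\sigma(0)$ within $C_1$ and $\sigma(d)$ within $C_2$) forces $\sigma'(0)\perp T_{\sigma(0)}C_1$ and $\sigma'(d)\perp T_{\sigma(d)}C_2$; since $\sigma$ enters the interior of $M$, the unit vector $\sigma'(0)$ must equal $\nu_{\sigma(0)}$. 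Thus $\sigma$ and $\gamma_{\sigma(0)}$ have the same initial conditions and coincide on their common domain.

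It then remains only to compare $d$ with $\ell=R_{\sigma(0)}$. If $\ell<d$, then $\gamma_{\sigma(0)}(\ell)=\sigma(\ell)\in\partial M$ with $0<\ell<d$, contradicting the fact, just proved, that $\sigma$ avoids $\partial M$ on $(0,d)$; if $d<\ell$, then $\sigma(d)=\gamma_{\sigma(0)}(d)\in\partial M$ with $0<d<\ell$, contradicting that $\ell$ is the \emph{first} positive time at which $\gamma_{\sigma(0)}$ meets $\partial M$. Hence $d=\ell$, which completes the argument as explained in the first paragraph. The only substantive ingredient is the standard first-variation argument identifying a shortest curve between the two components with a geodesic orthogonal to both, and the two non-routine facts — constancy of $R_p$ and the fact that free boundary geodesics join distinct components of $\partial M$ — are precisely what the proof of Proposition~\ref{thm:atmost2conncomp} already supplies, so I do not anticipate any real obstacle here.
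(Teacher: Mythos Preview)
Your argument is correct and is essentially the paper's own approach, just spelled out in full: the paper states the corollary with a \qed and derives it directly from the proof of Proposition~\ref{thm:atmost2conncomp}, where it was already observed that a curve of minimal length between two boundary components is a free boundary geodesic, so constancy of $R_p$ forces every free boundary geodesic to realize this minimum. Your explicit verification that the minimizer avoids $\partial M$ on $(0,d)$ and the dichotomy $d<\ell$ versus $\ell<d$ are exactly the details the paper leaves implicit.
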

It follows in particular that there are no $\partial M$-focal points in $M$ when  $(M,g)$ is a Zoll manifold with disconnected boundary.

\section{Degeneracy and Morse index}\label{sec:degMorseInd}
The main references for this section are the papers \cite{Kal88, PicTau99}.
Free boundary geodesics in a Riemannian manifold with boundary $(M,g)$ are critical points of the energy functional $E(\gamma)=\frac12\int_0^1g(\gamma',\gamma')\,\mathrm dt$ defined in the space $\mathcal H(M,\partial M)$ of curves $\gamma\colon[0,1]\to M$ of Sobolev regularity $H^1$, and satisfying the boundary condition $\gamma(0),\gamma(1)\in\partial M$. It is well known that critical points of $E$ in $\mathcal H(M,\partial M)$ have finite Morse index. A Morse index theorem for this situation can be easily obtained from the Morse index theorem in the case of both endpoints varying in submanifolds, see for instance \cite{PicTau99} for the statement and a concise proof of this result.   

Recall that the Morse index of a free boundary geodesic $\gamma\colon[0,1]\to M$ is given by the index of the symmetric bilinear form
\begin{multline*}
	I_\gamma(V,W)=\int_0^1g(V',W')+g(\mathcal R_\gamma V,W)\,\mathrm dt \\ 
	+\mathcal S_{\gamma'(1)}\big(V(1),W(1)\big)-
	\mathcal S_{\gamma'(0)}\big(V(0),W(0)\big)
\end{multline*}
defined in the space $\mathcal H_\gamma$ of vector fields (of Sobolev class $H^1$) along $\gamma$ that are tangent to $\partial M$ at the endpoints. Here, the prime ${}'$ denotes the covariant derivative of fields along $\gamma$, $\mathcal S$ denotes the second fundamental form of $\partial M$, and $\mathcal R_\gamma=\mathcal R(\gamma',\cdot)\gamma'$ is the curvature tensor of the Levi--Civita connection of $g$, chosen with the sign convention $\mathcal R(X,Y)=[\nabla_X,\nabla_Y]-\nabla_{[X,Y]}$.

The kernel of $I_\gamma$ is the space of \emph{free boundary Jacobi fields} along $\gamma$, i.e., the set of vector fields $J$ along $\gamma$ satisfying the Jacobi equation $J''=\mathcal R_\gamma J$, and the boundary conditions:
\begin{eqnarray}
	&& J(0)\in T_{\gamma(0)}(\partial M),\quad J'(0)+\mathcal S_{\gamma'(0)}\big(J(0)\big)\in T_{\gamma(0)}(\partial M)^\perp\label{eq:freeboundJacobi1}\\
	&& J(1)\in T_{\gamma(1)}(\partial M),\quad J'(1)+\mathcal S_{\gamma'(1)}\big(J(1)\big)\in T_{\gamma(1)}(\partial M)^\perp .\label{eq:freeboundJacobi2}
\end{eqnarray}
We claim that the dimension of the kernel of $I_\gamma$ is, in general, less than or equal to $n-1$, with $n=\dim M$. Namely, the space of Jacobi fields satisfying only the initial conditions \eqref{eq:freeboundJacobi1} has dimension $n$. Moreover, the Jacobi field $J(t)=t\gamma'(t)$ satisfies \eqref{eq:freeboundJacobi1}, but not \eqref{eq:freeboundJacobi2}. From these two observations, the claim follows.

Let us denote by $\mathbb J_\gamma$ the space of Jacobi fields along $\gamma$ that satisfy the initial condition \eqref{eq:freeboundJacobi1}. This is easily seen to be a vector space of dimension $n$. An instant $t\in\left]0,1\right]$ is said to be \emph{focal along $\gamma$} if the map $\mathbb J_\gamma\ni J\mapsto J(t)\in T_{\gamma(t)}M$ is not surjective. In this case, the codimension of the image of this map is defined to be the \emph{multiplicity} of the focal instant. It is well known that the focal instants along $\gamma$ are isolated, and the multiplicity of each focal instant is less than or equal to $n-1$.

Let us now consider the space $\mathbb J_\gamma^\parallel$ consisting of Jacobi fields $J$ along $\gamma$ satisfying \eqref{eq:freeboundJacobi1} and $J(1)\in T_{\gamma(1)}(\partial M)$, and the symmetric bilinear form $\mathcal A_\gamma\colon\mathbb J_\gamma^\parallel\times\mathbb J_\gamma^\parallel\to\mathds R $, defined by:
\[\mathcal A_\gamma(J_1,J_2)=\mathcal S_{\gamma'(1)}\big(J_1(1),J_2(1)\big)+g\big(J_1'(1),J_2(1)\big).\]
The symmetry of $\mathcal A_\gamma$ is easily verified using the fact that, for $J_1,J_2\in\mathbb J_\gamma$, 
$g\big(J_1'(t),J_2(t)\big)=g\big(J_1(t),J_2'(t)\big)$ for all $t\in[0,1]$. Note that $\mathcal A_\gamma(J_1,J_2)=0$ for all $J_2\in\mathbb J_\gamma^\parallel$ if $J_1$ satisfies \eqref{eq:freeboundJacobi2}.

A Morse index theorem is available in this setting, see \cite{Kal88}, or \cite{PicTau99} for a more recent proof. If $t=1$ is not focal along $\gamma$, then the index of the bilinear form $I_\gamma$ in the space $\mathcal H_\gamma$ is equal to the number of focal instants along $\gamma$, counted with multiplicity, plus the index of the bilinear form $\mathcal A_\gamma$ in the space $\mathbb J_\gamma^\parallel$. %

Let us now analyze the situation in the case when $(M,g)$ is Zoll with boundary (Definition~\ref{thm:defzoll}).
\begin{proposition}\label{thm:maxdeg}
	In a Zoll manifold with boundary, every free boundary geodesic $\gamma$ in $M$ is \emph{maximally degenerate}, i.e.,  the kernel of $I_\gamma$ has maximal dimension $n-1$. Moreover, the Morse index of every free boundary geodesic $\gamma$ is equal to the number of $\partial M$-focal points along $\gamma$ (counted with multiplicity).
\end{proposition}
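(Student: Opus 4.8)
The plan is to prove the two claims separately. \emph{Maximal degeneracy} will follow from the fact that the free boundary geodesics of $(M,g)$ form an $(n-1)$-parameter family, so that their infinitesimal variations produce many elements of $\ker I_\gamma$. Write $\gamma=\gamma_p$, affinely reparametrized on $[0,1]$ with constant speed $2L$, where $2L=R_p$ is the common length of all free boundary geodesics (Proposition~\ref{thm:atmost2conncomp}). Given $v\in T_p(\partial M)$, choose a curve $s\mapsto p_s\in\partial M$ with $p_0=p$ and $\dot p_0=v$, and set $\Phi(s,t)=\exp_{p_s}(t\,\nu_{p_s})$; using an extension of $(M,g)$ near $\partial M$ this is smooth, and by the Zoll property $\Phi(s,\cdot)$ stays in $M$ for $t\in[0,2L]$ and is a geodesic meeting $\partial M$ orthogonally at both endpoints. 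Hence $J_v:=\partial_s\Phi(0,\cdot)$ is a Jacobi field along $\gamma$, and differentiating at $s=0$ the relations $\Phi(s,0),\Phi(s,2L)\in\partial M$ and $\partial_t\Phi(s,0),\partial_t\Phi(s,2L)\perp\partial M$ gives exactly the conditions \eqref{eq:freeboundJacobi1}--\eqref{eq:freeboundJacobi2} (a standard computation; compare the proof of Proposition~\ref{thm:Lp_loc_constant}, now also retaining the normal components, whence the second fundamental form terms). Thus $J_v\in\ker I_\gamma$; since $J_v(0)=v$, the map $v\mapsto J_v$ is injective, so $\dim\ker I_\gamma\ge n-1$, and with the bound $\dim\ker I_\gamma\le n-1$ recalled above this is an equality. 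Moreover evaluation $\ker I_\gamma\ni J\mapsto J(0)\in T_{\gamma(0)}(\partial M)$ is a linear isomorphism: it is surjective by the construction of the $J_v$, and if $J\in\ker I_\gamma$ has $J(0)=0$ then \eqref{eq:freeboundJacobi1} forces $J'(0)\in T_{\gamma(0)}(\partial M)^\perp=\Rbb\,\gamma'(0)$, hence $J=c\,t\gamma'(t)$, which satisfies \eqref{eq:freeboundJacobi2} only for $c=0$ because $\gamma'(1)$ is normal, not tangent, to $\partial M$.

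For the index statement I would apply the Morse index theorem recalled above, which requires two inputs. The first is that $\mathbb J_\gamma^\parallel=\ker I_\gamma$. Indeed $\ker I_\gamma\subseteq\mathbb J_\gamma^\parallel$, while $\mathbb J_\gamma^\parallel$ is the kernel of the linear map $\mathbb J_\gamma\to T_{\gamma(1)}M/T_{\gamma(1)}(\partial M)$, $J\mapsto J(1)$, which is nonzero since $t\gamma'(t)\in\mathbb J_\gamma$ maps to $\gamma'(1)\notin T_{\gamma(1)}(\partial M)$; hence $\dim\mathbb J_\gamma^\parallel=n-1=\dim\ker I_\gamma$ and the two spaces coincide. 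Consequently every element of $\mathbb J_\gamma^\parallel$ satisfies \eqref{eq:freeboundJacobi2}, so by the remark preceding the Morse index theorem $\mathcal A_\gamma$ vanishes identically on $\mathbb J_\gamma^\parallel$, and in particular its index is $0$.

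The second input is that $t=1$ is not a focal instant along $\gamma$. Here I would use time reversal: the curve $\tilde\gamma(t)=\gamma(1-t)$ is again a free boundary geodesic of $(M,g)$, since $\gamma(1)$ issues an inward orthogonal geodesic which, by the Zoll property and Proposition~\ref{thm:atmost2conncomp}, has length $2L$ and reaches $\partial M$ orthogonally, at $\gamma(0)$. Now suppose $J\in\mathbb J_\gamma$ satisfies $J(1)=0$; then $J\in\mathbb J_\gamma^\parallel=\ker I_\gamma$, so $J$ is a free boundary Jacobi field, and since conditions \eqref{eq:freeboundJacobi1}--\eqref{eq:freeboundJacobi2} are interchanged by $t\leftrightarrow 1-t$ (the second fundamental form being linear in the normal direction), the field $\tilde J(t)=J(1-t)$ lies in $\ker I_{\tilde\gamma}$. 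But $\tilde J(0)=J(1)=0$, and evaluation at $0$ is injective on $\ker I_{\tilde\gamma}$ by the isomorphism of the first paragraph applied to $\tilde\gamma$; hence $\tilde J\equiv 0$ and $J\equiv 0$. So no nonzero element of $\mathbb J_\gamma$ vanishes at $t=1$, i.e.\ $t=1$ is not focal. The Morse index theorem then gives that the index of $I_\gamma$ equals the number of focal instants along $\gamma$, counted with multiplicity, plus the index of $\mathcal A_\gamma$; since the latter is $0$ and the focal instants are precisely the $\partial M$-focal points along $\gamma$, this is the assertion.

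Apart from the soft first paragraph, the crux is showing that the final endpoint $\gamma(1)$ is not itself a focal point of $\partial M$: this is invisible to a plain dimension count and genuinely uses the global Zoll hypothesis through the time-reversal symmetry of free boundary geodesics. The remaining verifications (the variation computation producing \eqref{eq:freeboundJacobi1}--\eqref{eq:freeboundJacobi2}, and the dimension bookkeeping for $\mathbb J_\gamma^\parallel$) are routine.
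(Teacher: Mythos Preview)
Your argument is correct and follows the same route as the paper: exhibit $\dim\ker I_\gamma=n-1$ via the $(n-1)$-parameter variation through free boundary geodesics, conclude $\mathbb J_\gamma^\parallel=\ker I_\gamma$ so that $\mathcal A_\gamma\equiv 0$, and apply the Morse index theorem. You additionally verify that $t=1$ is not focal (a hypothesis the paper's brief proof leaves implicit); the time-reversal argument is valid, though a direct check is shorter---if $J\in\mathbb J_\gamma$ with $J(1)=0$ then $J\in\mathbb J_\gamma^\parallel=\ker I_\gamma$, so \eqref{eq:freeboundJacobi2} gives $J'(1)\in\Rbb\,\gamma'(1)$, hence $J(t)=c(t-1)\gamma'(t)$, and $J(0)=-c\gamma'(0)\in T_{\gamma(0)}(\partial M)$ forces $c=0$.
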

\begin{proof}
	This follows readily from the observation that, given any free boundary geodesic $\gamma\colon[0,1]\to M$ in $M$, one obtains an $(n-1)$-parameter family of deformations of $\gamma$ by free boundary geodesics (having the same length as $\gamma$) by varying the initial endpoint on  $\partial M$. 
	
	By the same argument, one easily deduces that the space $\mathbb J_\gamma^\parallel$ has maximal dimension $n-1$, and it coincides with the kernel of $I_\gamma$. Therefore, $\mathcal A_\gamma\equiv0$, and the Morse index theorem tells us that the index of $I_\gamma$ is equal to the number of focal points along $\gamma$.
\end{proof} 
From Proposition~\ref{thm:maxdeg}, we easily obtain the following.
\begin{proposition}\label{thm:sameMorseindex}
	Let $(M,g)$ be a Zoll manifold with boundary. Then, every two free boundary geodesics of $M$ have the same Morse index.
	%$d\in\{0,\ldots,n-1\}$.
	This number is zero if $\partial M$ has two connected components.
\end{proposition}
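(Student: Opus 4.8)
The plan is to use Proposition~\ref{thm:maxdeg} to reduce the statement to the fact that the Morse index is a locally constant function on the space of free boundary geodesics: by maximal degeneracy the nullity is constantly $n-1$, and the Morse index of a continuous family of index forms can only jump where the nullity jumps, so on the connected space of free boundary geodesics the index must be constant.

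I would first dispose of the case of two boundary components. By Corollary~\ref{thm:minimizer} every free boundary geodesic $\gamma$ minimizes the distance between the two components of $\partial M$; hence the second variation of energy along $\gamma$, with endpoints free on $\partial M$, is nonnegative, i.e.\ $I_\gamma\ge 0$, so $\mathrm{ind}(I_\gamma)=0$. (Equivalently, there are no $\partial M$-focal points along $\gamma$, and Proposition~\ref{thm:maxdeg} then gives index $0$.) This proves both assertions in the disconnected case.

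For the connected case, recall that every free boundary geodesic is, after affine reparametrization on $[0,1]$, one of the geodesics $\bar\gamma_p(t)=\gamma_p(2Lt)$, $p\in\partial M$ --- all of length $2L$ by Proposition~\ref{thm:atmost2conncomp} --- and that $(p,t)\mapsto\bar\gamma_p(t)$ is smooth, with endpoints $\bar\gamma_p(0)=p$ and $\bar\gamma_p(1)=\gamma_p(2L)$ depending smoothly on $p$. I would then fix a smooth trivialization of the bundle $\bigcup_{p}\bar\gamma_p^*TM\to\partial M\times[0,1]$ carrying the endpoint splittings $TM=T\partial M\oplus\nu$ at $\bar\gamma_p(0)$ and $\bar\gamma_p(1)$ to fixed ones; this identifies all the spaces $\mathcal H_{\bar\gamma_p}$ with a single Hilbert space $\mathcal H$ and turns $p\mapsto I_{\bar\gamma_p}$ into a norm-continuous family of bounded symmetric bilinear forms on $\mathcal H$, each a compact perturbation of a fixed positive-definite form (the curvature term is a zeroth-order multiplication operator and the endpoint second-fundamental-form terms factor through evaluation at the endpoints, hence both are compact by Rellich's theorem). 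For such a family the index $\mathrm{ind}(I_{\bar\gamma_p})$ is lower semicontinuous in $p$, while $\mathrm{ind}(I_{\bar\gamma_p})+\mathrm{null}(I_{\bar\gamma_p})$ is upper semicontinuous; since Proposition~\ref{thm:maxdeg} gives $\mathrm{null}(I_{\bar\gamma_p})=\dim\ker I_{\bar\gamma_p}=n-1$ for every $p$, the nullity is constant and may be discarded, so $\mathrm{ind}(I_{\bar\gamma_p})$ is simultaneously lower and upper semicontinuous, hence locally constant, hence constant on the connected manifold $\partial M$.

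The only point that is not routine bookkeeping is the semicontinuity input --- the fact that the Morse index of a continuous family of Fredholm quadratic forms changes only through the kernel --- but this is classical in this setting (see \cite{Kal88,PicTau99}), and here it is immediate to apply because Proposition~\ref{thm:maxdeg} freezes the nullity at its maximal value $n-1$, removing the usual source of index jumps.
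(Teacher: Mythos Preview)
Your proposal is correct and follows essentially the same approach as the paper: reduce to Proposition~\ref{thm:maxdeg} for constant nullity $n-1$, invoke the functional-analytic principle that the Morse index of a continuous family of Fredholm index forms is locally constant where the nullity is, and use Corollary~\ref{thm:minimizer} for the disconnected case. The only difference is presentational---you spell out the Hilbert-space setup (trivialization, compact-perturbation structure, semicontinuity of $\mathrm{ind}$ and $\mathrm{ind}+\mathrm{null}$) that the paper relegates to a footnote and an external reference; note, however, that the global trivialization you invoke need not exist over all of $\partial M$, so the argument should be run locally (which is all you need for the locally-constant conclusion you actually draw).
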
 
\begin{proof}
	Any two free boundary geodesics in $(M,g)$ are homotopic by a continuous homotopy of free boundary geodesics, which by Proposition \ref{thm:maxdeg} are maximally degenerate. A jump of the Morse index function can only occur\footnote{%
		A formal proof of this fact requires some functional-analytical arguments, based on the fact that, in the appropriate functional analytical settings, the index form of a continuous  path of free boundary geodesics is represented by a continuous path of \emph{essentially positive} self-adjoint operators on a Hilbert space. The number of negative eigenvalues of these operators can only change if the dimension of their kernels is not constant. The interested reader can find details in \cite[Section~2]{Gia01}.} if the dimension of the kernel of the corresponding index form is non-constant, which does not occur. 
	
	If $\partial M$ has two connected components, then each free boundary geodesic in $M$ minimizes the distance between these two components (Corollary~\ref{thm:minimizer}). Therefore, the Morse index is equal to $0$.
\end{proof}

\begin{remark}\label{rem:maxdegen}
	Note that the maximal degeneracy argument can also be applied to prove that in a closed Zoll manifold all prime closed geodesics have the same Morse index. The degeneracy of a periodic geodesic $\gamma$ in a closed Zoll manifold of dimension $n\ge2$ is equal to $2n-2$, which is the maximal dimension of the space of periodic Jacobi fields along $\gamma$.
\end{remark}

The result of Proposition~\ref{thm:sameMorseindex} suggests the following:
\begin{definition}
	The \emph{index} of a Zoll manifold with boundary is the Morse index of any of its free boundary geodesics.
\end{definition}

\section{Geometry of Zoll manifolds with boundary}
Let us fix some notation to be used in the remainder of the paper. 
Let $(M,g)$ be a Zoll manifold with boundary whose free boundary geodesics all have length $2L$. 
Denote by $\Sigma$ one of its boundary components. 
For each point $p \in \Sigma$, let 
$\gamma_p : [0,2L] \to M$
be the unit-speed free boundary geodesic of $M$ starting at $p$. 
Define the map 
\[
\mathcal{F} : \Sigma \times [0,2L] \to M, \qquad \mathcal{F}(p,t) = \gamma_p(t).
\] 
For each $t \in [0,2L]$, we set 
$\Sigma_t = \mathcal{F}(\Sigma,t)$. Notice that $\mathcal{F}$ is a surjective map.
\begin{proposition}\label{surjection}
	Each point in a Zoll manifold with boundary lies on a free boundary geodesic.
\end{proposition}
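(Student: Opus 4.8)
The plan is to show that the map $\mathcal{F} : \Sigma_0 \times [0,2L] \to M$ has image equal to all of $M$, where $\Sigma_0$ is a fixed boundary component. First I would argue that the image $\mathcal{F}(\Sigma_0 \times [0,2L]) = \bigcup_{t} \Sigma_t$ is closed in $M$: since $\Sigma_0$ is compact (being a closed manifold, as a component of $\partial M$) and $[0,2L]$ is compact, the product $\Sigma_0 \times [0,2L]$ is compact; $\mathcal{F}$ is continuous (indeed smooth, as it is built from the exponential map), so its image is a compact, hence closed, subset of $M$.

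Next I would show the image is open in $M$. Since $M$ is connected, combining openness and closedness of a nonempty set forces it to be all of $M$, which is the desired conclusion. Openness at interior points is the geometric heart of the matter. Given $q = \gamma_p(t_0)$ with $t_0 \in {]0,2L[}$, I would use the fact that $\gamma_p'(t_0) \neq 0$ together with a transversality/submersion argument: consider the map $(p,t) \mapsto \gamma_p(t)$ near $(p,t_0)$; since varying $t$ moves in the direction $\gamma_p'(t_0)$ and, by the maximal degeneracy established in Proposition~\ref{thm:maxdeg}, varying $p \in \Sigma_0$ produces an $(n-1)$-parameter family of free boundary Jacobi fields, the differential of $\mathcal{F}$ at $(p,t_0)$ has rank $n$. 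Here one must check that the Jacobi fields coming from the $(n-1)$-dimensional variation, evaluated at $t_0$, together with $\gamma_p'(t_0)$, span $T_q M$; this is exactly the statement that $t_0$ is not a focal instant. A cleaner way to see this without worrying about focal instants directly: if $q$ were not in the image of $\mathcal{F}$, one could still reach $q$ by a different argument, so instead I would handle boundary points separately.

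For a point $q \in \partial M$: every point of $\Sigma_0$ is of the form $\gamma_p(0) = \mathcal{F}(p,0)$, and if $\partial M$ is connected then $\partial M = \Sigma_0$ and every boundary point is covered; if $\partial M$ has two components then, by Proposition~\ref{thm:atmost2conncomp}, the map $p \mapsto \gamma_p(2L)$ carries $\Sigma_0$ diffeomorphically onto the other component, so that component equals $\Sigma_{2L}$ and again every boundary point is in the image. For an interior point $q \in \intt M$ lying on some $\gamma_p$ (which holds precisely for points in the image), the submersion argument above shows a full neighborhood of $q$ in $\intt M$ is covered; so the image, intersected with $\intt M$, is open in $\intt M$. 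Combining: the image is closed in $M$, contains $\partial M$, and is open relative to $\intt M$ at each of its interior points, so its complement is an open subset of $\intt M$; but the complement is also closed in $M$ (being the complement of a closed set), hence it is a union of connected components of $M \setminus (\text{image})$ that is both open and closed in $M$; since $M$ is connected and the image is nonempty, the complement is empty.

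The main obstacle is the submersion claim at interior points, i.e., verifying that no interior instant $t_0 \in {]0,2L[}$ along $\gamma_p$ is "totally focal" in the sense that the evaluation map $\mathbb{J}_{\gamma_p} \ni J \mapsto J(t_0)$ fails to be surjective even after adjoining $\gamma_p'(t_0)$. One resolves this by noting that the Jacobi fields in the $(n-1)$-parameter variation by free boundary geodesics, together with the tangential field $t\gamma_p'(t)$, form an $n$-dimensional space of Jacobi fields along $\gamma_p$ whose values at any $t_0$ where $\det$ of the evaluation could degenerate form a proper subspace only at isolated instants — and at such an isolated focal instant one still has to argue openness. The honest fix is that it suffices to prove the image is open at points on the \emph{complement} of the focal set, which is dense in each geodesic, and then use that the image is closed to absorb the focal points; since the focal instants are isolated, the set of non-focal interior points of $\bigcup_t \Sigma_t$ is dense in $\bigcup_t \Sigma_t$, its closure is $\bigcup_t \Sigma_t$, and the openness of $\mathcal{F}$ there suffices to conclude via connectedness as above.
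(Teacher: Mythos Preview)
Your open--closed argument has a genuine gap, and the paper's proof avoids the whole issue with a two-line idea you have overlooked.

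First, a slip: you write ``the complement is also closed in $M$ (being the complement of a closed set)''. The complement of a closed set is open, not closed; so at that point you have only shown that $M\setminus\mathrm{Im}(\mathcal F)$ is open, which you already knew.

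More seriously, the ``honest fix'' does not work. You establish that the image $A=\mathcal F(\Sigma_0\times[0,2L])$ is closed and that the set $B$ of non-focal interior image points is open in $M$ and dense in $A$. From this you want to conclude $A=M$ by connectedness. But a closed set containing an open set dense in itself need not be open: take $A$ the closed unit disk and $B$ the open unit disk in $\Rbb^2$. So the connectedness argument cannot close until you actually show that a neighborhood of each \emph{focal} point lies in $A$, and nothing in your write-up does this. (Sard's theorem tells you the focal set has measure zero, but measure-zero sets can still be topological boundaries of open sets.)

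The paper bypasses all of this with an elementary variational observation: given any $q\in M$, pick $p\in\partial M$ realizing $\dist(q,\partial M)$ (which exists by compactness). The minimizing segment from $p$ to $q$ meets $\partial M$ orthogonally at $p$ by the first variation formula, hence coincides with the initial portion of the free boundary geodesic $\gamma_p$; thus $q=\gamma_p\big(\dist(q,\partial M)\big)$. No submersion, focal-point, or open--closed analysis is needed.
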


\begin{proof}
	Let $\Sigma$ be a boundary component of a Zoll manifold $M$ with boundary. For a given $q \in M$, one can find $p \in \Sigma$ minimizing the distance from $q$ to $\Sigma$. Thus, the free boundary geodesic starting at $p$ passes through $q$.
\end{proof}

\subsection{Zoll manifolds with disconnected boundary}
Let us start with the study of the geometry of Zoll manifolds with disconnected boundary.

\begin{theorem}\label{Zolltwoboundaries}
	Let $(M,g)$ be a Zoll manifold with two boundary components whose free boundary geodesics all have length $2L$. Then, all of its free boundary geodesics are embedded. Moreover, $M \simeq [0, 2L] \times \Sigma$, where $\Sigma$ is a connected component of $\partial M$, and $g\simeq \mathrm dt^2\oplus g_t$, where $[0,2L]\ni t\mapsto g_t$ is a smooth one-parameter family of Riemannian metrics on $\Sigma$.
\end{theorem}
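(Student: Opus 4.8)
The plan is to show that the surjective map $\mathcal F\colon \Sigma_0\times[0,2L]\to M$, $\mathcal F(p,t)=\gamma_p(t)$, is a diffeomorphism; once this is established, the form of the metric and the embeddedness of the free boundary geodesics will follow by a short Fermi-coordinates computation.

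\emph{Step 1: $\mathcal F$ is a local diffeomorphism.} Since domain and target are both $n$-dimensional manifolds with boundary, it suffices to check that $\mathrm d\mathcal F_{(p,t)}$ is injective. Given $(v,a)\in T_p\Sigma_0\oplus\mathbb R$, one has $\mathrm d\mathcal F_{(p,t)}(v,a)=J_v(t)+a\,\gamma_p'(t)$, where $J_v\in\mathbb J_{\gamma_p}$ is the Jacobi field obtained by varying the initial point of $\gamma_p$ among free boundary geodesics, so that $J_v(0)=v$. Exactly as in the proof of Proposition~\ref{thm:Lp_loc_constant}, $g(J_v,\gamma_p')$ is constant (all the varied geodesics are unit speed) and vanishes at $t=0$ because $v\perp\nu_p$; hence $g(J_v,\gamma_p')\equiv 0$. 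Pairing $J_v(t)+a\,\gamma_p'(t)=0$ with $\gamma_p'(t)$ forces $a=0$, and then $J_v(t)=0$; for $t\in\left]0,2L\right]$ the absence of $\partial M$-focal points in $M$ (Corollary~\ref{thm:minimizer} and the remark following it) makes $\mathbb J_{\gamma_p}\ni J\mapsto J(t)$ injective, hence $J_v=0$ and $v=0$; for $t=0$ the conclusion $v=0$, $a=0$ is immediate from $v+a\nu_p=0$ with $v\perp\nu_p$.

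\emph{Step 2: $\mathcal F$ is injective.} Write $C_0,C_1$ for the two boundary components. By Corollary~\ref{thm:minimizer} every $\gamma_p$ realizes $\mathrm{dist}(C_0,C_1)=2L$; restricting to $[0,t]$ and $[t,2L]$ and noting that a strict shortcut on either side would yield a path from $C_0$ to $C_1$ of length $<2L$, we get $\mathrm{dist}(\gamma_p(t),C_0)=t$ and $\mathrm{dist}(\gamma_p(t),C_1)=2L-t$. In particular $\mathrm{dist}(\cdot,C_0)$ equals $t$ on $\Sigma_t$, so $\gamma_p(t)=\gamma_q(s)$ forces $s=t$. Setting $x=\gamma_p(t)=\gamma_q(t)$, the concatenation of $\gamma_p\big\vert_{[0,t]}$ with $\gamma_q\big\vert_{[t,2L]}$ is a unit-speed path from $C_0$ to $C_1$ of length exactly $2L=\mathrm{dist}(C_0,C_1)$, hence a minimizing geodesic, hence smooth at $x$; therefore $\gamma_p'(t)=\gamma_q'(t)$, and uniqueness of geodesics gives $\gamma_p=\gamma_q$, i.e.\ $p=q$. (The cases $t\in\{0,2L\}$ are covered directly using that $\mathcal F(\cdot,0)=\mathrm{id}_{C_0}$ and that $\mathcal F(\cdot,2L)$ is the diffeomorphism $C_0\to C_1$ of Proposition~\ref{thm:atmost2conncomp}.) Since $\mathcal F$ is a continuous bijection from a compact space onto a Hausdorff space and is a local diffeomorphism, it is a diffeomorphism.

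\emph{Step 3: metric and embeddedness.} Pulling $g$ back by $\mathcal F$ in coordinates $(p,t)$: the $\partial_t$-direction is the unit field $\gamma_p'$, so the $\mathrm dt^2$-coefficient is $1$; the mixed terms are $g(\gamma_p',J_{v})=0$ by the identity of Step~1; and $g(J_{v_i}(t),J_{v_j}(t))$ defines a smooth one-parameter family of metrics $g_t$ on $\Sigma_0$. Hence $\mathcal F^*g=\mathrm dt^2\oplus g_t$, giving $M\simeq[0,2L]\times\Sigma_0$ with the stated metric. Finally, each free boundary geodesic is $\mathcal F(p,\cdot)$, an injective immersion on a compact interval, hence an embedding.

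I expect the main obstacle to be Step~2: specifically, the verification that a path from $C_0$ to $C_1$ of minimal length $2L$ is automatically a smooth geodesic at every interior point (so that the two half-geodesics glue smoothly), which is where the minimizing property supplied by Corollary~\ref{thm:minimizer} is indispensable; everything else is either a dimension count or a standard Jacobi-field/Gauss-lemma manipulation.
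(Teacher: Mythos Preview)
Your proof is correct. The overall architecture matches the paper's: show $\mathcal F$ is a diffeomorphism using the minimizing property of the free boundary geodesics (Corollary~\ref{thm:minimizer}) and then read off the metric form.

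The one place where your route genuinely diverges is the injectivity argument in Step~2. The paper first argues that each $\Sigma_t$ is a smooth hypersurface (no focal points) and that every $\gamma_p$ meets $\Sigma_t$ orthogonally; injectivity then follows because two distinct free boundary geodesics meeting at a point of $\Sigma_{t_0}$ would give two distinct unit normals to a hypersurface. You instead concatenate $\gamma_p\vert_{[0,t]}$ with $\gamma_q\vert_{[t,2L]}$, observe this has length $2L=\mathrm{dist}(C_0,C_1)$, and invoke the standard fact that a length-minimizer has no corners. Both arguments are short and rest on the same input (Corollary~\ref{thm:minimizer}); yours avoids having to say explicitly why the geodesics are orthogonal to $\Sigma_t$, while the paper's avoids the first-variation-at-a-corner step you flagged. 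Your explicit verification that $\mathcal F$ is a local diffeomorphism (Step~1), and your derivation of the cross terms of the metric via $g(J_v,\gamma_p')\equiv 0$, are slightly more explicit than the paper's treatment, which passes directly from bijectivity to diffeomorphism and checks the metric via Christoffel symbols.
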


\begin{proof}
	Since the free boundary geodesics minimize length between $\Sigma = \Sigma_0$ and $\Sigma_{2L}$, they also minimize length between $\Sigma_0$ and $\Sigma_t$ for all $t \in\ ]0,2L[$. In particular, each $\Sigma_t$ is a hypersurface of $M$ because $\Sigma_0$ has no focal points in $M$, and consequently any free boundary geodesic is embedded and orthogonal to $\Sigma_t$.
	
	We are going to show that $\mathcal{F}$ is a bijection. By Proposition \ref{surjection}, all we need to verify is that $\mathcal{F}$ is injective. If $\mathcal{F}(x_1,t_1) = \mathcal{F}(x_2,t_2)$, then $\gamma_{x_1}(t_1) \in \Sigma_{t_2}$ and consequently $t_1 = t_2 = t_0$. Moreover, if $x_1 \neq x_2$, we must have $\gamma_{x_1}(t_0) = \gamma_{x_2}(t_0) = p$ and $\gamma'_{x_1}(t_0) \neq \gamma'_{x_2}(t_0)$, which is a contradiction because these vectors are orthogonal to the hypersurface $\Sigma_{t_0}$ at $p$. Therefore, $M$ is diffeomorphic to $\Sigma \times [0,2L]$.
	
	As for the metric $g$, we first observe that writing $(t,x) = (x^0, x^1, \ldots, x^n)$ in local coordinates, we have
	\[g_{0i} = g\left(\frac{\partial \mathcal{F}}{\partial t}, \frac{\partial \mathcal{F}}{\partial x^i}\right) =
	\begin{cases}
		1,\ &\text{if}\ i = 0\\
		0,\ &\text{if}\ i = 1, \ldots, n
	\end{cases}\]
	by the orthogonality condition. Furthermore, by the geodesic equations, $\gamma_x(t) = (t, x^1, \ldots, x^n)$ is a geodesic of $M$ if and only if $\Gamma_{00}^k = 0$ for all $k = 0, \ldots, n$. Here, $\Gamma_{ij}^k$ denotes the Christoffel symbols of $g$. But 
	\[\Gamma_{00}^k = \frac{1}{2}\sum_{\ell=0}^n \left(2\frac{\partial}{\partial t}g_{0\ell} - \frac{\partial}{\partial x^\ell}g_{00}\right)g^{k\ell} = 0,\]
	which proves our assertion.
\end{proof}
\subsection{Zoll manifolds with connected boundary}
We now consider Zoll manifolds with connected boundary. Set $\Sigma = \partial M$, so that $\mathcal{F} :  \partial M \times [0, 2L] \to M$ and $\Sigma_t = \mathcal{F}(\partial M, t)$. Before proceeding, we need the following auxiliary results.

\begin{lemma}\label{distance}
	Let $(M,g)$ be a Zoll manifold with connected boundary whose free boundary geodesics all have length $2L$. Then $\Sigma_t = \Sigma_{2L-t}$ for all $t \in [0,2L]$. Moreover, the following assertions hold.
	\begin{itemize}
		\item[$\mathrm{(i)}$] If $ t \leq L$, then $\dist(\Sigma_t, \partial M) = t$.
		\item[$\mathrm{(ii)}$] If $t,s \leq L$ and $\Sigma_t \cap \Sigma_s \neq \emptyset$, then $t = s$.
	\end{itemize}
\end{lemma}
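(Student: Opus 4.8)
The plan is to exploit the symmetry of a free boundary geodesic $\gamma_p\colon[0,2L]\to M$: since $(M,g)$ is Zoll with boundary, $\gamma_p(2L)$ also lies on $\partial M=\Sigma_0$ (as $\partial M$ is connected) and $\gamma_p$ arrives there orthogonally. Hence the curve $t\mapsto\gamma_p(2L-t)$ is again a unit-speed free boundary geodesic issuing orthogonally from $\partial M$, namely $\gamma_{\gamma_p(2L)}$. Applying the map $\mathcal{F}$ this shows immediately that $\mathcal{F}(\Sigma_0,2L-t)=\mathcal{F}(\Sigma_0,t)$, i.e. $\Sigma_{2L-t}=\Sigma_t$ for all $t$. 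In particular $\Sigma_L$ is the set of midpoints of the free boundary geodesics, consistent with the introduction.

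For $\mathrm{(i)}$, fix $t\le L$ and $q\in\Sigma_t$, so $q=\gamma_p(t)$ for some $p\in\partial M$. The segment $\gamma_p|_{[0,t]}$ has length $t$, giving $\dist(q,\partial M)\le t$. For the reverse inequality I would argue that any path from $q$ to $\partial M$ of length $<t$ would yield, concatenated appropriately, a ``short'' path contradicting the minimizing behaviour already established; more precisely, I would take a minimizing geodesic $\sigma$ from $q$ to $\partial M$, realizing $\dist(q,\partial M)=:d\le t$, which meets $\partial M$ orthogonally (first-variation), so $\sigma$ is (a sub-arc of) a free boundary geodesic $\gamma_{p'}$ with $\gamma_{p'}(d)=q$ — here one uses that a geodesic hitting $\partial M$ orthogonally is the initial portion of a free boundary geodesic, by the defining property of Zoll manifolds with boundary and the unique continuation of geodesics in an extension. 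Then $q\in\Sigma_d\cap\Sigma_t$ with $d,t\le L$, so by $\mathrm{(ii)}$ we get $d=t$. Thus $\mathrm{(i)}$ follows from $\mathrm{(ii)}$, and it remains to prove $\mathrm{(ii)}$.

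For $\mathrm{(ii)}$, suppose $t,s\le L$ and $q\in\Sigma_t\cap\Sigma_s$, say $q=\gamma_p(t)=\gamma_{p'}(s)$. Both $\gamma_p|_{[0,t]}$ and $\gamma_{p'}|_{[0,s]}$ are geodesics from $\partial M$ to $q$ hitting $\partial M$ orthogonally at the start. The key point is that each such segment is length-minimizing from $q$ to $\partial M$: indeed, if not, there would be a $\partial M$-focal point along it strictly before the endpoint, but by Theorem~\ref{thm:sameMorseindex1} (Proposition~\ref{thm:maxdeg}/\ref{thm:sameMorseindex}) the only focal instant along a full free boundary geodesic $\gamma_p\colon[0,2L]\to M$ is at the midpoint $t=L$ (with multiplicity $k=$ the index), so for $t<L$ there are no focal points in $[0,t]$ at all, and the segment minimizes up to and including $t=L$ by a limiting argument. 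Consequently $t=\dist(q,\partial M)=s$. I would phrase this carefully so that the case $t=L$ or $s=L$ is included: at $t=L$ the segment is still minimizing (minimizing segments persist in the limit, or equivalently a focal point only at the endpoint does not destroy minimality), which is exactly why $\Sigma_L$ meets no other $\Sigma_s$ with $s<L$.

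The main obstacle I anticipate is the focal-point input: I need that along $\gamma_p\colon[0,2L]\to M$ the first $\partial M$-focal instant is exactly $t=L$ and nothing earlier, together with the standard fact that a geodesic segment from a submanifold to a point with no interior focal points is locally minimizing, and globally minimizing once one knows (as here) that $\dist(q,\partial M)$ is realized by such a segment. This is where Proposition~\ref{thm:maxdeg} and Corollary~\ref{thm:minimizer} (for the disconnected case, vacuously) feed in; for the connected case one must observe that the family of free boundary geodesics through a fixed midpoint degenerates precisely at $t=L$, forcing the focal instant there. Once this is in hand, parts $\mathrm{(i)}$ and $\mathrm{(ii)}$ are short, and the identity $\Sigma_t=\Sigma_{2L-t}$ is essentially immediate from the reversibility of free boundary geodesics.
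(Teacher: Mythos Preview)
Your argument for $\Sigma_t=\Sigma_{2L-t}$ via the reversal $t\mapsto\gamma_p(2L-t)=\gamma_{p'}(t)$, $p'=\gamma_p(2L)$, is correct and is exactly what the paper does.

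The structure of your treatment of (i) and (ii) is inverted relative to the paper. The paper proves (i) directly---a minimizing geodesic from $\partial M$ to $\Sigma_t$ is orthogonal to $\partial M$, hence is an initial segment of some $\gamma_p$, and then the symmetry $\Sigma_t=\Sigma_{2L-t}$ with $t\le L$ is used to identify its length as $t$---and deduces (ii) from (i): once $\dist(\Sigma_t,\partial M)=t$, every point of $\Sigma_t$ has distance exactly $t$ from $\partial M$, so $\Sigma_t\cap\Sigma_s\ne\emptyset$ forces $t=s$. No focal-point input is used at this stage; on the contrary, Lemma~\ref{focalset} (focal set $\subset\Sigma_L$) is \emph{derived from} Lemma~\ref{distance}.

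Your route through (ii) has a real gap, and it is precisely the obstacle you flag. You want $\gamma_p|_{[0,t]}$ to realize $\dist(\gamma_p(t),\partial M)$ for $t\le L$, and you justify this by ``no $\partial M$-focal instants before $L$''. Two problems:
\begin{itemize}
\item[(a)] \emph{Circularity.} The statement that all $\partial M$-focal instants along $\gamma_p$ occur at $t=L$ is Lemma~\ref{focalset}, whose proof uses Lemma~\ref{distance}. Propositions~\ref{thm:maxdeg} and~\ref{thm:sameMorseindex} only say that the total focal count (with multiplicity) equals the common index $k$; they do not locate the focal instants. Your remark that ``the family of free boundary geodesics through a fixed midpoint degenerates precisely at $t=L$'' would at best exhibit a focal contribution at $t=L$, not exclude earlier ones; and for $k>0$ it already presupposes that several free boundary geodesics share a midpoint, which is part of what this lemma is meant to set up.
\item[(b)] \emph{Focal-free does not imply minimizing.} Even granting that $(0,L)$ contains no focal instant, the implication ``not minimizing $\Rightarrow$ focal point strictly before the endpoint'' is false. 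Absence of focal points yields only local minimality among nearby curves; the cut instant from $\partial M$ can occur strictly before the first focal instant, via a second normal geodesic from a different boundary point reaching the same interior point with equal (or shorter) length. You would have to rule out that alternative separately, and nothing in Section~\ref{sec:degMorseInd} does so.
\end{itemize}
So as written, your plan for (ii) does not close, and since your (i) rests on (ii), neither does (i). The paper's order---(i) first, by a direct minimizing-geodesic argument, then (ii)---avoids invoking the focal structure altogether.
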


\begin{proof}
	Let $p' = \gamma_p(2L)$. Notice that $2L = 2t + \ell_{p,t}$, where $\ell_{p,t}$ denotes the length of $\gamma_p$ between $\gamma_p(t)$ and $\gamma_{p'}(t)$ for $t \leq L$. Hence, $t + \ell_{p,t} = 2L - t$, which means that $\gamma_p(t) = \gamma_{p'}(2L-t)$ and $\gamma_{p'}(t) = \gamma_p(2L-t)$. Therefore, $\Sigma_t = \Sigma_{2L-t}$.
	
	Let $d = \dist(\Sigma_t, \partial M)$. There exists $p \in \partial M$ and a unit-speed geodesic $\sigma_p: [0,d] \to M$ realizing the distance between $\partial M$ and $\Sigma_t$. By minimality, $\sigma_p$ is orthogonal to $\partial M$. Hence, since $\Sigma_t = \Sigma_{2L-t}$ and $t \leq L$, we get $d = \mathrm{length}(\sigma_p) = t$.
	
	As a consequence, each point of $\Sigma_t$ realizes the distance between itself and $\partial M$. Thus, if there is $q \in \Sigma_{t_1} \cap \Sigma_{t_2}$, we get
	\[t_i = \dist(\Sigma_{t_i}, \partial M) = \dist(q, \partial M)\]
	for $i = 1,2$, which shows that $t_1 = t_2$.
\end{proof}

Since the set of midpoints of the free boundary geodesics of a Zoll manifold with connected boundary will play an important role in our study, we give the following definition.

\begin{definition}
	Given a Zoll manifold $(M,g)$ with boundary whose free boundary geodesics all have length $2L$, we call $\Sigma_L$ its {\it soul}.
\end{definition}

The first property of the soul of a Zoll manifold with boundary is that it contains all focal points of the free boundary geodesics.

\begin{lemma}\label{focalset}
	Let $(M,g)$ be a Zoll manifold with connected boundary. Then the focal set of $\partial M$ is contained in the soul of $M$.
\end{lemma}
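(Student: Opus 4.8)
The plan is to show that any $\partial M$-focal point along a free boundary geodesic $\gamma_p$ must occur exactly at the midpoint $\gamma_p(L)$, so that the focal set is contained in $\Sigma_L$. First I would recall from Proposition~\ref{thm:maxdeg} that every free boundary geodesic is maximally degenerate, meaning the space $\mathbb J_\gamma^\parallel$ of Jacobi fields satisfying the initial free boundary condition \eqref{eq:freeboundJacobi1} together with $J(2L) \in T(\partial M)$ has maximal dimension $n-1$, and in fact coincides with the full kernel of the index form (the space of free boundary Jacobi fields). Reparametrizing so that $\gamma = \gamma_p$ is defined on $[0,2L]$, the key point is that there is an $(n-1)$-parameter family of free boundary geodesics through $\gamma$, obtained by moving the initial endpoint along $\partial M$; the associated variational fields furnish, for \emph{each} value of the parameter, a free boundary Jacobi field that is automatically tangent to $\partial M$ at \emph{both} endpoints.

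The main step is a symmetry argument exploiting the involutive structure already observed in Lemma~\ref{distance}. By that lemma, $\Sigma_t = \Sigma_{2L-t}$, and the map $p \mapsto \gamma_p(2L) =: p'$ is a well-defined map $\partial M \to \partial M$ with $\gamma_{p'}(t) = \gamma_p(2L-t)$; in particular $\gamma_{p'}$ is just $\gamma_p$ traversed backwards. I would use this to show that the focal instants of $\gamma_p$ (measured from $p$) and those of $\gamma_{p'}$ (measured from $p'$) must coincide as sets — because $\mathbb J_{\gamma_p}^\parallel$ and $\mathbb J_{\gamma_{p'}}^\parallel$ are carried to one another by $J(t) \mapsto J(2L-t)$, which is an isomorphism intertwining the two focalization maps. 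Concretely, if $t_0 \in\ ]0,2L[$ is focal along $\gamma_p$ with some multiplicity $m$, then $2L - t_0$ is focal along $\gamma_p$ as well (reading it as a focal instant of $\gamma_{p'}$ reflected back), with the same multiplicity.

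To upgrade "symmetric about $L$" to "concentrated at $L$" I would invoke the structure of the geometry near a focal point together with the minimization property. For $t < L$, Lemma~\ref{distance}(i) gives $\dist(\Sigma_t, \partial M) = t$, and each point of $\Sigma_t$ realizes this distance; hence the segment $\gamma_p|_{[0,t]}$ is a minimizing geodesic from $\partial M$ to $\Sigma_t$, and in particular from $\partial M$ to $\gamma_p(t)$. A minimizing geodesic from a hypersurface to a point cannot contain a focal point of that hypersurface in its interior (this is the standard fact that focal points occur at or after the cut locus along the normal geodesic). Therefore no $t \in\ ]0, L[$ can be focal along $\gamma_p$, and by the reflection symmetry no $t \in\ ]L, 2L[$ can be focal either. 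Thus the only possible $\partial M$-focal instant along $\gamma_p$ is $t = L$, i.e. the focal point, if any, is $\gamma_p(L) \in \Sigma_L$. Since by Proposition~\ref{surjection} every point of $M$ lies on some free boundary geodesic, this shows the focal set of $\partial M$ is contained in the soul $\Sigma_L$.

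The step I expect to be the main obstacle is making the reflection argument fully rigorous at the level of the free boundary condition: one must check that the involution $J(t) \mapsto \widetilde J(t) := J(2L-t)$ really does send free boundary Jacobi fields along $\gamma_p$ to free boundary Jacobi fields along $\gamma_{p'}$, including the correct transformation of the second-fundamental-form term in \eqref{eq:freeboundJacobi1}--\eqref{eq:freeboundJacobi2} under reversal of orientation of the geodesic. This is where one uses that $\gamma_{p'}'(t) = -\gamma_p'(2L-t)$, so $\mathcal S_{\gamma_{p'}'(0)} = -\mathcal S_{\gamma_p'(2L)}$, and the sign works out because the covariant derivative $J'$ also picks up a sign under the reflection. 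Once this bookkeeping is done, everything else is either a direct appeal to Lemma~\ref{distance}, Proposition~\ref{thm:maxdeg}, or the classical minimizing-implies-no-interior-focal-point lemma.
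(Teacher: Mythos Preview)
Your proof is correct, and its core step coincides with the paper's: by Lemma~\ref{distance}(i) each segment $\gamma_p|_{[0,t]}$ for $t<L$ minimizes distance from $\partial M$, so there can be no $\partial M$-focal instant in $\left]0,L\right[$. The paper's proof stops there (implicitly invoking $\Sigma_t=\Sigma_{2L-t}$ for the other half), while you make the symmetry explicit.

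Where you differ is in how you justify the reflection $t_0\leftrightarrow 2L-t_0$. You route it through maximal degeneracy (Proposition~\ref{thm:maxdeg}) so that any focal Jacobi field in $\mathbb J_{\gamma_p}$ automatically satisfies the terminal condition and hence reflects to a field in $\mathbb J_{\gamma_{p'}}$. This works, but it is heavier than necessary: since $\gamma_{p'}(s)=\gamma_p(2L-s)$ and the involution $\mathcal I\colon p\mapsto p'$ is a diffeomorphism of $\partial M$, one has $\mathcal F(p,t)=\mathcal F(\mathcal I(p),2L-t)$, so the critical set of the normal exponential map $\mathcal F$ is invariant under $(p,t)\mapsto(\mathcal I(p),2L-t)$ without any appeal to the index form. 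Your approach does buy something, though: it makes transparent why the focal multiplicity at $t_0$ from $p$ equals that at $2L-t_0$ from $p'$, which feeds directly into the later computation of the rank of $\mathrm d\mathcal F_L$ in Proposition~\ref{SigmaL}.
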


\begin{proof}
	Since every free boundary geodesic of $M$ minimizes the distance between $\partial M$ and its soul by Lemma \ref{distance}, the map $\mathcal{F}_t: \partial M \to \Sigma_t$ given by $\mathcal{F}_t(p) = \mathcal{F}(p,t)$ is a diffeomorphism for all $t < L$.
\end{proof}

\begin{theorem}
	\label{Zolloneboundary}
	Let $(M,g)$ be a Zoll manifold with connected boundary whose free boundary geodesics all have length $2L$. Then all of its free boundary geodesics are embedded. Moreover, $M\setminus\Sigma_L$ is foliated by $\Sigma_t$, and it is diffeomorphic to the product $\left[0,L\right[\times\partial M$. On $M\setminus\Sigma_L\cong\left[0,L\right[\times\partial M$, the metric takes the form $g \simeq \mathrm dt^2\oplus g_t$, where $\left[0,L\right[\ni t\mapsto g_t$ is a smooth one-parameter family of metrics on $\partial M$.
\end{theorem}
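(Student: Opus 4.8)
The plan is to prove everything at once by showing that $\mathcal{F}$ restricts to a diffeomorphism $\partial M\times\left[0,L\right[\to M\setminus\Sigma_L$. Once this is in place, the foliation by the hypersurfaces $\Sigma_t$ is just the image of the product foliation of $\partial M\times\left[0,L\right[$, the metric form $\mathrm dt^2\oplus g_t$ comes from the orthogonality built into the free boundary condition, and embeddedness of the free boundary geodesics drops out of the injectivity statement together with a short symmetry argument.

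First I would recall from the proof of Lemma~\ref{focalset} that $\mathcal{F}_t\colon\partial M\to\Sigma_t$, $\mathcal{F}_t(p)=\gamma_p(t)$, is a diffeomorphism for every $t\in\left[0,L\right[$: by Lemma~\ref{distance}(i) the segment $\gamma_p\vert_{[0,t']}$ realizes $\dist(\partial M,\cdot)$ for all $t'\le L$, so it carries no $\partial M$-focal point in its interior, which forces $\mathcal{F}_t$ to be an immersion for $t<L$, and injectivity follows as in that proof. Consequently each $\Sigma_t$ with $t<L$ is an embedded hypersurface, $\gamma_p'(t)$ is a unit vector orthogonal to $\Sigma_t$ at $\gamma_p(t)$, and the differential of $\mathcal{F}$ at any $(p,t)\in\partial M\times\left[0,L\right[$ is an isomorphism: its $t$-derivative is the unit normal $\gamma_p'(t)$ and its $\partial M$-derivatives span $T_{\gamma_p(t)}\Sigma_t$ (they are precisely $\mathrm d\mathcal{F}_t$). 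Hence $\mathcal{F}$ is an immersion on $\partial M\times\left[0,L\right[$, respecting boundaries since $\Sigma_0=\partial M$.

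Next I would check global injectivity: if $\mathcal{F}(p,t)=\mathcal{F}(q,s)$ with $t,s<L$ then $\Sigma_t\cap\Sigma_s\ne\emptyset$, so $t=s$ by Lemma~\ref{distance}(ii), and then $p=q$ since $\mathcal{F}_t$ is injective. A bijective immersion between equidimensional manifolds is an open map, hence a diffeomorphism onto its open image; to see that this image is exactly $M\setminus\Sigma_L$, take $q\in M$, write $q=\gamma_p(t)$ using Proposition~\ref{surjection}, and use $\Sigma_t=\Sigma_{2L-t}$ together with the fact that $s\mapsto\gamma_p(2L-s)$ is the free boundary geodesic through $p':=\gamma_p(2L)$ to reduce to $t\in[0,L]$: if $t<L$ then $q$ lies in the image, and if $t=L$ then $q\in\Sigma_L$, which by Lemma~\ref{distance}(ii) meets no $\Sigma_t$ with $t<L$. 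Thus $\mathcal{F}\colon\partial M\times\left[0,L\right[\to M\setminus\Sigma_L$ is a diffeomorphism, the $\Sigma_t$ with $t<L$ form the asserted foliation, and in the resulting coordinates $\gamma_p'(t)\perp\Sigma_t$ and $g(\gamma_p'(t),\gamma_p'(t))=1$ give $g=\mathrm dt^2\oplus g_t$, with $t\mapsto g_t$ smooth because $\mathcal{F}$ is smooth.

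It remains to prove embeddedness of a free boundary geodesic $\gamma_p\colon[0,2L]\to M$. Injectivity of $\gamma_p$ on $\left[0,L\right[$ and on $\left]L,2L\right]$ follows from the injectivity of $\mathcal{F}$ on $\partial M\times\left[0,L\right[$ (reparametrizing the second half from $p'$); the value $\gamma_p(L)\in\Sigma_L$ cannot coincide with any $\gamma_p(t)$, $t\ne L$, since $\Sigma_L$ is disjoint from the other slices; and the only remaining possibility, $\gamma_p(t)=\gamma_p(2L-t)$ with $t<L$, would mean $\mathcal{F}_t(p)=\mathcal{F}_t(p')$, hence $p=p'$, so that $\sigma(s):=\gamma_p(2L-s)$ and $\gamma_p$ share initial point and velocity, giving $\gamma_p(2L-s)\equiv\gamma_p(s)$ and, differentiating at $s=L$, $\gamma_p'(L)=0$, which contradicts unit speed. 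Therefore $\gamma_p$ is an injective immersion of a compact interval, hence an embedding. The step I expect to demand the most care is not any single estimate but the bookkeeping in the third paragraph — identifying the image of $\mathcal{F}\vert_{\partial M\times\left[0,L\right[}$ with $M\setminus\Sigma_L$ and ruling out every kind of self-intersection, in particular the case $p=p'$ — which is exactly where Lemma~\ref{distance} must be applied carefully.
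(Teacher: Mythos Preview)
Your argument is correct and complete; indeed, your embeddedness argument is more explicit than the paper's. The only organizational difference is that the paper shortcuts by truncating $M$ at $\Sigma_s$ for $s<L$, observing that the resulting $(M_s,g_s)$ is a Zoll manifold with two boundary components, and then invoking Theorem~\ref{Zolltwoboundaries} to get embeddedness of $\gamma_p\vert_{[0,s]}$ and the product structure on $M_s\cong[0,s]\times\partial M$ for every $s<L$; letting $s\to L$ yields the statement. You instead bypass Theorem~\ref{Zolltwoboundaries} and argue directly from Lemmas~\ref{distance} and~\ref{focalset} that $\mathcal F$ is an injective immersion on $\partial M\times\left[0,L\right[$, then carry out the case analysis for embeddedness of the full geodesic by hand (including the $p=p'$ case, which the paper leaves implicit). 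Both routes rest on the same two lemmas; the paper's is shorter because the work was already packaged in the two-boundary theorem, while yours is self-contained and makes the injectivity on $[0,2L]$ fully transparent.
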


\begin{proof}
	For any $s < L$, let $M_s$ denote the region of $M$ bounded by $\partial M$ and $\Sigma_s$, and $g_s = g|_{M_s}$. By Lemmas \ref{distance} and \ref{focalset}, we obtain that $(M_s,g_s)$ is a Zoll manifold with two boundary components that converges to $(M,g)$ as $s \to L$. It follows from Theorem \ref{Zolltwoboundaries} that each free boundary geodesic of $M_s$ is embedded, and consequently the same holds for the free boundary geodesics of $M$. Using the fact that $\Sigma_t$ is a smooth embedded hypersurface of $M$ for $t\in \left[0,L\right[$, one easily obtains that $\mathcal{F}_t\colon\partial M\to\Sigma_t$ is a diffeomorphism for all $t\in \left[0,L\right[$, that $M\setminus\Sigma_L\cong\left[0,L\right[\times\partial M$, and that the metric takes the form $g \simeq \mathrm dt^2\oplus g_t$, where $g_t$ is the metric on $\partial M$ given by the pullback of $g$ by $\mathcal{F}_t$.
\end{proof}

\section{The projection map}\label{projection}

This section is devoted to studying the properties of the natural projection $\mathcal{F}_L : \partial M\to\Sigma_L$ onto the soul. Denote by $\mathcal I\colon\partial M\to\partial M$ the smooth involution $p\mapsto\exp_p(2L\cdot\nu_p)$. Note that $\mathcal{F}_L$ is never injective, since $\mathcal{F}_L(p)=\mathcal{F}_L\big(\mathcal I(p)\big)$. For $p\in\partial M$, $\mathrm d\mathcal{F}_L(p)$ has a kernel whose dimension is equal to the multiplicity (possibly zero) of the focal instant $t=L$ along the free boundary geodesic $[0,2L]\ni t\mapsto\exp_p(t\cdot\nu_p)$. Using the constant rank theorem, one can show that $\Sigma_L$ is, in fact, a closed embedded smooth submanifold of $M$ whose dimension is related to the index of $M$.

\begin{proposition}\label{SigmaL}
	Let $(M,g)$ be an $n$-dimensional Zoll manifold with connected boundary and index $k$. Then its soul is a closed connected embedded submanifold contained in the interior of $M$ and has dimension equal to $n-1-k$. Moreover, one of the following holds:
	\begin{itemize}
		\item[$\mathrm{(i)}$] either $k=0$ and $\mathcal{F}_L \colon \partial M \to \Sigma_L$ is a $2$-fold covering map, which is nontrivial if and only if $\partial M$ is connected;
		\item[$\mathrm{(ii)}$] or $k>0$ and $\mathcal{F}_L \colon \partial M \to \Sigma_L$ is a smooth fiber bundle whose fibers are $k$-spheres.
	\end{itemize}
\end{proposition}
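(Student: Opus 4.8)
The plan is to realize $M$ as a geodesic tube of radius $L$ around $\Sigma_L=\mathcal F_L(\partial M)$, in the spirit of the treatment of conjugate loci in Blaschke manifolds \cite[Ch.~5]{Besse}. First I would show that $\mathrm d\mathcal F_L$ has constant rank $n-1-k$. For $p\in\partial M$ write $\gamma_p(t)=\exp_p(t\cdot\nu_p)$. By Lemma~\ref{focalset} every $\partial M$-focal point along $\gamma_p$ lies on the soul; since $\gamma_p(t)\in\Sigma_t$, and $\Sigma_t\cap\Sigma_L\neq\emptyset$ forces $t=L$ by Lemma~\ref{distance} (using $\Sigma_t=\Sigma_{2L-t}$), while the endpoints $\gamma_p(0),\gamma_p(2L)\in\partial M$ avoid $\Sigma_L$ because $\dist(\Sigma_L,\partial M)=L>0$, the unique $\partial M$-focal instant along $\gamma_p$ is $t=L$. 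By Proposition~\ref{thm:maxdeg} the Morse index of $\gamma_p$ equals the multiplicity of that focal instant, and by Proposition~\ref{thm:sameMorseindex} it equals the index $k$; hence $t=L$ is focal of multiplicity exactly $k$ (multiplicity $0$ meaning $t=L$ is a regular instant, which matches $k=0$). Since $\dim\ker\mathrm d\mathcal F_L(p)$ equals that multiplicity, $\mathrm{rank}\,\mathrm d\mathcal F_L(p)=(n-1)-k$ for every $p$.

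Next, the constant rank theorem yields that each $\mathcal F_L^{-1}(q)$ is a closed, hence compact, embedded $k$-submanifold of $\partial M$, and that $\mathcal F_L(V_p)$ is an embedded $(n-1-k)$-disk through $q$ for $p\in\mathcal F_L^{-1}(q)$ and $V_p$ a small neighborhood. I would record two facts about $J\in\ker I_{\gamma_p}$. First, $g(J,\gamma_p')\equiv0$ on $\gamma_p$: $\bigl(g(J',\gamma_p')\bigr)'=g(\mathcal R_{\gamma_p}J,\gamma_p')=0$ by antisymmetry of the curvature operator, so $g(J,\gamma_p')$ is affine in $t$ and vanishes at $t=0,2L$ since there $\gamma_p'$ is normal while $J$ is tangent to $\partial M$; consequently $\mathrm d\mathcal F_L(p)(T_p\partial M)\perp\gamma_p'(L)$, as each of its vectors is $J(L)$ for some $J\in\ker I_{\gamma_p}$. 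Second, the map $\Lambda\colon\mathcal F_L^{-1}(q)\to\{v\in T_qM:|v|=1\}$, $\Lambda(p)=\gamma_p'(L)$, is an injective immersion: it is injective because a geodesic is determined by its velocity at $t=L$, and its derivative sends a tangent vector at $p$, represented by a variation through free boundary geodesics of midpoint $q$ with variation field $J\in\ker I_{\gamma_p}$ satisfying $J(L)=0$, to $J'(L)$, which is injective since $J(L)=J'(L)=0$ forces $J\equiv0$. As the source is compact, $\Lambda$ embeds $\mathcal F_L^{-1}(q)$ onto an embedded $k$-submanifold of the unit sphere of $T_qM$, invariant under $v\mapsto-v$ because $\gamma_{\mathcal I(p)}'(L)=-\gamma_p'(L)$.

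The main obstacle is to show $\Sigma_L$ is a closed embedded submanifold of dimension $n-1-k$: since a constant-rank map need not have embedded image, the geometry must be used to glue the slices $\mathcal F_L(V_p)$, i.e.\ to show the germ of $\Sigma_L$ at $q$ is independent of $p\in\mathcal F_L^{-1}(q)$, and that $\Lambda(\mathcal F_L^{-1}(q))$ is the unit sphere of a $(k+1)$-plane $W_q\subset T_qM$ (which will be $(T_q\Sigma_L)^\perp$). I expect this to be the technical core. My plan is to adapt the Blaschke-manifold argument: fix $p$, set $D=\mathcal F_L(V_p)$, and use its normal exponential map on a small neighborhood of the zero section of $\nu(D)$, a diffeomorphism onto a tube $\mathcal T$ of $D$; the derivative computation above shows that $p'\mapsto(\mathcal F_L(p'),\gamma_{p'}'(L))$ is a local diffeomorphism of $V_p$ onto an open subset of the unit normal bundle of $D$, and for $v\in\Lambda(\mathcal F_L^{-1}(q))$ the geodesic $s\mapsto\exp_q(sv)$ is part of a free boundary geodesic of midpoint $q$, because $\exp_q(Lv)=\gamma_p(2L)\in\partial M$ with $\gamma_v'(L)\perp\partial M$. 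Feeding this, together with the properness of $\mathcal F_L$ and the structure $M\setminus\Sigma_L\cong[0,L[\times\partial M$ near $\Sigma_L$, into the tube $\mathcal T$ should yield $\mathcal T\cap\Sigma_L=D$; patching over $q\in\Sigma_L$ then shows $\Sigma_L$ is a closed embedded submanifold of dimension $n-1-k$, contained in the interior of $M$ since $\dist(\Sigma_L,\partial M)=L>0$.

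Finally, with $\Sigma_L$ a submanifold, $\mathcal F_L\colon\partial M\to\Sigma_L$ is a submersion (its rank equals $\dim\Sigma_L$) and proper, hence a locally trivial fiber bundle by Ehresmann's theorem. Its fiber $\mathcal F_L^{-1}(q)$ is mapped by $\Lambda$ injectively into the unit sphere of $(T_q\Sigma_L)^\perp$, a $k$-sphere (here one uses $\gamma_p'(L)\perp T_q\Sigma_L=\mathrm d\mathcal F_L(p)(T_p\partial M)$ from the previous step). For $k>0$, $\Lambda$ is a local diffeomorphism of $k$-manifolds with compact domain, so its image is nonempty, open and closed in the connected $S^k$, hence all of it, and $\Lambda$ is a diffeomorphism; for $k=0$, $\Lambda$ is injective and intertwines $\mathcal I$ with $v\mapsto-v$, so its image is the whole $2$-point sphere. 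Thus every fiber is an embedded $S^k$, giving the $S^k$-bundle of (ii) when $k>0$, and a $2$-fold covering map when $k=0$; the latter is nontrivial since its total space $\partial M$ is connected over the connected base $\Sigma_L$, giving (i). In particular $\Sigma_L$, carrying a nontrivial double cover, cannot be simply connected when $k=0$. (For disconnected boundary one has $k=0$ by Corollary~\ref{thm:minimizer} and $\mathcal F_L$ is the trivial double cover of Theorem~\ref{Zolltwoboundaries}, consistent with the stated equivalence.)
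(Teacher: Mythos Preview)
Your proposal is correct and follows the Blaschke-manifold template more closely than the paper does. Both arguments open identically: $\mathcal F_L$ has constant rank $n-1-k$ because the unique $\partial M$-focal instant along each $\gamma_p$ is $t=L$, with multiplicity equal to the index $k$. The divergence is in how embeddedness of $\Sigma_L$ is obtained and in how the fibers are identified. For embeddedness the paper splits cases. When $k=0$ it argues directly that two local image-slices through a common point $q$ must coincide, since otherwise a point of one slice lying off the other would sit in $M\setminus\Sigma_L\cong[0,L[\times\partial M$ and hence have distance strictly less than $L$ from $\partial M$, contradicting $\dist(\Sigma_L,\partial M)=L$; orthogonality of $\gamma_p'(L)$ to the resulting hypersurface then comes from Gauss' Lemma, and the $2$-fold cover follows. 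When $k>0$ the paper gives a one-line contradiction that you may find cleaner than your tube-patching sketch: for $t$ near $L$ the smooth hypersurface $\Sigma_t$ is exactly the boundary of the $(L-t)$-tube around the immersed set $\Sigma_L$, but an immersed submanifold with a genuine self-intersection has singular small-radius tube boundary. Conversely, your treatment of the fibers is more explicit than the paper's: the paper's proof stops at embeddedness of $\Sigma_L$ and does not spell out why $\mathcal F_L^{-1}(q)\cong S^k$, nor does it invoke Ehresmann, effectively deferring this to the subsequent Corollary~\ref{thm:fibresballs}; your map $\Lambda(p)=\gamma_p'(L)$ into the unit normal sphere of $\Sigma_L$ at $q$, shown to be an injective local diffeomorphism from a compact $k$-manifold into $S^k$, makes the identification self-contained within the proposition and handles $k=0$ and $k>0$ uniformly.
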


\begin{proof}
	Let us consider the case $k=0$ first. In this case, $\mathcal{F}_L$ is an immersion of $\partial M$ in the interior of $M$. Since $\mathcal{F}_L$ is not injective, the conclusion cannot be drawn from the immersion property. Instead, the conclusion will be obtained from the following claim: if $p_1,p_2\in \partial M$, and $\mathcal{F}_L(p_1)=\mathcal{F}_L(p_2)$, then there exist neighborhoods $\mathcal U_1,\mathcal U_2\subset\partial M$ of $p_1$ and $p_2$, respectively, such that $\mathcal{F}_L(\mathcal U_1)=\mathcal{F}_L(\mathcal U_2)$.
	In order to prove the claim, let us argue as follows. First, by the local form of immersions there exist neighborhoods $\mathcal U_1,\mathcal U_2\subset\partial M$ of $p_1$ and $p_2$, respectively, such that $\mathcal{F}_L(\mathcal U_i)$ is an embedded hypersurface of $M$ containing $q=\mathcal{F}_L(p_1)=\mathcal{F}_L(p_2)$. Locally, $M\setminus\mathcal{F}_L(\mathcal U_1)$ has two connected components, and every point of each of these connected components has distance strictly less than $L$ from $\partial M$. If no neighborhood of $p_1$ had its image contained in $\mathcal{F}_L(\mathcal U_2)$, then one could find points $q'\in\mathcal{F}_L(\mathcal U_1)$ arbitrarily close to $q$ whose distance from $\partial M$ would be less than $L$. This is impossible since all points in $\mathcal{F}_L(\partial M)=\Sigma_L$ have distance exactly equal to $L$ from $\partial M$.
	This proves the claim, and therefore $\Sigma_L$ is a compact connected hypersurface of the interior of $M$. Now, every free boundary geodesic in $M$ arrives orthogonally to $\Sigma_L$, by Gauss' Lemma. In this case, since $\Sigma_L$ has codimension $1$ in $M$, there cannot be more than two points in $\partial M$ with the same $\mathcal{F}_L$-image.  By the local form of immersions, $\mathcal{F}_L$ is a local diffeomorphism around every point of $\partial M$, and by compactness, it follows that $\mathcal{F}_L$ is a two-fold covering. This concludes the case $k=0$.
	
	Assume now that $k>0$. Let us first prove that $\mathcal{F}_L$ is a smooth map of constant rank equal to $n-1-k$. In order to prove this, first recall that every free boundary geodesic has Morse index equal to $k$. Since all focal points must lie in $\Sigma_L$ by Lemma \ref{focalset}, and each free boundary geodesic intersects $\Sigma_L$ exactly once, each free boundary geodesic has exactly one focal point of multiplicity $k$. This number is therefore the dimension of the kernel of $\mathrm d\mathcal{F}_L(p)$ at each point $p\in\partial M$, which implies that $\mathcal{F}_L$ has constant rank equal to $n-1-k$. 
	
	Using the local form of maps with constant rank, it follows immediately that $\Sigma_L$, which is the image of $\mathcal{F}_L$, is a smooth immersed submanifold, possibly with self-intersections, and that for all $p\in\Sigma_L$, $\mathcal{F}_L^{-1}(p)$ is a smooth embedded submanifold of $\partial M$. Now, we observe that for $t\ne L$, $\Sigma_t$ is precisely the set of points whose distance from $\Sigma_L$ is equal to $\vert t-L\vert$. In other words, $\Sigma_t$ is the boundary of the $\vert t-L\vert$-tubular neighborhood of $\Sigma_L$, and we know that for all $t\ne L$ this is a smooth embedded hypersurface. This easily implies that $\Sigma_L$ cannot have singularities due to self-intersections. Namely, the boundary of sufficiently small tubular neighborhoods of immersed submanifolds with singular points at self-intersections is not smooth. 
\end{proof}

\begin{remark}\label{rem:k=n-1}
	Note in particular that, when $k=n-1$, then $M$ is diffeomorphic to an $n$-ball of radius $L$, and the soul $\Sigma$ consists of a single point.
\end{remark}

As a direct consequence, in the case $k=n-2$, we get that $\partial M$ is foliated by spheres of codimension one. This, together with Thurston's theorem in \cite{thurston1976}, shows that the Euler characteristic of $\partial M$ vanishes.

\begin{corollary}\label{topology}
	Let $(M,g)$ be an $n$-dimensional Zoll manifold with connected boundary and index $k$. If $k=n-2$, then $\chi(\partial M) = 0$.
\end{corollary}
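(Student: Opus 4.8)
The plan is to combine Proposition~\ref{SigmaL} with Thurston's existence theorem for codimension-one foliations \cite{thurston1976}. First I would observe that the hypothesis $k = n-2$ forces the soul to be one-dimensional: by Proposition~\ref{SigmaL}, $\dim\Sigma_L = n-1-k = 1$, so $\Sigma_L$ is a closed connected curve, hence diffeomorphic to $\Sbb^1$. If $n=2$, then $k=0$ and $\partial M$ is itself a closed connected $1$-manifold, so $\chi(\partial M)=\chi(\Sbb^1)=0$ and there is nothing more to prove; I may therefore assume $n\ge 3$, in which case $k = n-2 \ge 1$.

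Next, by Proposition~\ref{SigmaL}(ii), the projection onto the soul, $\mathcal{F}_L\colon\partial M\to\Sigma_L\cong\Sbb^1$, is a smooth fiber bundle with fiber $\Sbb^{n-2}$. Its fibers are smooth closed submanifolds of dimension $n-2 = (\dim\partial M)-1$; being the fibers of a submersion, they are the leaves of a smooth codimension-one foliation of the closed manifold $\partial M$. Thurston's theorem then yields $\chi(\partial M)=0$, since a closed manifold carrying a $C^\infty$ codimension-one foliation has vanishing Euler characteristic.

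I do not expect any real obstacle here: all the substantive geometry is already contained in Proposition~\ref{SigmaL}, and the only thing to check is that $\partial M$ meets the hypotheses of Thurston's theorem, namely that it is compact without boundary (it is, being the boundary of the compact manifold $M$) and that no orientability assumption is needed. As an alternative that avoids citing Thurston, one could instead invoke multiplicativity of the Euler characteristic along fiber bundles directly: $\chi(\partial M)=\chi(\Sbb^1)\cdot\chi(\Sbb^{n-2})=0\cdot\chi(\Sbb^{n-2})=0$. I would present the foliation argument as the main route, in keeping with the remark preceding the statement, and mention the fiber-bundle computation as a remark.
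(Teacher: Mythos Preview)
Your proposal is correct and follows essentially the same route as the paper: use Proposition~\ref{SigmaL} to see that the fibers of $\mathcal F_L$ are $(n-2)$-spheres giving a codimension-one foliation of $\partial M$, and then invoke Thurston's theorem to conclude $\chi(\partial M)=0$. Your explicit treatment of the case $n=2$ and the alternative argument via multiplicativity of the Euler characteristic for fiber bundles are welcome additions, but the core argument is the same.
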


Also, from the proof of Proposition~\ref{SigmaL} we easily obtain:

\begin{corollary}\label{thm:fibresballs}
	Let $(M,g)$ be an $n$-dimensional Zoll manifold with connected boundary, soul $\Sigma_L$ and index $k$. The nearest-point map $\pi : M\to\Sigma_L$ is (well defined and) a smooth fiber bundle, whose fibers are $(k+1)$-dimensional closed balls.
\end{corollary}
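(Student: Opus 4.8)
The plan is to realise $\pi$ as the bundle projection of the closed $L$-disk normal bundle $\nu_L(\Sigma)$ of the soul $\Sigma:=\Sigma_L$, where $\exp^{\perp}$ denotes the normal exponential map of $\Sigma$, and then to read off local triviality either directly, if $\exp^{\perp}\colon\nu_L(\Sigma)\to M$ is a diffeomorphism, or, more robustly, from Ehresmann's fibration theorem.

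\emph{Step 1: $\pi$ is well defined and smooth.} Proposition~\ref{SigmaL} and its proof show that for every $r\in\left]0,L\right[$ the $r$-tube around $\Sigma$ is a genuine tubular neighbourhood with boundary $\Sigma_{L-r}$, so $\exp^{\perp}$ restricts to a diffeomorphism of $\{v\in N\Sigma:\vert v\vert<L\}$ onto $\intt M$; hence every $q\in\intt M$ has a unique nearest point in $\Sigma$ (the foot of the unique $v$ with $\exp^{\perp}(v)=q$, at distance $\vert v\vert$) and $\pi$ is smooth on $\intt M$. For $q\in\partial M$, a unit-speed minimizing geodesic from $\Sigma$ to $q$ has length $L=\dist(\Sigma,\partial M)$ (Lemma~\ref{distance}(i)), so it cannot touch $\partial M$ before time $L$, is therefore a genuine geodesic orthogonal to $\Sigma$, and, comparing its distances to $\partial M$ and to $\Sigma$ via Lemma~\ref{distance}(i)--(ii), it must be the reversed free boundary geodesic $\gamma_q$; thus the nearest point is the unique point $\gamma_q(L)=\mathcal F_L(q)$. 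So $\pi$ is well defined, and in the product coordinates $M\setminus\Sigma\cong\left[0,L\right[\times\partial M$ of Theorem~\ref{Zolloneboundary} one simply has $\pi(t,\theta)=\mathcal F_L(\theta)$; together with smoothness on $\intt M$ this gives smoothness of $\pi$ on all of $M$. In particular $\pi(\gamma_p(t))=\mathcal F_L(p)$ for all $p\in\partial M$ and $t\in[0,2L]$, and since $\gamma_p(t)=\exp_{\gamma_p(L)}((t-L)\gamma_p'(L))$ with $\gamma_p'(L)\perp\Sigma$ and $\vert t-L\vert\le L$, Proposition~\ref{surjection} yields $M=\exp^{\perp}(\nu_L(\Sigma))$ and $\pi^{-1}(x)=\exp_x(\{v\in N_x\Sigma:\vert v\vert\le L\})$ for each $x\in\Sigma$.

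\emph{Step 2: the bundle structure.} The cleanest conclusion is that $\exp^{\perp}\colon\nu_L(\Sigma)\to M$ is a diffeomorphism of manifolds with boundary: it is onto by Step 1, a diffeomorphism over the open part, and injective since --- each $\Sigma$-normal geodesic being a reparametrised free boundary geodesic by the symmetry under the involution $\mathcal I$, so that $\exp_x(v)\in\Sigma_{L-\vert v\vert}$ --- $\exp^{\perp}(v_i)=q$ with $\vert v_i\vert=s_i\le L$ forces $s_1=s_2$ (Lemma~\ref{distance}(ii)) and then $v_1=v_2$. If moreover $t=L$ is never a $\Sigma$-focal instant along the normal geodesics, then $\exp^{\perp}$ is a global diffeomorphism, $\pi$ is the disk-bundle projection, and the corollary follows immediately. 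To sidestep the analysis of $\exp^{\perp}$ at $\partial M$, I would instead invoke Ehresmann's theorem for manifolds with boundary: $\pi$ is proper ($M$ compact) and a submersion everywhere --- on $\intt M\setminus\Sigma$ because $\pi$ there factors as $\mathcal F_L$ composed with the projection to $\partial M$, on $\Sigma$ because $\pi|_\Sigma=\mathrm{id}$, and on $\partial M$ because $\pi|_{\partial M}=\mathcal F_L$ is a submersion by Proposition~\ref{SigmaL} (constant rank $\dim\Sigma$ if $k>0$, a covering if $k=0$), which also shows $\pi|_{\partial M}$ is a submersion --- hence $\pi$ is a locally trivial fibration with fibre a compact manifold with boundary.

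\emph{Step 3: the fibre, and the main obstacle.} From Step 1, $\pi^{-1}(x)=\exp_x(\{\vert v\vert\le L\})$; its interior is the image of the open $L$-ball in $N_x\Sigma\cong\Rbb^{k+1}$ under the diffeomorphism $\exp^{\perp}$, hence diffeomorphic to $\Rbb^{k+1}$, while its boundary is $\pi^{-1}(x)\cap\partial M=\mathcal F_L^{-1}(x)$, a $k$-sphere by Proposition~\ref{SigmaL}; so $\pi^{-1}(x)$ is a compact $(k+1)$-manifold with interior $\Rbb^{k+1}$ and boundary $\Sbb^{k}$, i.e. a closed $(k+1)$-ball (for $k=0$, ``$k$-sphere bundle'' is read as ``$2$-fold cover'' and the fibre as a closed interval). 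The step I expect to be the main obstacle is precisely the behaviour across $\partial M$: the boundary is where the free boundary geodesics of $(M,g)$ carry their multiplicity-$k$ focal points of $\partial M$, so a priori $\exp^{\perp}$ could degenerate there and one cannot simply quote the tubular-neighbourhood theorem. The point to establish is that this does \emph{not} obstruct the fibration, because near $\partial M$ the map $\pi$ is controlled through the product coordinates of Theorem~\ref{Zolloneboundary} and the submersion $\mathcal F_L$ of Proposition~\ref{SigmaL} rather than through $\exp^{\perp}$, and that the fibres really close up into balls (and not merely into some other contractible manifolds with boundary) --- here the $\mathcal I$-symmetry of the free boundary geodesics and the maximal-degeneracy statement of Proposition~\ref{thm:maxdeg} do the work.
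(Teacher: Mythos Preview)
The paper gives no explicit proof of this corollary beyond the phrase ``from the proof of Proposition~\ref{SigmaL} we obtain easily,'' so your attempt is already far more detailed than what the authors supply. Your Step~1 and the Ehresmann argument in Step~2 are correct and constitute a sound route to the fibre-bundle structure; the paper's implicit route is the more direct one you also outline, namely that $\exp^{\perp}\colon\nu_L(\Sigma)\to M$ is a global diffeomorphism and $\pi$ is the disk-bundle projection.

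There is, however, a genuine gap in Step~3. The implication ``compact $(k+1)$-manifold with interior $\Rbb^{k+1}$ and boundary $\Sbb^{k}$, \emph{i.e.}\ a closed $(k+1)$-ball'' is not elementary: topologically it is the Brown--Mazur Schoenflies theorem, and in the smooth category the corresponding statement is delicate (and for $k+1=4$ is tied to the smooth $4$-dimensional Poincar\'e conjecture). So the Ehresmann route does not, by itself, identify the fibre smoothly. The way to close the gap is to finish the argument you abandoned in Step~2: show directly that $\exp^{\perp}$ is a diffeomorphism on the \emph{closed} disk bundle. You have already checked bijectivity and that it is a diffeomorphism over the open part; the missing piece is nonsingularity of $d(\exp^{\perp})$ at $\vert v\vert=L$. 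But this is immediate once you notice that the restriction of $\exp^{\perp}$ to the sphere bundle $\{\vert v\vert=L\}$ has an explicit smooth inverse, namely $\partial M\ni p\mapsto\big(\mathcal F_L(p),\,-L\gamma_p'(L)\big)\in S_L(N\Sigma)$, which is visibly smooth because the geodesic flow is. Hence $\exp^{\perp}\vert_{S_L}$ is a diffeomorphism onto $\partial M$; since the radial direction is sent to $-\nu_p$, transverse to $\partial M$, the full differential $d(\exp^{\perp})_v$ is an isomorphism. This gives $M\cong\nu_L(\Sigma)$ as manifolds with boundary, $\pi$ becomes the disk-bundle projection, and the fibres are honest smooth $(k+1)$-balls. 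No appeal to maximal degeneracy or to Schoenflies-type theorems is needed.
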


We conclude this section with the following result on the relation between the index of $M$ and the intersection of its free boundary geodesics.

\begin{corollary} \label{intersect}
	If a Zoll manifold with connected boundary has index zero, then none of its free boundary geodesics intersect.
\end{corollary}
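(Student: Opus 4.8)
The plan is to argue by contradiction, exploiting the two-fold covering structure established in Proposition~\ref{SigmaL}(i). Suppose $(M,g)$ has index zero and that two free boundary geodesics $\gamma_1$ and $\gamma_2$ meet at a point $q\in M$. By Theorem~\ref{Zolloneboundary}, each $\gamma_i$ is embedded, and each meets the soul $\Sigma_L$ exactly once. First I would dispose of the case $q\in\Sigma_L$: here both geodesics arrive orthogonally to $\Sigma_L$ at $q$ (Gauss' Lemma), and since $k=0$ the normal space to $\Sigma_L$ at $q$ is one-dimensional, so $\gamma_1'$ and $\gamma_2'$ are parallel at $q$; the geodesics therefore coincide. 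The substantive case is $q\notin\Sigma_L$, say $q\in\Sigma_{t_0}$ with $t_0\ne L$, equivalently $\dist(q,\partial M)=\min\{t_0,2L-t_0\}<L$.

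For the main case, the key step is to use that for $t\ne L$ the hypersurface $\Sigma_t$ is smooth and embedded (Theorem~\ref{Zolloneboundary}, or the tubular-neighborhood description in the proof of Proposition~\ref{SigmaL}), and that every free boundary geodesic meets $\Sigma_t$ orthogonally. After reparametrising, I may assume $\gamma_1(t_0)=\gamma_2(t_0)=q$ with $t_0\le L$ (if $t_0>L$ replace $\gamma_i$ by $t\mapsto\gamma_i(2L-t)$, which is again a free boundary geodesic through $q$, now at parameter $2L-t_0<L$). Then $\gamma_1'(t_0)$ and $\gamma_2'(t_0)$ are both unit vectors in $T_q\Sigma_{t_0}^{\perp}$. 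Since $\mathcal F_{t_0}\colon\partial M\to\Sigma_{t_0}$ is a diffeomorphism for $t_0<L$ (Lemma~\ref{focalset}, or Theorem~\ref{Zolloneboundary}), $\Sigma_{t_0}$ is a two-sided embedded hypersurface; its normal line at $q$ meets it transversally, so $T_q\Sigma_{t_0}^{\perp}$ is spanned by a single unit normal up to sign. Hence $\gamma_1'(t_0)=\pm\gamma_2'(t_0)$. If the sign is $+$, the two geodesics coincide by uniqueness of geodesics. If the sign is $-$, then $t\mapsto\gamma_1(t_0+t)$ and $t\mapsto\gamma_2(t_0-t)$ are the same geodesic, so the unordered images of $\gamma_1$ and $\gamma_2$ agree; in particular they correspond to the same pair $\{p,\mathcal I(p)\}$ of boundary points, so again they are the same free boundary geodesic (as unparametrised curves).

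The one point requiring care — and the main obstacle — is the case $t_0=L$ handled above only under the orthogonality claim: one must make sure that even at the soul the geodesics arrive orthogonally and that the normal space is one-dimensional. Orthogonality follows from Gauss' Lemma applied to the nearest-point map $\pi\colon M\to\Sigma_L$ of Corollary~\ref{thm:fibresballs}, whose fibres are $1$-dimensional closed balls when $k=0$; the geodesic $\gamma_i$ restricted near $L$ is exactly such a fibre, hence orthogonal to $\Sigma_L$, and $T_q\Sigma_L^{\perp}$ is $1$-dimensional since $\Sigma_L$ is a hypersurface (dimension $n-1-k=n-1$). So the same dichotomy $\gamma_1'(L)=\pm\gamma_2'(L)$ applies and the argument closes. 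I would write this as: assume a common point, reduce to parameter value $\le L$, invoke that $\Sigma_t$ is an embedded hypersurface whose normal line at the common point is one-dimensional, conclude the velocity vectors are $\pm$ one another, and finish by uniqueness of geodesics together with the $\mathcal I$-symmetry of free boundary geodesics. Therefore distinct free boundary geodesics are disjoint.
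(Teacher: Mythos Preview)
Your proof is correct and rests on the same idea as the paper's: free boundary geodesics meet each $\Sigma_t$ orthogonally, and since every $\Sigma_t$ (including $\Sigma_L$, which is a hypersurface when $k=0$) has one-dimensional normal space, two such geodesics through a common point must have parallel velocities there and hence coincide as unparametrised curves. The paper's version is a bit more economical: it first uses that $\mathcal F_t$ is a diffeomorphism for $t<L$ to force any intersection onto $\Sigma_L$, and then applies the normal-space argument only at the soul.
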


\begin{proof}
	Since $\mathcal{F}_t$ is a diffeomorphism for all $t < L$, an intersection point between two free boundary geodesics can only take place in the soul $\Sigma_L$. Since $\Sigma_L$ is a smooth hypersurface by Proposition \ref{SigmaL}, if two free boundary geodesics intersect at a point $p \in \Sigma_L$, then $\Sigma_L$ has two normal vectors at $p$, which is a contradiction.
\end{proof}

\section{Topology of Zoll manifolds with connected boundary}
\label{sec:topZollconnbound}
We begin by collecting here some results about Zoll manifolds with connected boundary. The first one deals with the homology of such manifolds.

\begin{theorem}\label{homology2}
	Let $(M,g)$ be a Zoll manifold with connected boundary and index $k \geq 0$. Then
	\begin{align*}
		H_1(M,\partial M;\Zbb) \cong 
		\begin{cases}
			0, & \text{if } k > 0 \\
			\Zbb_2, & \text{if } k = 0.
		\end{cases}
	\end{align*}
\end{theorem}

\begin{proof}
	Let $\Sigma_L$ denote the soul of $M$. By Corollary \ref{thm:fibresballs}, the nearest-point projection $\pi : M \to \Sigma_L$	is a disk bundle. In particular, since the fibers are contractible, $\pi$ is a homotopy	equivalence. Moreover, by Proposition \ref{SigmaL}, the restriction of $\pi$ to the	boundary coincides with the map	$\mathcal{F}_L: \partial M \to \Sigma_L$. Also let $i : \partial M \to M$ denote the inclusion map, so that $\mathcal{F}_L = \pi \circ i$. Consider the following part of the long exact sequence of the pair $(M,\partial M)$:
	
	\begin{equation} \label{longexact}
		\begin{tikzcd}
			\cdots \arrow[r] &
			H_1(\partial M;\Zbb) \arrow[r, "i_\ast"] &
			H_1(M;\Zbb) \arrow[r] &
			H_1(M, \partial M; \Zbb) \arrow[r] &
			\cdots
		\end{tikzcd}
	\end{equation}
	
	Assume first that $k > 0$. Proposition \ref{SigmaL} (ii) implies that $\mathcal{F}_L:\partial M\to\Sigma_L$ is a $k$-sphere fiber bundle. Since $S^k$ is connected, the following part of the homotopy exact sequence of the fibration 
	\[
	\begin{tikzcd}
		\cdots \arrow[r] &
		\pi_1(S^k) \arrow[r] &
		\pi_1(\partial M) \arrow[r, "(\mathcal{F}_L)_*"] &
		\pi_1(\Sigma_L) \arrow[r] &
		\pi_0(S^k) \arrow[r] &
		\cdots
	\end{tikzcd}
	\]
	shows that $(\mathcal{F}_L)_\ast : \pi_1(\partial M) \to \pi_1(\Sigma_L)$ is surjective. Hence, $(\mathcal{F}_L)_\ast \allowbreak: H_1(\partial M;\Zbb) \allowbreak \to H_1(\Sigma_L; \Zbb)$ is also surjective. Since $\pi$ is a homotopy equivalence, it follows that the map 
	\begin{align*}
		i_\ast = \pi_\ast^{-1} \circ (\mathcal{F}_L)_\ast : H_1(\partial M;\Zbb) \to H_1(M;\Zbb)
	\end{align*}
	is surjective. Therefore, the exactness of \eqref{longexact} yields $H_1(M,\partial M;\Zbb) = 0$.
	
	Assume now that $k = 0$. By Proposition \ref{SigmaL} (i), $\mathcal{F}_L : \partial M \to \Sigma_L$ is a nontrivial $2$-fold covering map. Since $\pi$ is a homotopy equivalence, exactness of \eqref{longexact} implies that 
	\begin{align*}
		H_1(M, \partial M;\Zbb) \cong \frac{H_1(M;\Zbb)}{\im i_\ast} \cong \frac{H_1(\Sigma_L; \Zbb)}{\im \left( (\mathcal{F}_L)_\ast : H_1(\partial M;\Zbb) \to H_1(\Sigma_L;\Zbb) \right)}.
	\end{align*}
	Set $G = \pi_1(\Sigma_L)$ and $H = (\mathcal F_L)_*\pi_1(\partial M)$. Since $H$ has index two in $G$, $H$ is normal and $G/H \cong \Zbb_2$. In particular, $G/H$ is abelian, so $[G,G] \subset H$. Therefore, the image of $(\mathcal{F}_L)_\ast : H_1(\partial M;\Zbb) \to H_1(\Sigma_L;\Zbb)$ 
	is
	\begin{align*}
		\frac{H}{[G,G]} \subset \frac{G}{[G,G]} = H_1(\Sigma_L;\Zbb).
	\end{align*}
	Consequently,
	\begin{align*}
		\frac{H_1(\Sigma_L;\Zbb)}
		{\im \left((\mathcal F_L)_*:H_1(\partial M;\Zbb)\to H_1(\Sigma_L;\Zbb)\right)} \cong \frac{G/[G,G]}{H/[G,G]} \cong \frac{G}{H} \cong \Zbb_2.
	\end{align*}
	This concludes the proof.
\end{proof}

As an application, one can topologically classify all Zoll surfaces with connected boundary.

\begin{theorem}\label{twodimension}
	Let $(M,g)$ be a Zoll surface with connected boundary whose free boundary geodesics all have length $2L$. Then, one of the following holds:
	\begin{itemize}
		\item[$\mathrm{(i)}$] either $M$ is a disk and all of its free boundary geodesics intersect at a common point at distance $L$ from $\partial M$;
		\item[$\mathrm{(ii)}$] or $M$ is a M\"obius band and none of its free boundary geodesics intersect.
	\end{itemize}
\end{theorem}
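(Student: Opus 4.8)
The plan is to split the argument according to the index $k$ of $M$, which by Proposition~\ref{thm:sameMorseindex} is a well-defined number and, since $\dim M=2$, must lie in $\{0,1\}$.

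First I would dispose of the case $k=1$. Here $k=n-1$, so Remark~\ref{rem:k=n-1} applies verbatim: $M$ is diffeomorphic to a $2$-ball of radius $L$ and the soul $\Sigma_L$ is a single point. Since $\Sigma_L$ is by definition the set of midpoints of the free boundary geodesics, every free boundary geodesic passes through this point, which lies at distance exactly $L$ from $\partial M$ by Lemma~\ref{distance}(i) (applied with $t=L$). This is precisely alternative~(i).

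Next, the case $k=0$. By Corollary~\ref{intersect}, no two free boundary geodesics of $M$ intersect, so the ``none of its free boundary geodesics intersect'' part of alternative~(ii) is immediate, and it only remains to pin down the diffeomorphism type of $M$. By Proposition~\ref{SigmaL} the soul $\Sigma_L$ is a closed connected embedded submanifold of dimension $n-1-k=1$; it is nonempty because it contains the midpoint of any free boundary geodesic (which exists by Proposition~\ref{surjection}), hence $\Sigma_L$ is diffeomorphic to $\Sbb^1$. By Corollary~\ref{thm:fibresballs} the nearest-point projection $\pi\colon M\to\Sigma_L\cong\Sbb^1$ is a smooth fiber bundle whose fibers are $(k+1)$-dimensional closed balls, i.e.\ closed intervals. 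Up to isomorphism there are exactly two interval bundles over the circle: the trivial one $\Sbb^1\times[0,1]$, an annulus whose boundary has two components, and the nontrivial one, the Möbius band, whose boundary is a single circle. Since $\partial M$ is assumed connected, $M$ must be the Möbius band, which gives alternative~(ii). (As a cross-check, Corollary~\ref{thm:H1ne0k=0} gives $H_1(M,\partial M;\Zbb)\cong\Zbb_2$ in this case, which already excludes the disk and the annulus.)

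I do not anticipate a genuine obstacle here: all the substantive work has been carried out in the preceding sections, and the only mildly nontrivial input is the classification of $[0,1]$-bundles over $\Sbb^1$, which is standard (equivalently, $\pi_0\mathrm{Diff}([0,1])\cong\Zbb_2$, or the classification of $I$-bundles over surfaces restricted to the one-dimensional base). The main point requiring care is bookkeeping — verifying that in the $k=0$ case $\Sigma_L$ is genuinely a circle and that $\pi$ is the bundle with interval fibers — but both facts are immediate consequences of Proposition~\ref{SigmaL} and Corollary~\ref{thm:fibresballs}.
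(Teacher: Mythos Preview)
Your proof is correct and takes a genuinely different route from the paper's.

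The paper first determines the topological type of $M$ via homology: it invokes Theorem~\ref{homology} to get $H_1(M,\partial M;\Zbb)\in\{0,\Zbb_2\}$, identifies $H_1(M,\partial M;\Zbb)\cong H_1(M/\partial M;\Zbb)$, and uses the classification of closed surfaces to conclude that $M$ is a disk or a M\"obius band. Only afterwards does it analyze the geodesic intersection behavior in each case; in particular, for the disk it gives a direct ad hoc argument (a Jordan-curve style argument forcing two free boundary geodesics to meet, then showing the meeting point must lie in $\Sigma_L$ and that $\Sigma_L$ collapses to a point) rather than invoking Remark~\ref{rem:k=n-1}.

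You instead split on the index $k\in\{0,1\}$ from the outset and let the structure theorems do all the work: Remark~\ref{rem:k=n-1} for $k=1$, and Proposition~\ref{SigmaL} plus Corollary~\ref{thm:fibresballs} for $k=0$, reducing the latter to the classification of $[0,1]$-bundles over $\Sbb^1$. Your approach is shorter and more in the spirit of the general machinery the paper has built; the paper's approach, by contrast, keeps the two-dimensional argument closer to first principles and showcases Theorem~\ref{homology}. Both are valid; yours makes clearer that Theorem~\ref{twodimension} is really a corollary of the structural results of Section~\ref{projection}.
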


\begin{proof}
	If $M$ is a Zoll surface with connected boundary, Theorem \ref{homology2} implies $H_1(M, \partial M;\Zbb)$ is either trivial or $\Zbb_2$. But $H_1(M, \partial M;\Zbb)$ is isomorphic to $H_1(M/\partial M;\Zbb)$, where $M/\partial M$ denotes the space obtained from $M$ by collapsing $\partial M$ to a point. Since the boundary of $M$ is a circle, $M/\partial M$ is homeomorphic to a closed surface obtained by capping the circle with a disk. If the first homology group of a closed surface is trivial, then the surface is topologically a sphere; if it is $\Zbb_2$, the surface is topologically a projective plane. So, $M$ is homeomorphic to either a sphere or a projective plane with a disk removed.
	
	Suppose $M$ is a disk, and let $p,p' \in \partial M$ with $p' = \gamma_p(2L)$. If no other free boundary geodesic of $M$ intersects the image of $\gamma_p$, then all free boundary geodesics starting at a point of one of the intervals determined by $p$ and $p'$ must finish at the same interval. In particular, by continuity of $\mathcal{F}$, at least one free boundary geodesic should start and finish at the same point, which is a contradiction. Therefore, one can find $q \in \partial M$ different from $p$ and $p'$ determining another free boundary geodesic of $M$ whose intersection with the image of $\gamma_p$ is nonempty. By Lemma \ref{distance}, we may assume, without loss of generality, that $\gamma_p(t_0) = \gamma_q(t_0) = x_0$, where $t_0 \leq L$ denotes the first time at which these geodesics intersect. If $\Sigma_{t_0}$ were a curve, it would have two different normal vectors at $x_0$, which cannot happen. Thus, $\Sigma_{t_0}$ is a single point, which implies $t_0 = 2L-t_0$, and then $t_0=L$. Since all free boundary geodesics of $M$ meet $\Sigma_L$ by continuity of $\mathcal{F}$, then all of them intersect each other exactly at $\Sigma_L$.
	
	Now, if $M$ is a M\"obius band then the index of $M$ must be zero by Theorem \ref{homology2} and thus no pair of free boundary geodesics intersect by Corollary \ref{intersect}.
\end{proof}

For the 3-dimensional case, we have the following classification result.

\begin{proposition}[Topological classification of Zoll manifolds with boundary in dimension $3$]\label{thm:class3dim}
	Let $M$ be a $3$-dimensional compact manifold with boundary that admits a Zoll metric of index $k\in\{0,1,2\}$. Then:
	\begin{itemize}
		\item[$\mathrm{(i)}$] If $\partial M$ has 2 connected components (and therefore $k=0$), then $M$ is the product of a closed interval with any closed surface.
		
		\item[$\mathrm{(ii)}$] if $\partial M$ is connected and $k=0$, then the soul can be any non-simply connected closed surface $S$, and $M$ is diffeomorphic to the total space of the twisted interval bundle over $S$. When $S$  is a nonorientable (resp., orientable) closed surface of genus $g$ ($\ge 1$), then the boundary of $M$ is diffeomorphic to the oriented closed surface of genus $g-1$
		(resp., of genus $2g-1$).
		
		\item[$\mathrm{(iii)}$] if $k=1$, then $M$ is diffeomorphic to a solid $3$-torus or to a solid Klein bottle, and the soul is a circle.
		
		\item[$\mathrm{(iv)}$] if $k=2$, then $M$ is a $3$-ball, and the soul consists of a single point.
	\end{itemize}
	All cases above occur.
\end{proposition}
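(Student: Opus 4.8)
The plan is to combine the structural description obtained above — which presents a Zoll $M$ as a ball bundle over its soul — with the standard classifications of disk bundles over circles and of interval bundles over surfaces, treating the three values $k\in\{0,1,2\}$ separately, and then to write down explicit Zoll metrics realizing each topological type. The case $k=2$ is immediate: since $k=n-1$ here, Remark~\ref{rem:k=n-1} already gives that $M$ is a $3$-ball with soul a point, and this occurs for the flat $3$-ball $B^3_L\subset\Rbb^3$ (the radial segments issuing orthogonally from the sphere of radius $L$ arrive orthogonally at the opposite point after length $2L$).

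For the remaining connected-boundary cases the mechanism is uniform. Let $\Sigma=\Sigma_L$ be the soul; by Proposition~\ref{SigmaL} it is a closed connected embedded submanifold of dimension $3-1-k$, and by Corollary~\ref{thm:fibresballs} the nearest-point map $\pi\colon M\to\Sigma$ is a smooth fiber bundle with fiber the closed $(k+1)$-ball, so $M$ is the disk bundle of a rank-$(k+1)$ vector bundle over $\Sigma$ whose sphere bundle is $\mathcal{F}_L\colon\partial M\to\Sigma$. When $k=1$ we have $\Sigma\cong\Sbb^1$ and $M$ is a $D^2$-bundle over the circle; there are exactly two of these — the trivial one (solid torus) and the nontrivial one (solid Klein bottle), distinguished by orientability since $\pi_0(\mathrm{Diff}(D^2))\cong\Zbb_2$ — so $M$ is a solid torus or a solid Klein bottle with soul the core circle. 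Both occur: the flat solid torus $\Sbb^1\times D^2_L$ and the flat $D^2_L$-bundle over $\Sbb^1$ with reflection monodromy (a flat solid Klein bottle) are Zoll, the radial segments normal to the boundary crossing the core at their midpoints and returning orthogonally; the soul being $1$-dimensional then forces $k=1$ via $\dim\Sigma=n-1-k$.

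When $k=0$ (still connected boundary), $\Sigma=S$ is a closed connected surface and $M$ is an $I$-bundle over $S$; by Theorem~\ref{thm:fiberbundle}(i) the associated $\Sbb^0$-bundle $\mathcal{F}_L\colon\partial M\to S$ is a \emph{nontrivial} double cover — this is exactly where connectedness of $\partial M$ enters — so $M$ is a twisted $I$-bundle. A closed surface admits a nontrivial double cover precisely when $H^1(S;\Zbb_2)\ne0$, i.e.\ when $S\ne\Sbb^2$, so every soul here is non-simply connected. Since $\partial M$ is a connected double cover of $S$ we have $\chi(\partial M)=2\chi(S)$: if $S$ is orientable of genus $g\ge1$ the cover is orientable, of genus $2g-1$; if $S$ is non-orientable of genus $g\ge1$ and $\partial M$ is its orientation double cover, then $\partial M$ is orientable with $\chi=2(2-g)$, hence of genus $g-1$. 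Conversely every non-simply connected closed surface $S$ is a soul: choose a connected double cover $\widehat S\to S$ with deck involution $\tau$ (the orientation cover when $S$ is non-orientable), a $\tau$-invariant metric $\widehat g$ on $\widehat S$, and put on $M=(\widehat S\times[-1,1])/{\sim}$, $(x,t)\sim(\tau x,-t)$, the descended product metric $\widehat g\oplus\mathrm dt^2$; one checks this is Zoll with $L=1$, the free boundary geodesics being $s\mapsto[(x_0,1-s)]$, which meet $\partial M\cong\widehat S$ orthogonally at both ends and have midpoints filling $\widehat S/\tau=S$, so again $k=0$. This is the $n=3$, $k=0$ instance of the mapping-torus construction of Section~\ref{sec:construction}; concretely, $S=\RPbb^2$ is realized by $\RPbb^3$ with a small open geodesic ball removed, whose boundary is $\Sbb^2$.

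Finally, if $\partial M$ is disconnected then $k=0$ by Proposition~\ref{thm:sameMorseindex}, and Theorem~\ref{Zolltwoboundaries} gives $M\cong[0,2L]\times\Sigma_0$ with $\Sigma_0$ a closed surface; conversely, for any closed surface $S$ the product metric $\mathrm dt^2\oplus g_S$ on $[0,2L]\times S$ is Zoll, the free boundary geodesics being $t\mapsto(t,x_0)$. The structural half of every case is then a routine consequence of the earlier results together with the cited bundle classifications; the substantive point is \emph{realizability}, i.e.\ producing, for each prescribed index and soul, an actual Zoll metric. The main obstacle is thus the $k=0$ connected-boundary case: one must show that every prescribed non-simply connected soul is attained, which is what the reflection/mapping-torus construction above — the three-dimensional shadow of Section~\ref{sec:construction} — delivers, and one must keep track of which double cover of the soul arises as $\partial M$, since that is what pins down the diffeomorphism type of the boundary.
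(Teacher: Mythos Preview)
Your approach is essentially the paper's: both exploit the ball-bundle structure over the soul (Corollary~\ref{thm:fibresballs}) together with the standard classifications of $D^2$- and $I$-bundles, and both read off the boundary genus from $\chi(\partial M)=2\chi(S)$. Your $k=1$ argument is in fact slightly cleaner than the paper's---you classify $D^2$-bundles over $\Sbb^1$ directly via $\pi_0\big(\mathrm{Diff}(D^2)\big)\cong\Zbb_2$, whereas the paper first records $\chi(\partial M)=0$ (Corollary~\ref{topology}), lists the compact $3$-manifolds with torus or Klein-bottle boundary, and only then uses the disk-bundle structure to single out the solid torus and solid Klein bottle.

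There is one genuine gap in your case~(ii) when the soul $S$ is non-orientable. You compute the genus of $\partial M$ only \emph{conditionally}, under the hypothesis ``$\partial M$ is its orientation double cover'', and you never justify that hypothesis; a non-orientable closed surface has several non-isomorphic connected double covers, hence several twisted $I$-bundles, and you give no reason why the Zoll structure forces the orientation one. The paper closes this by asserting, with a reference to Hempel, that the twisted $I$-bundle over a closed surface is unique (up to diffeomorphism of the total space), which then pins down $\partial M$ as orientable of the stated genus. You are visibly aware of the issue in your last sentence, but you do not resolve it. A minor aside: your quotient $(\widehat S\times[-1,1])/{\sim}$ is a correct construction of Zoll examples with prescribed soul, but it is \emph{not} the mapping-torus construction of Section~\ref{sec:construction}, which produces bundles over $\Sbb^1$ from an isometry of a lower-dimensional Zoll manifold; the cross-reference is misleading.
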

\begin{proof}
	The case when $\partial M$ has two connected components follows immediately from Theorem~\ref{Zolltwoboundaries}.
	
	For the case $k=0$ and $\partial M$ connected, the soul must be a connected and non-simply connected closed surface which admits a one-sided embedding into a $3$-manifold, for otherwise a tubular neighborhood of such embedding would have disconnected boundary. Every closed surface $S$ other than the $2$-sphere admits such embeddings, and a small tubular neighborhood of one of them is the total space of a nontrivial interval bundle over $S$. There exists a unique nontrivial interval bundle over any connected closed surface of genus $g$, see for instance \cite{Hempel1976}. Its boundary is a connected closed orientable surface, and its genus is easily computed as follows. Assume that $S$ is nonorientable of genus $g$, so that its Euler characteristic is $\chi(S)=2-g$. Then, $\chi(\partial M)=4-2g$. Since $\partial M$ is orientable, then its genus is $1-\frac12\chi(\partial M)=g-1$. If $S$ is orientable, then $\chi(S)=2-2g$,
	$\chi(\partial M)=4-4g$, and the genus of $\partial M$ is $1-\frac12\chi(\partial M)=2g-1$.
	
	For the case $k=1$, clearly the soul has to be a circle. It follows from Corollary~\ref{topology} that $\chi(\partial M)=0$, i.e., $\partial M$ is either a torus or a Klein bottle. The only compact $3$-manifolds with this type of boundary are: solid tori, solid Klein bottles, twisted interval bundles over the Klein bottle, or more general nonorientable Seifert fiber spaces with torus boundary. In addition, by Corollary~\ref{thm:fibresballs}, such a manifold must admit a fiber bundle structure over the circle, whose fibers are diffeomorphic to $2$-disks. This is the case only for solid tori and solid Klein bottles, see for instance \cite{Hatcher3Mfd} for a classification of disk bundles over the circle, or \cite{Hempel1976}. Examples of Zoll manifolds diffeomorphic to solid $3$-tori or solid Klein bottles having souls diffeomorphic to the circle are easily constructed.
	
	For the case $k=2$, see Remark~\ref{rem:k=n-1}.
\end{proof}

Along the same lines as Proposition~\ref{thm:class3dim}, item (ii), we conclude with an easy criterion for a closed manifold to be the soul of a Zoll manifold with connected boundary in codimension $1$:

\begin{proposition} \label{thm:soulcodim1connboundary}
	A closed $(n - 1)$-manifold $\Sigma$ is the soul of an $n$-dimensional Zoll manifold with connected boundary and index $k = 0$ if and only if $H^1(\Sigma; \Zbb_2) \neq 0$.
\end{proposition}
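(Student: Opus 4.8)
The plan is to recast the statement in bundle‑theoretic terms using the structure results of Section~\ref{projection}, and then to establish the two implications separately: the forward one by covering‑space theory, the converse by an explicit twisted‑product construction carrying a suitable metric. Recall that when $k=0$ the soul $\Sigma$ has codimension one in $M$, and by Corollary~\ref{thm:fibresballs} the nearest‑point map $\pi\colon M\to\Sigma$ is a smooth fiber bundle with fiber $[0,1]$; since $\mathrm{Diff}([0,1])$ deformation retracts onto $O(1)$, this bundle reduces to the unit‑interval bundle of a real line bundle over $\Sigma$, necessarily the normal bundle $\nu$ of the embedded soul $\Sigma\subset M$, with zero section $\Sigma$ and $\partial M=S(\nu)$ the unit sphere bundle (a $2$‑sheeted cover of $\Sigma$). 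The Stiefel--Whitney class in question is then $w_1(\nu)\in H^1(M;\Zbb_2)\cong H^1(\Sigma;\Zbb_2)$, the obstruction to the soul being two‑sided, and the whole proof comes down to showing that connectedness of $\partial M$ is equivalent to $w_1(\nu)\ne0$, together with a realization argument.

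For the forward implication, suppose $\Sigma$ is the soul of such an $(M,g)$. By Theorem~\ref{thm:fiberbundle}(i) (equivalently Proposition~\ref{SigmaL}(i)), since $k=0$ and $\partial M$ is connected, $\mathcal{F}_L\colon\partial M\to\Sigma$ is a \emph{nontrivial} $2$‑fold covering map. Connected double covers of $\Sigma$ are classified up to isomorphism by the nonzero elements of $H^1(\Sigma;\Zbb_2)\cong\mathrm{Hom}(\pi_1(\Sigma),\Zbb_2)$ — equivalently, by the index‑two subgroups of $\pi_1(\Sigma)$ — and under this bijection the class of $\mathcal{F}_L$ is exactly $w_1(\nu)$. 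Hence $w_1(\nu)\ne0$, so $H^1(\Sigma;\Zbb_2)\ne0$ and $\Sigma$ admits a nontrivial real line bundle.

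For the converse, given a nonzero class $w_1\in H^1(\Sigma;\Zbb_2)$, the corresponding homomorphism $\pi_1(\Sigma)\to\Zbb_2$ is onto and determines a connected double cover $q\colon P\to\Sigma$ with deck transformation $\sigma$. I would fix any Riemannian metric $h$ on $\Sigma$, lift it to a $\sigma$‑invariant metric on $P$ (still denoted $h$), and let $\Zbb_2$ act on $P\times[-1,1]$ by $\sigma\cdot(x,t)=(\sigma x,-t)$; this action is free and isometric for $h\oplus\mathrm dt^2$, so $M:=(P\times[-1,1])/\Zbb_2$ carries a Riemannian metric $g$. In the spirit of the twisted interval bundles of Proposition~\ref{thm:class3dim}(ii), I would then check that $(M,g)$ is Zoll with connected boundary: $M$ is connected since $P$ is; $\partial M$ is the image of $P\times\{\pm1\}$, which maps bijectively onto $P$ and is therefore connected; the vertical curves $t\mapsto[(x,t)]$ are unit‑speed geodesics of length $2$ meeting $\partial M$ orthogonally at $t=\pm1$ and nowhere in between; and every geodesic issuing orthogonally from $\partial M$ is vertical. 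The soul, being the midpoint locus $\{t=0\}$, equals $P\times_{\Zbb_2}\{0\}\cong\Sigma$, and its normal bundle is the line bundle classified by $w_1$. Finally $k=0$: the hypersurfaces $\{t=\pm1\}$ and $\{t=0\}$ are totally geodesic, and $\mathcal R_\gamma\equiv0$ along any vertical geodesic $\gamma$ (its velocity lies in the flat, orthogonally split $[-1,1]$‑direction), so a short ODE computation shows that the space $\mathbb J_\gamma$ of Jacobi fields satisfying the initial free‑boundary condition contains no nonzero element vanishing at a positive instant; thus $\gamma$ has no $\partial M$‑focal points, and the index is $0$ by Proposition~\ref{thm:maxdeg}. (For $\Sigma=S^1$ this recovers the flat M\"obius band of Theorem~\ref{twodimension}.)

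The only step that is not routine bookkeeping is this last verification — that the descended product metric is genuinely Zoll and of index exactly $0$. The key inputs are that $\{t=\pm1\}$ is totally geodesic in $M$ and that the $[-1,1]$‑factor is flat and orthogonally split off, which together force every inward orthogonal geodesic to be a vertical segment of length $2$ and kill all $\partial M$‑focal points; the remaining ingredients — the classification of connected double covers, the identification of the class of $\mathcal{F}_L$ with $w_1(\nu)$, and the reduction of the interval bundle to a line bundle — are standard. I expect this to be the main obstacle only in the sense of requiring care, not depth.
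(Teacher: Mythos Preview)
Your proof is correct and follows the same approach as the paper: both identify the condition with the existence of a nontrivial real line bundle (equivalently, a twisted interval bundle) over $\Sigma$, use Proposition~\ref{SigmaL}(i)/Theorem~\ref{thm:fiberbundle}(i) for the forward direction, and realize the converse by endowing that twisted bundle with a product-type metric. Your treatment is considerably more explicit---the paper's proof is two sentences and does not spell out the metric construction or the $k=0$ verification---but the underlying ideas are identical.
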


\begin{proof}
	Suppose first that $\Sigma$ is the soul of an $n$-dimensional Zoll manifold $M$ with connected boundary and index $k = 0$. By Proposition \ref{SigmaL} (i), the map $\mathcal{F}_L : \partial M \to \Sigma$ is a nontrivial $2$-fold covering map and $(\mathcal{F}_L)_\ast \pi_1(\partial M)$ is a subgroup of index two in $\pi_1(\Sigma)$. So, there exists a nontrivial homomorphism $\chi : \pi_1(\Sigma) \to \Zbb_2$. Since
	\begin{align*}
		H^1(\Sigma; \Zbb_2) \cong \Hom(H_1(\Sigma; \Zbb), \Zbb_2) \cong \Hom(\pi_1(\Sigma),\Zbb_2),
	\end{align*}
	it follows that $H^1(\Sigma; \Zbb_2)\neq 0$.
	
	Conversely, assume that $H^1(\Sigma; \Zbb_2)\neq 0$. Since isomorphism classes of real line bundles over $\Sigma$ are classified by their first Stiefel--Whitney class, there exists a nontrivial real line bundle $\pi : E \to \Sigma$. Choose a bundle metric on $E$ and let $\eta : S(E) \to \Sigma$ be its unit sphere bundle. Since
	$E$ has rank one, $\eta$ is a $2$-fold covering map. Moreover, since $E$ is nontrivial, this covering is nontrivial and therefore connected. Let $ \tau: S(E) \to S(E)$ denote its nontrivial deck transformation. Fix $L > 0$ and define 
	\begin{align*}
		M = \left(S(E) \times [-L, L] \right)/\sim
	\end{align*}
	where
	\begin{align*}
		(x,t)\sim(\tau(x),-t).
	\end{align*}
	This quotient is naturally diffeomorphic to the closed radius $L$ disk bundle of $E$ via the map $[x, t] \mapsto t x$. In particular, $M$ is a smooth compact $n$-manifold with boundary, and $\partial M$ is
	naturally identified with $S(E)$, through the map $S(E) \ni x \mapsto [x,L] \in \partial M$. Since $S(E)$ is connected, so is $\partial M$. Moreover, the image of $(S(E)\times\{0\})$ in $M$ is naturally identified with $S(E)/\langle \tau \rangle = \Sigma$.
	
	Now let $g_\Sigma$ be a Riemannian metric on $\Sigma$. On $S(E)\times[-L,L]$, consider the product metric
	$\widetilde g=\eta^*g_\Sigma \oplus \mathrm dt^2$. Since $\eta \circ \tau = \eta$, the involution
	\begin{align*}
		(x,t)\longmapsto(\tau(x),-t)
	\end{align*}
	is an isometry of $ \left( S(E) \times[-L, L], \widetilde g \right)$. Hence $\widetilde g$ descends to a smooth Riemannian metric $g$ on $M$. For each $x\in S(E)$, the curve $[0, 2L] \ni t \mapsto [x,t - L] \in M$ is the unit-speed free boundary geodesic on $(M,g)$ that issues from $[x, -L] \in \partial M$. It follows that $(M,g)$ is a Zoll manifold with connected boundary and its soul is naturally identified with $\Sigma$. Since this soul has codimension one, the index of $(M,g)$ is $k = 0$.
\end{proof}

\section{Constructions of Zoll manifolds with connected boundary}\label{sec:construction}
Let us now describe a way to obtain Zoll manifolds with boundary from a given one, using a construction known in the literature as the \textit{mapping torus}, which we will now recall.

Let $\varphi : M \to M$ be an isometry of a Riemannian manifold $(M,g)$ (not necessarily Zoll). The group of integers $\Zbb$ acts properly and discontinuously by isometries on the product manifold $(M \times \Rbb, g \oplus \mathrm dt^2)$ via 
\[ n \cdot (p, t) = (\varphi^n(p), t + n).\]
The quotient manifold with the quotient metric is called the {\it mapping torus} of $\varphi$, and is denoted by $(M_\varphi, g_\varphi)$. The following properties are easily verified:
\begin{itemize}
	\item[(i)] $M_\varphi$ is compact if and only if $M$ is compact;
	\item[(ii)] $\partial M_\varphi$ is connected if and only if $\partial M$ is connected;
	\item[(iii)] the natural projection $\pi : M \times \Rbb \to M_\varphi$ is a Riemannian covering map;
	\item[(iv)] the projection $\sigma : M_\varphi \to \Sbb^1$ given by $\sigma([p,t]) \equiv t \bmod 1$ is a fiber bundle over $\Sbb^1$ with fiber $M$.
\end{itemize}

We can now state and prove the following result.

\begin{theorem}\label{mappingtorus}
	Let $(M,g)$ be a Zoll manifold with connected boundary. Given an isometry $\varphi : M \to M$, the mapping torus $(M_\varphi, g_\varphi)$ is a Zoll manifold with connected boundary whose free boundary geodesics have the same length as those of $M$. Moreover, the indices of $M$ and $M_\varphi$ coincide.
\end{theorem}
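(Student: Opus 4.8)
The plan is to lift the whole situation through the Riemannian covering $\pi\colon(M\times\Rbb,\,g+\mathrm dt^2)\to(M_\varphi,g_\varphi)$ of property~(iii), using that in a Riemannian product geodesics, Jacobi fields, and index forms split along the two factors. First I would note that, since $\varphi$ is an isometry of the manifold with boundary $M$, it preserves $\partial M$, so the $\Zbb$-action preserves $\partial M\times\Rbb$; hence $M_\varphi$ is compact and connected (property~(i) and connectedness of $M$), and $\partial M_\varphi=\pi(\partial M\times\Rbb)$ is connected by property~(ii) because $\partial M$ is. Moreover $\mathrm d\pi$ identifies isometrically the inward unit normal of $\partial M\times\Rbb$ — which in the product is just the inward unit normal $\nu$ of $\partial M$, viewed as a vector tangent to the $M$-factor — with the inward unit normal of $\partial M_\varphi$.

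Next I would describe the orthogonal geodesics of $M_\varphi$. Fix $[p,t_0]\in\partial M_\varphi$; the geodesic $s\mapsto\exp_{[p,t_0]}(s\nu)$ lifts to $s\mapsto\exp^{M\times\Rbb}_{(p,t_0)}(s\,\nu_p)=\big(\gamma_p(s),t_0\big)$, where $\gamma_p$ is the orthogonal geodesic of $M$ starting at $p$ (geodesics in a product are products of geodesics, and here the $\Rbb$-component stays constant). Since $\pi$ is a local isometry mapping $\partial M\times\Rbb$ onto $\partial M_\varphi$, the set of $s$ at which $\big(\gamma_p(s),t_0\big)$ meets $\partial M\times\Rbb$ — namely $\{s:\gamma_p(s)\in\partial M\}$ — equals the set of $s$ at which the projected geodesic meets $\partial M_\varphi$, and both tangency and orthogonality to the boundary are preserved. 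As $M$ is Zoll with boundary and free boundary geodesics of length $2L$, the geodesic $\gamma_p$ first returns to $\partial M$ at $s=2L$, does so orthogonally, and is nowhere tangent to $\partial M$ in between; the same therefore holds downstairs. This shows that $(M_\varphi,g_\varphi)$ is a Zoll manifold with connected boundary whose free boundary geodesics have the same length $2L$ as those of $M$.

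For the equality of indices, by Proposition~\ref{thm:sameMorseindex} applied to $M_\varphi$ it suffices to compute the Morse index of one free boundary geodesic $\widetilde\gamma(s)=\big[\gamma_p(s),t_0\big]$, and by Proposition~\ref{thm:maxdeg} this index equals the number of $\partial M_\varphi$-focal instants along $\widetilde\gamma$, counted with multiplicity. A free boundary Jacobi field $J$ along $\widetilde\gamma$ lifts to a Jacobi field along $\big(\gamma_p,t_0\big)$ in the product, which splits as $J(s)=\big(J^M(s),(as+b)\,\partial_t\big)$ with $J^M$ a Jacobi field along $\gamma_p$. I would then verify that the free boundary condition~\eqref{eq:freeboundJacobi1} for $\widetilde\gamma$ forces $a=0$ and is equivalent to~\eqref{eq:freeboundJacobi1} for $J^M$ along $\gamma_p$: the $\partial_t$-component is automatically tangent to $\partial M_\varphi$, and the shape operator of $\partial M\times\Rbb\subset M\times\Rbb$ is the pullback of that of $\partial M\subset M$ and annihilates $\partial_t$, so the normality requirement on $J'(0)+\mathcal S_{\widetilde\gamma'(0)}\big(J(0)\big)$ kills the term $a\,\partial_t$. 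Hence $\mathbb J_{\widetilde\gamma}=\mathbb J_{\gamma_p}\oplus\Rbb\,\partial_t$, and for every $s$ the evaluation map $\mathbb J_{\widetilde\gamma}\ni J\mapsto J(s)$ has the same cokernel as $\mathbb J_{\gamma_p}\ni J^M\mapsto J^M(s)$, since the extra summand $\Rbb\,\partial_t$ surjects onto the $\partial_t$-line; so the focal instants and multiplicities along $\widetilde\gamma$ coincide with those along $\gamma_p$, and the index of $M_\varphi$ equals the index $k$ of $M$. Equivalently, one checks that the index form splits orthogonally as $I_{\widetilde\gamma}=I_{\gamma_p}\oplus Q$ with $Q(f,h)=\int_0^{2L}f'h'\,\mathrm ds$ on $H^1([0,2L])$ (no endpoint constraint on $f$, since $\partial_t$ is everywhere tangent to $\partial M_\varphi$); as $Q\ge0$, $\mathrm{index}(I_{\widetilde\gamma})=\mathrm{index}(I_{\gamma_p})=k$.

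The parts about products — that geodesics, Jacobi fields, the curvature operator, and the boundary second fundamental form all split along the factors of a Riemannian product, and that the local isometry $\pi$ transports them — are standard. The one point I expect to require care, and which I regard as the crux, is the last verification: that the free boundary Jacobi condition~\eqref{eq:freeboundJacobi1} (equivalently, the boundary term of the index form) restricts correctly under the splitting, i.e.\ that it reduces to the free boundary Jacobi condition along $\gamma_p$ together with ``the $\Rbb$-component is constant''. This rests on the elementary fact that $\partial M\times\Rbb$ is totally geodesic in the $\Rbb$-direction inside $M\times\Rbb$, so its shape operator annihilates $\partial_t$.
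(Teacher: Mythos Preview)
Your proof is correct. The first part — showing that $M_\varphi$ is Zoll with connected boundary and free boundary geodesics of length $2L$ — is essentially identical to the paper's argument, lifting through the Riemannian covering $\pi$ and using the product structure of $M\times\Rbb$.

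For the equality of indices, however, you take a genuinely different route from the paper. The paper argues via the \emph{soul}: it observes that $\varphi$ preserves $\Sigma_L$, so $(\Sigma_L)_{\varphi|_{\Sigma_L}}$ makes sense, and then shows that this mapping torus is exactly the soul $\Sigma_L^\varphi$ of $M_\varphi$ (since it is the set of points at distance $L$ from $\partial M_\varphi$). Comparing the dimension formulas $\dim\Sigma_L=n-1-k$ and $\dim\Sigma_L^\varphi=n-k_\varphi$ from Proposition~\ref{SigmaL} then yields $k_\varphi=k$. Your argument instead works directly at the level of Jacobi fields and the index form: you split $\mathbb J_{\widetilde\gamma}=\mathbb J_{\gamma_p}\oplus\Rbb\,\partial_t$ (the verification that the boundary condition forces $a=0$ is correct, since the normal to $\partial M\times\Rbb$ has no $\partial_t$-component) and read off that the focal multiplicities agree, invoking Proposition~\ref{thm:maxdeg}. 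Your approach is more self-contained — it does not rely on the structural result Proposition~\ref{SigmaL} about the soul — whereas the paper's approach is shorter once that machinery is in place and ties the index back to a concrete geometric object.
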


\begin{proof}
	Given a free boundary geodesic $\gamma_p : [0,2L] \to M$ of length $2L$ and issuing from $p \in \partial M$, and a number $s \in \Rbb$, then $\tilde{\gamma}_{p,s} : [0,2L] \to M \times \Rbb$ given by $\tilde{\gamma}_{p,s}(t) = (\gamma_p(t), s)$ is a free boundary geodesic in $M \times \Rbb$. So, $\pi \circ \tilde{\gamma}_{p,s}$ is also a free boundary geodesic in $M_\varphi$ having length $2L$, since $\pi : M \times \Rbb \to M_\varphi$ is a Riemannian covering map. This shows that the mapping torus is a Zoll manifold with connected boundary whose free boundary geodesics also have length $2L$.
	
	To prove the final statement, denote by $\Sigma_L$ and $\Sigma_L^\varphi$ the souls, respectively, of $M$ and $M_\varphi$. If $\dim M = n$, then
	\begin{align*}
		\dim \Sigma_L &= n-1-k \\
		\dim \Sigma_L^\varphi &= n-k_\varphi,
	\end{align*}
	where $k$ and $k_\varphi$ are the indices of $M$ and $M_\varphi$, respectively. We claim that $\dim \Sigma_L^\varphi = \dim \Sigma_L + 1$. To verify this, first notice that $\varphi(\Sigma_L) = \Sigma_L$, since $\varphi$ is an isometry. Therefore, it makes sense to consider the mapping torus $(\Sigma_L)_{\varphi_L}$ of $\Sigma_L$, where $\varphi_L = \varphi \vert_{\Sigma_L}$. In this case, since $\pi : M \times \Rbb \to M_\varphi$ is a Riemannian covering map,
	\[\dist((\Sigma_L)_{\varphi_L},\partial M_{\varphi}) = \dist(\Sigma_L, \partial M) = L.\]
	Consequently, the fact that the free boundary geodesics of $M_\varphi$ all have length $2L$ implies $(\Sigma_L)_{\varphi_L} = \Sigma_L^\varphi$, which shows our claim. Hence,
	\[1 = \dim \Sigma_L^\varphi - \dim\Sigma_L = k - k_\varphi + 1,\]
	and then $k = k_\varphi$.
\end{proof}

\begin{corollary}\label{thm:anyindex}
	For each $n \geq 2$ and $k \in \{0, \ldots, n-1\}$, there exists an $n$-dimensional Zoll manifold with connected boundary and index $k$.
\end{corollary}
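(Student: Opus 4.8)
The plan is to build the required examples inductively on the index $k$, using the mapping torus construction of Theorem~\ref{mappingtorus} to raise the index by one at each step. The base case is $k=0$: for any $n\ge2$, one needs an $n$-dimensional Zoll manifold with connected boundary and index zero. The twisted interval bundle over a nonorientable closed $(n-1)$-manifold (for instance, the total space of the orientation line bundle of $\RPbb^{n-1}$, or more simply the Möbius-type bundle over a circle crossed with a flat torus) provides such an example: its boundary is connected by construction, and the nearest-point projection onto the zero section realizes it as a Zoll manifold of index $0$, in the spirit of the tubular neighborhood paradigm described in the introduction and of Proposition~\ref{thm:soulcodim1connboundary}. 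Alternatively, and perhaps cleanest, one takes the $n$-ball example of Remark~\ref{rem:k=n-1} to handle $k=n-1$ directly, and the Möbius-band-times-torus example for $k=0$.

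\smallskip

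Next I would carry out the inductive step. Suppose $(M,g)$ is an $n$-dimensional Zoll manifold with connected boundary and index $k$, with $0\le k\le n-2$; I want to produce an $(n+1)$-dimensional Zoll manifold with connected boundary and index $k$. Apply Theorem~\ref{mappingtorus} with $\varphi=\mathrm{id}_M$: the mapping torus $(M_{\mathrm{id}},g_{\mathrm{id}})$ is simply the Riemannian product $M\times\Sbb^1$, it has dimension $n+1$, connected boundary $\partial M\times\Sbb^1$ (by property (ii) of the mapping torus), and by Theorem~\ref{mappingtorus} it is Zoll with connected boundary, the same free-boundary-geodesic length $2L$, and the same index $k$. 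Iterating this $j$ times raises the dimension by $j$ while keeping the index fixed. Combining: starting from the $2$-dimensional index-$0$ example (Möbius band) and the $2$-dimensional index-$1$ example (disk), and more generally from the $n$-dimensional index-$0$ example and the $n$-dimensional index-$(n-1)$ example (the $n$-ball, Remark~\ref{rem:k=n-1}), one reaches every pair $(n,k)$ with $0\le k\le n-1$: given target $(n,k)$, pick a starting dimension $n_0$ with an example of index exactly $k$ (namely $n_0=\max(2,k+1)$, using the ball if $k=n_0-1$ or the index-$0$ construction if $k=0$, and for intermediate $k$ one needs a genuinely $k$-dimensional-soul example) and apply the product-with-$\Sbb^1$ step $n-n_0$ times.

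\smallskip

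The gap in the argument just sketched is that for $1\le k\le n-2$ one still needs a low-dimensional seed example of index exactly $k$; the product-with-circle trick only transports a given index upward in dimension but never changes it. The natural seed for index $k$ in dimension $k+1$ would be a Zoll manifold whose soul is a single point and whose free boundary geodesics have index $k=(k+1)-1$ — but that is exactly the $(k+1)$-ball of Remark~\ref{rem:k=n-1}, which only gives $k=n-1$, not smaller $k$ in higher $n$. So the honest construction is: for the target $(n,k)$, take as soul $\Sigma$ a closed $(n-1-k)$-manifold admitting a nontrivial $S^k$-bundle structure that arises as the boundary-of-tubular-neighborhood of an embedding of codimension $k+1$ with the Zoll property; the simplest universal choice is $\Sigma=\RPbb^{n-1-k}$ (or a point when $k=n-1$) sitting inside a suitable space so that the $(k+1)$-disk normal bundle is fiberwise round. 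I expect \textbf{this construction of a seed example of each intermediate index $k$} to be the main obstacle: one must exhibit, for every $k$, an ambient Riemannian manifold and an embedded closed submanifold of codimension $k+1$ whose normal exponential image is Zoll with connected boundary — essentially verifying the tubular-neighborhood paradigm of the introduction in full detail, checking connectedness of the boundary (which forces the normal bundle to be appropriately twisted when $k=0$ and is automatic when $k>0$, as noted after the paradigmatic example). Once such a seed is in hand for each $k$, the mapping-torus-with-identity step finishes the proof for all $n>k+1$, and the statement follows.
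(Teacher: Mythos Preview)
Your approach is essentially the paper's (the paper inducts on $n$, handling all $k$ at once; you fix $k$ and transport upward in $n$), and the ingredients you list are exactly the right ones. But you have talked yourself into a gap that is not there.

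Concretely: for $k\ge1$, the $(k{+}1)$-ball of Remark~\ref{rem:k=n-1} is a $(k{+}1)$-dimensional Zoll manifold with connected boundary and index $k$. Applying Theorem~\ref{mappingtorus} with $\varphi=\mathrm{id}$ exactly $n-k-1$ times yields an $n$-dimensional Zoll manifold with connected boundary and index $k$. For $k=0$, start from the M\"obius band (Theorem~\ref{twodimension}) and apply the mapping torus $n-2$ times. That covers every pair $(n,k)$ with $0\le k\le n-1$. Your sentence ``which only gives $k=n-1$, not smaller $k$ in higher $n$'' is the source of the confusion: the $(k{+}1)$-ball has index $k=n_0-1$ \emph{in its own dimension $n_0=k{+}1$}, and after the mapping torus the dimension has grown to $n>n_0$ while the index stayed at $k$, so in the new dimension one has $k<n-1$, exactly as desired. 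The tubular-neighborhood constructions you sketch at the end are unnecessary; the seed you dismissed is already sufficient. The paper's organization is slightly cleaner: it proves the statement for all $k$ simultaneously by induction on $n$, using the mapping torus to carry $\{0,\dots,n-1\}$ up to dimension $n+1$ and the $(n{+}1)$-ball to supply the new top index $k=n$.
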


\begin{proof}
	The statement follows from Theorem \ref{twodimension} in the case $n=2$. Suppose the result is true for fixed $n \geq 3$ and any $k \in \{0, \ldots, n-1\}$, i.e., there exists an $n$-dimensional Zoll manifold $M$ with connected boundary and index $k$. Theorem \ref{mappingtorus} implies that, for any value $k_\varphi \in \{0, \ldots, n-1\}$, one can find a mapping torus $M_\varphi$ of index $k_\varphi = k$. Since $\dim M_\varphi = \dim M + 1$, the original assertion follows by induction up to $k_\varphi=n-1$. To conclude the case $k_\varphi = n$, notice that an embedded geodesic ball of an $(n+1)$-dimensional Riemannian manifold is a Zoll manifold with connected boundary and has index $n$.
\end{proof}

\begin{acknowledgements}
We are grateful to Lucas Ambrozio and Daniel Tausk for insightful comments and suggestions. E. Longa was partially supported by S\~ao Paulo Research Foundation grant 2024/01663-8. P. Piccione was partially supported by S\~ao Paulo Research Foundation grant 2022/16097-2 ``Modern Methods in Differential Geometry and Geometric Analysis". R. Santos was partially supported by S\~ao Paulo Research Foundation grant 2023/14796-3 and by National Council for Scientific and Technological Development grant 409513/2023-7.
\end{acknowledgements}

\section*{Declarations}

No datasets were generated or analysed during the current study.

The authors declare no competing interests.

\bibliography{references}

\end{document}